\def\barB{\overline{B}}
\def\GP{\mathcal{GP}}
\def\A{\mathcal{A}}
\def\H{\mathcal{H}}
\def\M{\mathcal{M}}
\def\C{\mathbb{C}}
\def\Q{\mathbb{Q}}
\def\N{\mathbb{N}}
\def\R{\mathbb{R}}
\def\supp{{\rm supp}}
\DeclareFontFamily{U}{mathx}{\hyphenchar\font45}
\DeclareFontShape{U}{mathx}{m}{n}{
      <5> <6> <7> <8> <9> <10>
      <10.95> <12> <14.4> <17.28> <20.74> <24.88>
      mathx10
      }{}
\DeclareSymbolFont{mathx}{U}{mathx}{m}{n}
\DeclareMathAccent{\widecheck}{0}{mathx}{"71}
\def\Ext{\mbox{\rm Ext}_\R}
\newtheorem{thm}{Theorem}[section]
\newtheorem{lem}[thm]{Lemma}
\newtheorem{prop}[thm]{Proposition}
\newtheorem{exa}[thm]{Example}
\theoremstyle{definition}
\newtheorem{rem}[thm]{Remark}
\newtheorem{notation}[thm]{Notation}
\newcommand{\tc}[2]{{\pi_{#2}({#1})}}
\newcommand{\tcm}[2]{{\pi^{#2}({#1})}}
\renewcommand{\emph}{\textbf}
\title{New insights into Gleason parts for an algebra of holomorphic functions}
\author[Daniel Carando]{Daniel Carando$^1$ }
\address{$^1$ \textit{Corresponding author}. Departamento de Matem\'{a}tica, Facultad de Ciencias Exactas y Naturales, Universidad de Buenos Aires, (1428) Buenos Aires,
Argentina and IMAS (UBA - CONICET). ORCID 0000-0002-5519-8697} \email{dcarando@dm.uba.ar}  
\author[Ver\'onica Dimant]{Ver\'onica Dimant$^2$}
\address{$^2$Departamento de Matem\'{a}tica y Ciencias, Universidad de San
Andr\'{e}s, Vito Dumas 284, (B1644BID) Victoria, Buenos Aires,
Argentina and CONICET. ORCID 0000-0003-2277-0551} \email{vero@udesa.edu.ar}
\author[Tom\'as Rodr\'{i}guez]{Jorge Tom\'as Rodr\'{i}guez$^3$}
\address{$^3$Departamento de Matem\'{a}tica and NUCOMPA, Facultad de Cs. Exactas, Universidad Nacional del Centro de la Provincia de Buenos Aires, (7000) Tandil, Argentina and CONICET. ORCID 0000-0003-4693-2498}
\email{jtrodriguez@nucompa.exa.unicen.edu.ar}
\begin{document}

\begin{abstract}
    We study the structure of the spectrum of the algebra of uniformly continuous holomorphic functions on the unit ball of $\ell_p$. Our main focus is the relationship between \emph{Gleason parts} and \emph{fibers}. For every $z \in B_{\ell_p}$ with $1 < p < \infty$, we prove that the fiber over $z$  contains $2^{\mathfrak{c}}$ distinct Gleason parts. We also investigate some of the properties of these Gleason parts and show the existence of many strong boundary points in certain fibers. We then examine the case $p = 1$, where similar results on the abundance of Gleason parts within the fibers hold, although the arguments required are more involved. Our results extend and complete earlier work on the subject, providing answers to previously posed questions. 
\end{abstract}

\subjclass{46J15, 46E50 46G20}
\keywords{Bounded analytic functions, spectrum, Gleason parts, Fibers}

\maketitle

\delimiterfactor=500
\delimitershortfall=25pt

\section{Introduction}

The thorough study of the maximal ideal space (or spectrum) of an algebra of holomorphic functions has its roots in the seminal work of I. J. Schark \cite{schark1961maximal}. There, the idea of restricting each homomorphism to the underlying space was proposed, leading to the useful notion of \emph{fiber}. Earlier, a relevant tool for understanding the analytic structure of the spectrum of a uniform algebra was the concept of \emph{part}, introduced by Gleason \cite{gleason1957} and much developed since then. Fibers and Gleason parts thus provide two different partitions of the spectrum of a holomorphic algebra, and our interest is to clarify their interaction.

The focus on algebras of holomorphic functions over \emph{infinite}-dimensional spaces began with the foundational work of Aron, Cole and Gamelin \cite{aron1991spectra}, inducing a large number of interesting results where fibers play a crucial role (see, for instance, \cite{aron2016cluster, aron2018analytic, aron1996regularity, CaGaMaSeSpectra, cole1992analytic, farmer1998fibers}). Gleason parts for these types of algebras (and their relationship to fibers) were considered much later \cite{aron2020gleason,DimLasMae} and it is our intention here to continue this path for the spectrum of the algebra of uniformly continuous holomorphic functions on the unit ball of $\ell_p$ ($1 \leq p < \infty$). To precisely describe our goals, we need first to introduce some notation and definitions.

Let $X$ be a complex Banach space and \( \A_u(B_X) \) denote the Banach algebra of holomorphic and uniformly continuous functions on its open unit ball $B_X$, endowed with the supremum norm. Our object of study is the spectrum of this algebra, \( \M(\A_u(B_X)) \), which consists of all non-zero multiplicative linear functionals on \( \A_u(B_X) \). Its structure is intricately connected to the geometry of \( X \) and the properties of \( \A_u(B_X) \). 
The algebra \( \A_u(B_X) \) clearly contains $X'$, the dual of $X$. Thus, for each $z\in\overline B_{X''}$ (the closed unit ball of the bidual of $X$) we can define the fiber over $z$, denoted by \( \M_z \), as the set of all homomorphisms $\varphi\in \M(\A_u(B_X))$ such that $\varphi|_{X'}=z$. For $X=c_0$, it is known that each fiber is a singleton, while for $X=\ell_p$ fibers over interior points are thick (i.e., non singleton); and it is even so for some edge fibers in the case $p=1$.

The article \cite{aron2020gleason} presents some general results on the interaction between fibers and Gleason parts for the maximal ideal space of the algebras \( \A_u(B_X) \) and $\mathcal H^\infty(B_X)$ (bounded holomorphic functions on $B_X$), with a particular focus on the case $X=c_0$. The subsequent work \cite{DimLasMae} faces the same type of study for \( \M(\A_u(B_{\ell_p})) \),  revealing some interesting insights about the Gleason-fiber structure but also leaving many relevant open questions. 
In this article, we continue with this line, trying to solve (or produce significant progress toward) these open problems. Loosely speaking, one question that guides our work is the following: do thick fibers intersect (or contain) many Gleason parts? We are particularly interested in finding fibers that intersect as many Gleason parts as possible (as we shall see, this means $2^{\mathfrak c}$ Gleason parts). It is worth noting that although our focus is on \( \A_u(B_X) \), our two main results, Theorems \ref{thm:resultado-ppal-ellp} and \ref{thm:PG-puntos-interiores-ele-1}, can be extended to \( \mathcal{H}^\infty(B_X) \) through the natural projection between their spectra (see, for instance, \cite[Sect. 3]{aron2020gleason}).

The paper is organised as follows. In Section 2, we introduce the basic definitions, recall some known facts and establish several preliminary results. Section 3 is devoted to the study of the space $\A_u(B_{\ell_p})$ for $1 < p < \infty$. We show in Theorem \ref{thm:resultado-ppal-ellp} that, for any $z\in B_{\ell_p}$, the fiber $\M_z$ reaches \( 2^{\mathfrak{c}} \) different Gleason parts, extending the results obtained in \cite{DimLasMae} for integer values of $p$. Additionally, other properties of these Gleason parts are exhibited, and the existence of strong boundary points (see definition in Section \ref{subs: sbp}) in some fibers is proved, moving one step forward to the answer of another open question from \cite{DimLasMae}. In the final section, we address the case $p = 1$, showing the existence of \( 2^{\mathfrak{c}} \) different Gleason parts in $\M_z$, for each $z\in B_{\ell_1''}$ (see Theorem \ref{thm:PG-puntos-interiores-ele-1}). Although the statement is analogous to that of the previous section, we need to develop more involved arguments to prove it. Finally, we present in Proposition \ref{prop:enelborde} and Example \ref{ex-promedio} a subset of the unit sphere of $\ell_1''$ for which the corresponding fibers intersect different Gleason parts.



\section{Preliminaries and general results}

For an infinite-dimensional complex Banach space $X$ we denote by $X'$ and $X''$  its dual and bidual and use classical notations   $B_X$ for its open unit ball, $S_X$ for its unit sphere and $\overline B_X$ for its closed unit ball. 
A function $P:X\to \C$ is a \emph{continuous $m$-homogeneous polynomial} if there exists a (unique) continuous symmetric $m$-linear mapping $\widecheck{P}$ such that $P(x) = \widecheck{P}(x,\dots, x)$. We write $\mathcal{P}(^mX)$ for the space of continuous  $m-$homogeneous polynomials. 

Given an open set $U\subseteq X$, a mapping $f:U\to \C$ is said to be \emph{holomorphic} if for every $x_0\in U$ there exists a sequence $(P_mf(x_0))$, with each $P_mf(x_0)$ a continuous $m$-homogeneous polynomial, such that the series
\[
f(x)=\sum_{m=0}^\infty P_mf(x_0)(x-x_0)
\] converges uniformly in some neighbourhood of $x_0$ contained in $U$.
The space
\[
\A_u(B_X)=\{f:B_X\to\mathbb C:\  f \textrm{ is a uniformly continuous holomorphic function}\}
\] is a Banach algebra when endowed with the norm $\|f\|=\sup_{x\in B_X}|f(x)|$. Note that each function $f\in \A_u(B_X)$ extends continuously to $\overline B_X$.
The \emph{maximal ideal space} or \emph{spectrum}  of this algebra is the set
$$
\M(\A_u(B_X))=\{\varphi\in \A_u(B_X)':\ \varphi \textrm{ is multiplicative}\}\setminus\{0\}.
$$

In the spectrum $\M(\A_u(B_X))$, which is a subset of the unit sphere of $\A_u(B_X)'$, we consider the $w^*$-topology inherited from $\A_u(B_X)'$. With this topology, $\M(\A_u(B_X))$ is a compact set.

A typical strategy to produce large sets within the spectrum is via interpolating sequences. A sequence $\{\varphi_j\}\subset \M(\A_u(B_X))$ is \emph{interpolating} if given  $(\alpha_j)\in\ell_\infty$ there is $f\in \A_u(B_X)$ such that $\varphi_j(f)=\alpha_j$ for all $j$. When $\{\varphi_j\}$ is interpolating, its $w^*$-closure is identified with $\beta(\N)$, the Stone-\v{C}ech compactification of $\N$. Recall that the cardinality of  $\beta(\N)$ is $2^\mathfrak c$. Note also that for separable $X$, $\A_u(B_X)$ has cardinality $\mathfrak{c}$ (it is a subset of the set of all complex-valued continuous functions on $B_X$) and, then, the cardinality of the spectrum $\M(\A_u(B_X))$ is at most  $2^\mathfrak c$. Therefore, subsets built from interpolating sequences have maximal cardinality. 

Now we recall the definitions and basic known results about our target structures: fibers and Gleason parts. 

\subsection*{\textsc{Fibers}} Since the dual $X'$ is naturally embedded in the Banach algebra $\A_u(B_X)$, we can define a surjective linear map $\pi: \M(\A_u(B_X)) \to \barB_{X''}$ by $\pi(\varphi) = \varphi|_{X'}$. 
Then, the \emph{fiber} of $\M(\A_u(B_X))$ corresponding to a point $z \in \barB_{X''}$ (or the \emph{fiber over $z$}) is the subset $\M_z\subset \M(\A_u(B_X)) $ given by
\begin{equation*}
\M_z = \{\varphi \in \M(\A_u(B_X)) : \pi(\varphi) = z\} = \pi^{-1}(z).
\end{equation*}
For each $z\in \overline B_{X''}$, there is a distinguished element in the fiber $\M_z$, the \emph{evaluation homomorphism} $\delta_z$. This mapping  is defined as $\delta_z(f)=f(z)$ if $z\in X$ and $\delta_z(f)=\widetilde f(z)$ if $z\in X''$, where $\widetilde f$ is the canonical extension to the bidual (see \cite{AroBer78}). 
With this, we obtain the following commutative diagram, where $I_{X''}$ denotes the identity on $X''$:
$$
\xymatrix{
\overline B_{X''} \ar[r]^{\hspace{-.5cm}\delta\,\,\,} \ar[rd]_{I_{X''}}  & \M(\A_u(B_X))\ar[d]^{\pi}\\
& \barB_{X''}.\\
}
$$

\subsection*{\textsc{Gleason parts}} Another distinguished structure inside the spectrum is the concept of \emph{Gleason part}. First, recall the \emph{Gleason distance}, which is just the metric on the spectrum inherited from $\A_u(B_X)'$. That is,
for $\varphi, \psi \in \M(\A_u(B_X))$ the distance between them is
$$
\|\varphi - \psi\| = \sup\{|\varphi(f) - \psi(f)| : \|f\| \leq 1\}.
$$
Clearly, $\|\varphi - \psi\| \leq 2$ for any pair $\varphi, \psi$. Moreover, the condition $\|\varphi - \psi\| < 2$ defines an equivalence relation on $\M(\A_u(B_X))$, whose equivalence classes are referred to as Gleason parts. Thus, for $\varphi\in \M(\A_u(B_X))$, the Gleason part containing $\varphi$ is the set
\begin{equation*}
\GP(\varphi) = \{\psi \in \M(\A_u(B_X)) : \|\varphi - \psi\| < 2\}.
\end{equation*}

The \emph{pseudo-hyperbolic distance} $\rho$ is often employed to determine whether two elements belong to the same Gleason part. This is defined as
$$
\rho(\varphi, \psi) = \sup\{|\varphi(f)| : f \in \A_u(B_X), \|f\| \leq 1, \psi(f) = 0\}.
$$
It is a well-known result that $\|\varphi - \psi\| < 2$  if and only if $\rho(\varphi, \psi) < 1$. Therefore, the Gleason part of $\varphi$ can also be expressed as
\begin{equation*}
\GP(\varphi) = \{\psi \in \M(\A_u(B_X)) : \rho(\varphi, \psi) < 1\}.
\end{equation*}

We refer the reader to \cite{dineen2012complex} for the general theory of holomorphic functions on infinite-dimensional spaces, to \cite{stout1971theory}, \cite{gamelin2005uniform} or \cite{garnett2006bounded} for classical texts on uniform algebras and to \cite{CaGaMaSeSpectra} for a survey on spectra of algebras of holomorphic functions on Banach spaces.

\subsection*{\textsc{General results}} 
The following fact is useful in deciding whether two  elements belong to the same Gleason part or not:
two homomorphisms that share the Gleason part must approach $1$ in the same sequences of elements of the ball of $\A_u(B_X)$. More precisely, from \cite[Thm. 3.9]{stout1971theory} (see also \cite[Lem. 2.1]{DimLasMae}), we know that for $\varphi, \psi \in \M(\A_u(B_X))$, the equality $\GP(\varphi) = \GP(\psi)$ is equivalent to the following: if $\varphi(f_j) \to 1$ for some sequence $(f_j)_j \subset \overline{B}_{\A_u(B_X)}$, then $\psi(f_j) \to 1$. We will make frequent use of this result throughout the article.

\begin{rem}\label{rmk: PG-fibras}
   As mentioned above, fibers and Gleason parts are two different ways of partitioning the spectrum. Two basic aspects regarding their relationship are the following \cite[Prop. 1.1]{aron2020gleason}:
    \begin{itemize}
        \item $\GP(\delta_0)=\GP(\delta_z)$ for all $z\in B_{X''}$.
        \item $\GP(\varphi)\not=\GP(\psi)$ whenever $\varphi\in\M_z$, $\psi\in\M_\omega$ 
         and $z \in B_{X''}$, $\omega\in S_{X''}$.
    \end{itemize}
\end{rem}

The spectrum of $\A_u(B_X)$ inherits some properties through its connection with the spectrum of $\H_b(X)$, the Fr\'echet algebra of entire functions that are bounded on bounded subsets of $X$ endowed with the topology of uniform convergence on these bounded sets (see \cite[Sect. 6.3]{dineen2012complex}). The relationship between $\M(\H_b(X))$ and $\M(\A_u(B_X))$ is given via the \emph{radius function} \cite{aron2016cluster, aron1991spectra}, which for $\varphi \in \M(\H_b(X))$
is defined as
\begin{equation} \label{eq-radio}
R(\varphi)=\inf\big\{r>0\,\colon \, |\varphi(f)|\le \|f\|_{rB_X} \text{ for every } f \in \H_b(X)\big\}
\end{equation} where $\|f\|_{rB_X}=\sup\{|f(x)|\colon \|x\|<r\}$. From \cite[Lem.~3.1 and  3.2]{aron1991spectra} we know  that $R(\delta_z)=\|z\|$ for all $z\in\barB_{X''}$  and that $\|\varphi|_{X'}\| \le R(\varphi)$.

The aforementioned  connection between the spectra of $\H_b(X)$ and $\A_u(B_X)$ is given by the mapping
\begin{eqnarray*}
\Phi\colon \M(\A_u(B_X)) &\to &\big\{\varphi\in \mathcal{M}(\H_b(X)),\, R(\varphi)\leq 1\big\}\\
\varphi &\mapsto & [f\mapsto \varphi(f|_{B_X})],
\end{eqnarray*}
which is an onto homeomorphism (see  
\cite[Lem.~1.2]{aron2016cluster} or \cite[Thm.~2.5]{aron1991spectra}). This ensures that no misunderstandings arise when  defining  $R(\varphi)$ for $\varphi\in \M(\A_u(B_X))$. Moreover, since $\H_b(X)$ is dense in $\A_u(B_X)$,  for $\varphi\in \M(\A_u(B_X))$ we can compute $R(\varphi)$ using  \eqref{eq-radio} with $f$  in $\A_u(B_X)$ instead of $\H_b(X)$.

Gleason parts containing only one element are known as \emph{trivial} or \emph{singleton parts}, while Gleason parts with more than one element are called \emph{thick}. 
The study of Gleason parts for the spectrum of $\A_u(B_X)$ in \cite{aron2020gleason,DimLasMae} shows that $\GP(\delta_0)$ is thick and can contain not only all $\delta_z$ with $\|z\|<1$ but also copies of a ball inside the fiber of 0 (or other fibers). One reason for this behaviour is exposed in the following interaction between $\GP(\delta_0)$ and the radius function.

\begin{lem}\label{lem:radio menor a 1}
    Let $\varphi\in \M(\A_u(B_X))$ with $R(\varphi)<1$. Then, $\varphi\in\GP(\delta_0)$.
\end{lem}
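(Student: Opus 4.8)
The plan is to bound the pseudo-hyperbolic distance $\rho(\varphi,\delta_0)$ directly and show it is strictly smaller than $1$; by the equivalence recalled before Remark \ref{rmk: PG-fibras} this gives $\varphi\in\GP(\delta_0)$. Write $r:=R(\varphi)<1$ and fix $\eps>0$ with $r+\eps<1$. Since, as observed in the excerpt, the infimum defining $R(\varphi)$ in \eqref{eq-radio} may be computed with test functions in $\A_u(B_X)$, we have
\[
|\varphi(f)|\le \|f\|_{(r+\eps)B_X}\qquad\text{for every } f\in\A_u(B_X).
\]

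The key step is an infinite-dimensional Schwarz lemma obtained by slicing: I claim that if $f\in\overline B_{\A_u(B_X)}$ satisfies $\delta_0(f)=f(0)=0$, then $\|f\|_{sB_X}\le s$ for all $0<s<1$. Indeed, given $x\in B_X$, $x\neq 0$, put $u:=x/\|x\|\in S_X$ and consider $h\colon\overline{\D}\to\C$, $h(\lambda)=f(\lambda u)$; this is well defined and continuous because every $f\in\A_u(B_X)$ extends continuously to $\overline B_X$, it is holomorphic on $\D$, it satisfies $\|h\|_{\overline\D}\le\|f\|\le 1$, and $h(0)=f(0)=0$. The classical Schwarz lemma then yields $|h(\lambda)|\le|\lambda|$, so $|f(x)|=|h(\|x\|)|\le\|x\|\le s$, proving the claim.

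Combining the two, $|\varphi(f)|\le\|f\|_{(r+\eps)B_X}\le r+\eps$ for every $f\in\overline B_{\A_u(B_X)}$ with $f(0)=0$, hence $\rho(\varphi,\delta_0)\le r+\eps$. Letting $\eps\to0$ gives $\rho(\varphi,\delta_0)\le r=R(\varphi)<1$, and therefore $\varphi$ and $\delta_0$ belong to the same Gleason part, i.e. $\varphi\in\GP(\delta_0)$. I do not expect a serious obstacle here: the only points needing a little care are that the slicing argument uses the continuous extension of elements of $\A_u(B_X)$ to $\overline B_X$ (so the one-variable restriction $h$ is holomorphic on $\D$ and continuous on $\overline\D$), and that ``lying in the same Gleason part'' is a symmetric relation, so estimating $\rho(\varphi,\delta_0)$ rather than $\rho(\delta_0,\varphi)$ is enough to conclude.
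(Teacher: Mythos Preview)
Your proof is correct and follows essentially the same approach as the paper: both bound $\rho(\varphi,\delta_0)$ by $R(\varphi)$ using the Schwarz lemma applied to one-variable slices. The only cosmetic difference is that the paper invokes \cite[Lem.~2.1]{aron1991spectra} to get $|\varphi(f)|\le\|f\|_{R(\varphi)B_X}$ directly (avoiding your $\varepsilon$-argument), and it leaves the slicing step implicit where you spell it out.
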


\begin{proof}
We know that $|\varphi(f)|\le \|f\|_{R(\varphi)B_X}$ \cite[Lem. 2.1]{aron1991spectra} (actually, the referred lemma shows the inequality for $f\in \H_b(X)$ but the same argument holds for  $f\in\A_u(B_X)$).
Then, by Schwarz lemma,
$$
\rho(\varphi, \delta_0) =
\sup\{|\varphi(f)|:\  f \in \A_u(B_X), \|f\| \leq 1, f(0) = 0 \}\le R(\varphi) <1.
$$
Thus,  $\varphi\in\GP(\delta_0)$.
\end{proof}

We do not know whether the converse of the previous result holds. 

The following lemma will be used in the sequel to deal with homomorphisms that are limits of evaluations, allowing us to modify the net of evaluations to satisfy particular requirements.  Since it will be applied only in the reflexive case, we state and prove it for elements in  $X$, though it extends  naturally to the bidual.

\begin{lem}\label{lem:redes_cerca}
Let $\{z_\alpha\}_\alpha$ and $\{\widetilde z_\alpha\}_\alpha$ be two nets in $  \overline{B}_X$  such that $$\|z_\alpha-\widetilde z_\alpha\|\underset{\alpha}{\longrightarrow} 0.$$ If $(\delta_{z_\alpha})$ $w^*$-converges to $ \varphi \in \M(\A_u(B_{X}))$, then we also have $\varphi = w^*-\lim\delta_{\widetilde z_\alpha} $.
\end{lem}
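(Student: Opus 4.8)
The plan is to use the one structural feature of $\A_u(B_X)$ that makes the statement almost automatic: every $f\in\A_u(B_X)$ is uniformly continuous on $B_X$, hence extends to a uniformly continuous function on $\overline{B}_X$, so the evaluations $\delta_{z_\alpha}$, $\delta_{\widetilde z_\alpha}$ are legitimate elements of $\M(\A_u(B_X))$ and the differences $\delta_{z_\alpha}(f)-\delta_{\widetilde z_\alpha}(f)$ are controlled by $\|z_\alpha-\widetilde z_\alpha\|$ with a modulus depending only on $f$. So the whole argument reduces to a pointwise ($w^*$) comparison.

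Concretely, I would fix $f\in\A_u(B_X)$ and $\vep>0$. By uniform continuity of $f$ on $\overline B_X$, there is $\delta>0$ such that $\|x-y\|\le\delta$ with $x,y\in\overline B_X$ forces $|f(x)-f(y)|\le\vep$. Since $\|z_\alpha-\widetilde z_\alpha\|\to 0$, pick $\alpha_0$ with $\|z_\alpha-\widetilde z_\alpha\|\le\delta$ for all $\alpha\ge\alpha_0$; then $|\delta_{z_\alpha}(f)-\delta_{\widetilde z_\alpha}(f)|=|f(z_\alpha)-f(\widetilde z_\alpha)|\le\vep$ for $\alpha\ge\alpha_0$. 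As $\vep$ was arbitrary this gives $\delta_{z_\alpha}(f)-\delta_{\widetilde z_\alpha}(f)\to 0$. Now, $\delta_{z_\alpha}\overset{w^*}{\to}\varphi$ means exactly $\delta_{z_\alpha}(f)\to\varphi(f)$ for every $f$, so $\delta_{\widetilde z_\alpha}(f)=\delta_{z_\alpha}(f)-\bigl(\delta_{z_\alpha}(f)-\delta_{\widetilde z_\alpha}(f)\bigr)\to\varphi(f)$. Since $f\in\A_u(B_X)$ was arbitrary, $\delta_{\widetilde z_\alpha}\overset{w^*}{\to}\varphi$, which is the assertion; multiplicativity of $\varphi$ needs no separate check, being inherited as a $w^*$-limit of homomorphisms (and it is part of the hypothesis anyway).

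I do not expect a real obstacle here. The only point deserving a word is the passage from ``uniformly continuous on the open ball $B_X$'' to ``uniformly continuous on the closed ball $\overline B_X$'', which is the standard fact that a uniformly continuous map on a metric space extends uniquely and uniformly continuously to its completion (here $\overline B_X=\overline{B_X}$); this is also the reason the evaluations at boundary points are well defined. Note that no compactness, reflexivity or finite-dimensionality enters, which is consistent with the remark in the statement that the lemma extends verbatim to nets in $\overline B_{X''}$.
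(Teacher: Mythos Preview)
Your proof is correct and follows essentially the same approach as the paper: fix $f\in\A_u(B_X)$, use uniform continuity to get $|f(z_\alpha)-f(\widetilde z_\alpha)|\to 0$, and combine with $f(z_\alpha)\to\varphi(f)$ to conclude $f(\widetilde z_\alpha)\to\varphi(f)$. The paper's version is just more terse, omitting the explicit $\varepsilon$-$\delta$ bookkeeping and the remark on the extension to $\overline B_X$ (which it has already noted earlier in the preliminaries).
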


\begin{proof}
    Given $f\in \A_u(B_{X})$, the uniform continuity of $f$ implies that $|f(z_\alpha)-f(\widetilde z_\alpha)|\underset{\alpha}{\longrightarrow} 0$. On the other hand, we know that $f(z_\alpha) \to \varphi(f)$ and this altogether gives $f(\widetilde z_\alpha) \to \varphi(f)$. Given that $f$ is arbitrary, we have $\varphi = w^*-\lim\delta_{\widetilde z_\alpha} $.
\end{proof}

{Recall that \cite[Prop. 1.2]{aron2020gleason}  says that if $\varphi=w^*-\lim \delta_{z_\alpha}$ with $\|z_\alpha\|\le r<1$, then $\varphi\in\GP(\delta_0)$. Since such a homomorphism $\varphi$ satisfies $R(\varphi)\le r$,  the conclusion can also be obtained as a consequence of our Lemma \ref{lem:radio menor a 1}. In light of this and Lemma \ref{lem:redes_cerca}, we obtain the following: if $\varphi=w^*-\lim \delta_{z_\alpha}$ with $\{z_\alpha\}_\alpha\subset  \overline{B}_X$ and $\varphi\not\in\GP(\delta_0)$ then there exists a net  $\{\widetilde z_\alpha\}_\alpha\subset S_X$ such that $\varphi=w^*-\lim \delta_{\widetilde z_\alpha}$. }

We end this section with some elementary results about continuous \( k \)-homogeneous polynomials that will be useful later on. The first is the observation that every \( k \)-homogeneous polynomial is Lipschitz on bounded sets. Moreover, for each \( k \in \mathbb{N} \), there exists a constant \( L(k) > 0 \), depending only on \( k \), such that the restriction of any norm-one continuous \( k \)-homogeneous polynomial \( P \) to the ball of radius \( r \) is Lipschitz with constant \( r^k L(k) \). That is,
\begin{equation}\label{eqn:Lipschitz}
    |P(x)-P(y)| \le r^k L(k) \|x-y\| \quad \text{for all }  x,y\in rB_X,\ P\in\mathcal P(^kX) \text{ with }\|P\|=1.
 \end{equation}
The second result concerning polynomials is \cite[Lem.~2.3]{carando2013lower}. For the reader's convenience, we state it below in the context of $\ell_p$ spaces.
\begin{lem}\label{lem:Pinasco}
Let \( 1 \leq p < \infty \), and let \( R, Q : \ell_p \to \mathbb{C} \) be homogeneous polynomials of degrees \( l \) and \( k \), respectively. Suppose that the sets of coordinates on which \( R \) and \( Q \) depend are disjoint. Then, the product \( RQ \) satisfies
\[
\|RQ\| = \left( \frac{l^l\, k^k }{(l+k)^{l+k}} \right)^{1/p} \|R\|\, \|Q\|.
\]
\end{lem}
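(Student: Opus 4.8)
The plan is to reduce the statement to an elementary constrained optimization. Since $R$ depends only on the coordinates in some set $A$ and $Q$ only on those in a disjoint set $B$, I would decompose $\ell_p=\ell_p(A)\oplus_p\ell_p(B)\oplus_p\ell_p(C)$, where $C$ collects the remaining coordinates, and write a generic point as $x=(u,v,w)$. Because $R(x)Q(x)=R(u)Q(v)$ does not depend on $w$, and likewise $\|R\|=\sup_{\|u\|\le 1}|R(u)|$ and $\|Q\|=\sup_{\|v\|\le 1}|Q(v)|$ with $u$ and $v$ ranging over the respective coordinate blocks, the problem becomes the computation of
\[
\|RQ\|=\sup\big\{\,|R(u)|\,|Q(v)| : \|u\|^p+\|v\|^p\le 1\,\big\}.
\]
(If $\|R\|=0$ or $\|Q\|=0$ the identity is trivial, so assume both are positive.)

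For the inequality $\le$, I would use homogeneity: whenever $\|u\|^p+\|v\|^p\le 1$ one has $|R(u)|\,|Q(v)|\le \|R\|\,\|Q\|\,\|u\|^l\|v\|^k$, so it suffices to bound $s^l t^k$ over $s,t\ge 0$ with $s^p+t^p=1$. Setting $a=s^p$, $b=t^p$, this amounts to maximizing $a^{l/p}b^{k/p}$ subject to $a+b=1$; a one-variable calculus argument (equivalently, weighted AM-GM) gives the maximizer $a=\tfrac{l}{l+k}$, $b=\tfrac{k}{l+k}$ and the maximal value $\big(\tfrac{l^l k^k}{(l+k)^{l+k}}\big)^{1/p}$. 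This yields $\|RQ\|\le\big(\tfrac{l^l k^k}{(l+k)^{l+k}}\big)^{1/p}\|R\|\,\|Q\|$.

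For the reverse inequality, I would pick near-optimal unit vectors: sequences $u_n\in S_{\ell_p(A)}$ and $v_n\in S_{\ell_p(B)}$ with $|R(u_n)|\to\|R\|$ and $|Q(v_n)|\to\|Q\|$ (normalizing via homogeneity if the approximating vectors do not have norm one). Then, taking $x_n=(s\,u_n,\,t\,v_n,0)$ with $s=\big(\tfrac{l}{l+k}\big)^{1/p}$ and $t=\big(\tfrac{k}{l+k}\big)^{1/p}$, so that $\|x_n\|=1$, homogeneity gives $|R(x_n)Q(x_n)|=s^l t^k\,|R(u_n)|\,|Q(v_n)|\to \big(\tfrac{l^l k^k}{(l+k)^{l+k}}\big)^{1/p}\|R\|\,\|Q\|$. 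Hence $\|RQ\|$ is at least this quantity, and combining the two bounds gives the equality.

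This argument is essentially routine; the only points requiring a little care are the reduction to the two coordinate blocks (checking that the coordinates in $C$ may be taken to be zero and that the norms of $R$ and $Q$ are unchanged when they are viewed as polynomials on the whole of $\ell_p$) and the fact that $\|R\|$ and $\|Q\|$ need not be attained, so that one must work with approximating unit vectors and let the approximation error tend to $0$. I do not expect any genuine obstacle beyond this bookkeeping.
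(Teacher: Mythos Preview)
Your argument is correct. The paper itself does not prove this lemma; it simply quotes the result from \cite[Lem.~2.3]{carando2013lower} for the reader's convenience, so there is no in-paper proof to compare against. Your reduction to the elementary optimization of $s^l t^k$ under the constraint $s^p+t^p=1$, together with the approximation by near-maximizing unit vectors for the reverse inequality, is exactly the standard route and matches the spirit of the cited reference.
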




\section{The case $1<p<\infty$}

Since \(\ell_p\) is uniformly convex  (\(1 < p < \infty\)), we know from \cite[Lem. 2.4]{aron2012cluster} that for every \(z \in S_{\ell_p}\), the fiber \(\M_z\) in the spectrum \(\M(\A_u(B_{\ell_p}))\) is the singleton \(\{\delta_z\}\). Moreover, by \cite[Prop. 4.1]{farmer1998fibers}, the Gleason part of \(\delta_z\) is also trivial, that is \(\M_z = \{\delta_z\} = \GP(\delta_z)\) (see also \cite[Prop. 3.1]{DimLasMae}).
 On the other hand, fibers over points $z\in B_{\ell_p}$ are thick, as shown for instance in \cite[Thm. 3.1]{aron2018analytic}. Thus, the search of several Gleason parts intersecting the same fiber makes sense for inner points. In this direction, in \cite[Thm. 3.3]{DimLasMae} is proved that the fiber   over 0 intersects $2^{\mathfrak c}$ different Gleason parts. 
 For non-zero vectors  $z\in B_{\ell_p}$, the corresponding result is obtained under the additional assumption that $p$ is an integer \cite[Prop. 3.5]{DimLasMae}. A natural question, posed as Open problem 4 in the same article, is whether this also holds when $p$ is not an integer. The main goal of this section is to answer this question affirmatively. For this, we use a different approach that focuses not on the value of $p$, but rather on the norm of $z$. Moreover, our construction provides additional structural insight. We demonstrate that each fiber \emph{contains}, rather than merely intersects,  $2^{\mathfrak c}$ different  Gleason parts, as stated in the main theorem of this section.

Throughout this and the following sections, we frequently work with  truncations of sequences, so we now fix a notation for this.
\begin{notation}\label{not:truncados}
    Given a sequence $z=(z(i))_{i\in \N}$, we write $\tc{z}{m}$ for the \emph{$m$-truncation of $z$} and $\tcm{z}{m}=z-\tc{z}{m}$ for the \emph{$m$-tail of $z$}. That is, 
    $$\tc{z}{m}=(z(1),\ldots, z(m), 0, 0,\ldots );$$
    $$\tcm{z}{m}=(0,\ldots, 0, z(m+1), z(m+2),\ldots ).$$
\end{notation}
Also, for a net $(\varphi_\alpha)_{\alpha\in I}\subset \M(\A_u(B_X))$, we write $$w^*-ac \{\varphi_\alpha : \alpha \in I \}$$ for the set of all  $w^{*}$-accumulation points of $(\varphi_\alpha)_{\alpha\in I}$.

With these notations at hand, we can state our first main result. 

\begin{thm}\label{thm:resultado-ppal-ellp}
Let $1<p<\infty$ and  $z\in B_{\ell_p}$. Then, there are $2^{\mathfrak c}$ different Gleason parts of $\M(\A_u(B_{\ell_p}))$ completely contained in  $\M_z$. 
More precisely, denoting $s=\left(1-\|z\|^p\right)^{1/p}$, the cardinality of 
$$w^*-ac\{ \delta_{\tc{z}{n}+s e_{n+1}} : n\in \mathbb{N}\}\subset \M_z$$
is $2^{\mathfrak c}$ and each $\varphi$ in this set belongs to a different Gleason part. Moreover,   $\GP(\varphi)\subset \M_z$ for every such $\varphi$.
\end{thm}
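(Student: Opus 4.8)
The plan is to work with the sequence of evaluations $\delta_{w_n}$ where $w_n = \tc{z}{n} + s\, e_{n+1}$, noting first that each $w_n \in S_{\ell_p}$ since $\|\tc{z}{n}\|^p + s^p \to \|z\|^p + s^p = 1$; actually one should be slightly careful here, as $\|\tc{z}{n}\| < \|z\|$ strictly, so $w_n$ has norm slightly below $1$, but $\|w_n\| \to 1$ and $w_n \to z$ weakly (each coordinate converges to $z(i)$ and the norms are bounded). The weak convergence $w_n \to z$ guarantees that every $w^*$-accumulation point $\varphi$ of $(\delta_{w_n})$ satisfies $\varphi|_{X'} = z$, i.e. $\varphi \in \M_z$; this is because for $x' \in X' = \ell_{p'}$ we have $x'(w_n) \to x'(z)$. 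So the whole accumulation set lies in $\M_z$, and since $\M(\A_u(B_{\ell_p}))$ is $w^*$-compact the set is nonempty.

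The key step is to show the sequence $(\delta_{w_n})$ is an \emph{interpolating} sequence in the sense defined in the preliminaries: given $(\alpha_n) \in \ell_\infty$, produce $f \in \A_u(B_{\ell_p})$ with $f(w_n) = \alpha_n$. Here the disjointness of supports is the crucial feature: $w_n$ is supported on the first $n+1$ coordinates and differs from $w_{n+1}$ precisely in the $(n+1)$st and $(n+2)$nd slots. The natural candidate is a series $f = \sum_n \alpha_n P_n$ where each $P_n$ is a polynomial (or uniformly continuous holomorphic "bump") that is $1$ at $w_n$ and small at all $w_m$, $m \neq n$; one builds $P_n$ out of coordinate functionals involving the $(n+1)$st coordinate (which separates $w_n$ from $w_1, \dots, w_{n-1}$) combined with a factor killing the tail coordinates, using Lemma \ref{lem:Pinasco} to control norms of products of polynomials with disjoint supports. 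Once interpolation is established, the $w^*$-closure of $\{\delta_{w_n}\}$ is homeomorphic to $\beta(\N)$, hence has cardinality $2^{\mathfrak c}$; subtracting the (at most countably many) points $\delta_{w_n}$ themselves, the accumulation set $w^*\text{-}ac\{\delta_{w_n}\}$ still has cardinality $2^{\mathfrak c}$.

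Next, to see that distinct accumulation points $\varphi \neq \psi$ lie in different Gleason parts, I would use the criterion recalled from \cite[Thm. 3.9]{stout1971theory}: if $\varphi(f_j) \to 1$ for $(f_j) \subset \overline B_{\A_u}$ then $\psi(f_j) \to 1$. Since $\varphi$ and $\psi$ are supported on disjoint "pieces" of $\beta(\N)$ — i.e. they are limits along ultrafilters $\mathcal U \neq \mathcal V$ on $\N$ — one picks $A \in \mathcal U$ with $\N \setminus A \in \mathcal V$ and, using interpolation, a function $g$ with $\|g\|\le 1$, $g(w_n) = 1$ for $n \in A$ and $g(w_n) = -1$ (or $0$) for $n \notin A$; then $\varphi(g) = 1$ but $\psi(g) \neq 1$ in modulus away from $1$, in fact $\rho(\varphi,\psi) = 1$, so $\varphi \notin \GP(\psi)$. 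The same argument, applied to $\varphi$ against any $\delta_{w_n}$ or against $\delta_0$, shows each such $\varphi$ lies outside $\GP(\delta_0)$ and outside the trivial parts on the sphere. Finally, for the inclusion $\GP(\varphi) \subset \M_z$: since $\varphi \notin \GP(\delta_0)$ and $R(\varphi) \le 1$, if $R(\varphi) < 1$ then Lemma \ref{lem:radio menor a 1} would force $\varphi \in \GP(\delta_0)$, a contradiction, so $R(\varphi) = 1$; one then argues that any $\psi \in \GP(\varphi)$ must also have $R(\psi) = 1$ and, more delicately, must share the fiber — here I would use that $\varphi$ approaches $1$ along the net $\delta_{w_n}$ only through its behaviour concentrated near $z$ in the relevant coordinates, so testing $\psi$ against the coordinate functionals $e_i' \in X'$ (which belong to $\overline B_{\A_u}$ after normalization) and the Gleason criterion pins down $\psi|_{X'} = z$.

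The main obstacle I anticipate is the construction of the interpolating functions with simultaneous control of the sup-norm \emph{and} uniform continuity on all of $B_{\ell_p}$ — the "bump" polynomials $P_n$ must decay fast enough in $n$ (so the series converges uniformly and defines a uniformly continuous function) while still being exactly $1$ at $w_n$; Lemma \ref{lem:Pinasco} and the Lipschitz estimate \eqref{eqn:Lipschitz} are exactly the tools meant to make this work, but assembling them so that the tail factors simultaneously separate $w_n$ from $w_m$ for \emph{all} $m > n$ (not just $m = n+1$) requires the tail $\tcm{(w_n)}{n+1} = s\,e_{n+1}$ versus $\tcm{(w_m)}{n+1}$ to be exploited carefully. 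A secondary subtlety is the last step, proving $\GP(\varphi) \subseteq \M_z$ rather than merely $\GP(\varphi) \cap \M_z \ne \emptyset$; this likely needs the second bullet of Remark \ref{rmk: PG-fibras} together with a refinement showing that the relation "$\psi$ approaches $1$ along $(\delta_{w_n})_{n \in \mathcal U}$" is strong enough to force $\psi|_{X'}$ to equal the weak limit of $(w_n)_{n\in\mathcal U}$, which is $z$.
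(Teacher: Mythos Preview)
Your outline has the right architecture (accumulation set in $\M_z$, cardinality via interpolation, Gleason separation, containment in the fiber), but two of the steps rest on claims that do not follow from the tools you invoke.

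\textbf{Gleason separation from interpolation alone does not work.} Suppose $(\delta_{w_n})$ is interpolating. The open mapping theorem gives a constant $M$ such that any $(\alpha_n)\in\ell_\infty$ is hit by some $g$ with $\|g\|\le M\|(\alpha_n)\|_\infty$. There is no reason to expect $M=1$. So when you pick $A\in\mathcal U\setminus\mathcal V$ and interpolate the values $1$ on $A$ and $0$ (or $-1$) off $A$, you get $\varphi(g)=1$, $\psi(g)=0$, but only $\|g\|\le M$. Normalising, $\varphi(g/\|g\|)\le 1/M$, which gives $\rho(\varphi,\psi)\ge 1/M$, not $\rho(\varphi,\psi)=1$; you cannot conclude that $\varphi$ and $\psi$ lie in different Gleason parts. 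Powers or iterations do not help, since $\|g^k\|$ may grow like $M^k$. The paper avoids this entirely: in Lemma~\ref{lem:distintos-gleason} it builds, for each $j$, an \emph{exactly} norm-one polynomial $P_j=r_j^{-l_j}s_j^{-k_j}R_jQ_j^1$ (the normalisation constant coming from Lemma~\ref{lem:Pinasco}) with $\varphi_1(P_j)\to1$ and $\varphi_2(P_j)\to0$. The Dirichlet-approximation choice of $l_j,k_j$ is what forces $\|P_j\|=1$ while still making $\varphi_1(P_j)\to1$; this is the idea your sketch is missing. The same explicit polynomials are what drive Proposition~\ref{prop:resultado-ppal-ellp} (where the interpolating sequence is built out of \emph{accumulation points} $\varphi^u$, not out of the $\delta_{w_n}$ directly).

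\textbf{The containment $\GP(\varphi)\subset\M_z$ is not accessible by ``testing against $e_i'$''.} For an arbitrary $\psi\in\GP(\varphi)$ you know nothing a priori about $\psi(e_i')$: the Gleason criterion only transfers limits $\varphi(f_j)\to1$ to $\psi(f_j)\to1$ for $\|f_j\|\le1$, and $|\varphi(e_i')|=|z(i)|<1$. Your suggestion that $\psi$ ``approaches~$1$ along $(\delta_{w_n})_{n\in\mathcal U}$'' presupposes that $\psi$ is itself an accumulation point, which is exactly what is not assumed. The paper's argument (Proposition~\ref{prop:gleason-no-intersec}) is substantially harder: for each $\omega\ne z$ and each $\psi\in\M_\omega$ it exhibits $(P_j)$ with $\varphi(P_j)\to1$ but $|\psi(P_j)|$ bounded away from~$1$. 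This splits into three cases ($z=0\ne\omega$; $\omega=\lambda z$; $\omega,z$ linearly independent) and, in the last case, uses uniform convexity of $\ell_p$ (Remark~\ref{rem:unif-convex}) together with the quantitative bound of Lemma~\ref{lem:existe_m} to show $|\psi(P_j)|\le(1-\eta^2)$. None of this is captured by your outline.
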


Furthermore, in Proposition \ref{prop:nocorona} below, we give a precise description of the $w^*$-limit of evaluations belonging to  Gleason parts over elements in $w^*-ac\{ \delta_{\tc{z}{n}+s e_{n+1}} : n\in \mathbb{N}\}$. We recall that the \emph{corona} consists of all homomorphisms that are not $w^*$-limit of evaluations; which, in general, is not known whether it is empty or not (see \cite{carleson1962interpolations} for the first positive result in this direction). Therefore, we are describing the elements in some Gleason part that are outside the corona.

We devote most of this section to the proof of Theorem \ref{thm:resultado-ppal-ellp}, which is developed in four main steps. First, in Proposition \ref{prop:distinta-gleason} we deal with fibers over elements of finite support $z\in B_{\ell_p}$ satisfying $\|z\|^p\in\Q$. Then, in Proposition \ref{prop:ele-p-no-racional}, we show that for an arbitrary $z\in B_{\ell_p}$, there exists
an element in the fiber over $z$ which does not belong to the Gleason part of $\delta_z$. By a somehow canonical argument (which turns out to be rather technical in this case),  we show in Proposition \ref{prop:resultado-ppal-ellp} that there are actually $2^{\mathfrak c}$ different Gleason parts intersecting the same fiber, thus answering the question posed in \cite{DimLasMae}. Finally, in Proposition \ref{prop:gleason-no-intersec}, we prove that those Gleason parts are in fact contained in the fiber of $z$. Let us start with the first step towards the proof of Theorem \ref{thm:resultado-ppal-ellp}.

\begin{prop}\label{prop:distinta-gleason}
    Let $1<p<\infty$, $m\in\N$, $z\in B_{\ell_p}$ with  $\supp( z)\subset \{1,\ldots, m\}$  and $\|z\|^p\in\Q$, and let $s=\left(1-\|z\|^p\right)^{1/p}$. Then, for any
    $\varphi\in w^*-ac\{ \delta_{z+s e_{n}} : n\in \mathbb{N}, n> m\}\subset\M_z$
    we have  $\GP(\varphi)\not=\GP(\delta_z)$. Moreover, there is a norm-one homogeneous polynomial $P$ such that the pseudo-hyperbolic distance is attained:
    $$\rho(\varphi,\delta_z)=|\varphi(P)|=1.$$
\end{prop}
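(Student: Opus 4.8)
The plan is to exhibit a single norm-one homogeneous polynomial $P$ on $\ell_p$ that simultaneously witnesses $\rho(\varphi,\delta_z)=1$ and vanishes at $z$, and to read off from it that $\GP(\varphi)\neq\GP(\delta_z)$. The construction of $P$ should separate the ``fixed'' part of $z$ (supported on $\{1,\dots,m\}$) from the ``moving'' bump $s e_n$, using the hypothesis $\|z\|^p\in\Q$ to make the degrees match up nicely. Concretely, write $\|z\|^p = a/b$ with $a,b\in\N$, set $r = (1-\|z\|^p)^{1/p} = s$ so that $s^p = (b-a)/b$, and look for $P$ of the form $P(x) = c\,\ell(x)^{\,k}\cdot Q(x)$ (or a sum/product of such pieces), where $\ell$ is a norm-one functional supported on $\{1,\dots,m\}$ with $\ell(z)=\|z\|$ and $Q$ is a homogeneous polynomial supported on $\{m+1,m+2,\dots\}$ that, roughly, behaves like $x\mapsto \big(\sum_{i>m} x(i)^{\,?}\big)$ so that $Q(e_n)$ is constant in $n$. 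By Lemma \ref{lem:Pinasco}, since the coordinate sets are disjoint, the norm of such a product factors explicitly, and choosing the degrees $l=$ (degree of $\ell^k$) and $k=$ (degree of $Q$) proportional to $a$ and $b-a$ (scaled by a common integer) will make $\|P\| = |P(z+s e_n)|$ for all $n>m$; the rationality of $\|z\|^p$ is exactly what allows the exponents $l,k$ to be chosen as integers so that $\big(\tfrac{l^l k^k}{(l+k)^{l+k}}\big)^{1/p}$ cancels against the values $\|z\|^l s^k$.

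First I would make this precise: pick the smallest integer $d$ with $d\|z\|^p, d s^p \in\N$ (so $d=b$), set $l = b\|z\|^p = a$ and $k = b s^p = b-a$, and define $\ell \in S_{\ell_p'} = S_{\ell_q}$ supported on $\{1,\dots,m\}$ with $\ell(z)=\|z\|$ (a normalized conjugate of $z$ on its support). For $Q$, on the tail coordinates, I would use a polynomial of degree $k$ whose norm on $B_{\ell_p}$ is attained and which is ``spread out'' enough that $|Q(e_n)| = \|Q\|\cdot(\text{something independent of }n)$ — the cleanest choice being $Q$ built from a single tail functional, or better, choosing the evaluation points $z+se_n$ and noting $Q$ need only be normalized so $|Q(se_n)|$ is the right constant; a functional $g_n = e_n^*$ composed to the $k$-th power won't have the right norm, so instead I would take $Q$ to be $\|\cdot\|_{\ell_p}$-type, i.e. consider $Q$ implicitly via the factorization and verify $\|\ell^l\|=1$, and that $\ell(z)^l = \|z\|^l = \|z\|^{a}$. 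Then $P := \lambda\, \ell^{l}\cdot Q$ with $Q$ supported on the tail, $\|Q\|=1$, $|Q(e_n)| = s^{-k}\cdot(\text{const})$: after normalizing $P$ via Lemma \ref{lem:Pinasco}, $|P(z+se_n)| = \lambda \|z\|^{a} |Q(se_n)| = 1$ for every $n>m$, hence $|\varphi(P)| = \lim_\alpha |P(z+se_{n_\alpha})| = 1 = \|P\|$, while $P(z) = \lambda\,\ell(z)^a\,Q(z) = 0$ because $Q$ is supported off $\{1,\dots,m\}$ and $z$ is supported on $\{1,\dots,m\}$.

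With such a $P$ in hand the conclusion is immediate: $P\in\overline{B}_{\A_u(B_{\ell_p})}$, $\delta_z(P) = P(z) = 0$, so $\rho(\varphi,\delta_z) \ge |\varphi(P)| = 1$, and since always $\rho \le 1$ we get $\rho(\varphi,\delta_z)=1$, i.e. $\GP(\varphi)\neq\GP(\delta_z)$ (using the equivalence $\rho<1 \iff$ same Gleason part recalled in the Preliminaries). It remains only to check that $\varphi$ indeed lies in $\M_z$: for each $\psi = e_j^*\in X' = \ell_q$ with $j\le m$ we have $(z+se_n)(j) = z(j)$ eventually, and for $j>m$, $(z+se_n)(j) \to 0$, so $\delta_{z+se_n}|_{X'} \to z$ in $w^*$, whence every $w^*$-accumulation point $\varphi$ satisfies $\pi(\varphi)=z$; this is essentially the content already used implicitly in the statement. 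The main obstacle I anticipate is the bookkeeping in the construction of $Q$ on the tail coordinates so that a \emph{single} polynomial $P$ has norm attained simultaneously at \emph{all} the points $z+se_n$, $n>m$ — one wants $|Q(se_n)|$ to be the same constant for every $n$ and equal to $\|Q\|$ times the factor dictated by Lemma \ref{lem:Pinasco}; getting the degrees $l=a$, $k=b-a$ (or a common multiple thereof) to produce exactly the reciprocal of $\big(\tfrac{l^lk^k}{(l+k)^{l+k}}\big)^{1/p}$ is where the hypothesis $\|z\|^p\in\Q$ is essential and where the computation must be done carefully. The generalization to arbitrary finitely supported $z$ (Proposition \ref{prop:ele-p-no-racional}) and then to all $z\in B_{\ell_p}$ will presumably proceed by perturbation, approximating $\|z\|^p$ by rationals and using Lemma \ref{lem:redes_cerca}, but that is the next step, not this one.
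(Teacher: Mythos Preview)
Your overall plan matches the paper's proof exactly: write $P = c \cdot R \cdot Q$ with $R(x)=(z'(x))^l$ for a norming functional $z'\in S_{\ell_{p'}}$ supported on $\{1,\dots,m\}$, and $Q$ a degree-$k$ homogeneous polynomial supported on the tail coordinates, then invoke Lemma~\ref{lem:Pinasco} to compute $\|P\|$ and verify $P(z+se_n)=1$, $P(z)=0$. You also correctly identify that the degrees should satisfy $\|z\|^p = l/(l+k)$ (your $l=a$, $k=b-a$, up to a common multiple), which is precisely where rationality enters.

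The one genuine gap is that you never commit to what $Q$ is, and you miss the side condition that makes it work. The paper simply takes
\[
Q(x)=\sum_{i>m}\big(e_i'(x)\big)^k,
\]
which gives $Q(e_n)=1$ for every $n>m$ and $Q(z)=0$. For this $Q$ to have norm $1$ (indeed, to be bounded on $B_{\ell_p}$ at all) one needs $k\ge p$; the paper secures this by noting that $(l,k)$ may be replaced by $(Nl,Nk)$ for any positive integer $N$ without changing the ratio $l/(l+k)=\|z\|^p$, so after scaling one has $k\ge p$. Once $k\ge p$, for $x\in \overline B_{\ell_p}$ one has $\sum_{i>m}|x(i)|^k\le \sum_{i>m}|x(i)|^p\le 1$, with equality at each $e_n$ ($n>m$), so $\|Q\|=1$. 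Then $P(x)=r^{-l}s^{-k}R(x)Q(x)$ has norm exactly $1$ by Lemma~\ref{lem:Pinasco}, since $r^{l}s^{k}=\big(\tfrac{l^l k^k}{(l+k)^{l+k}}\big)^{1/p}$, and the verifications $P(z+se_n)=1$, $P(z)=0$ are one-line computations. Your paragraph circles around this $Q$ (``behaves like $x\mapsto \sum_{i>m} x(i)^{?}$'') and you even flag the construction of $Q$ as the anticipated obstacle, but you never land on the explicit choice, and the condition $k\ge p$ is the missing detail that makes the bookkeeping close.
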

\begin{proof}
   Let $r=\|z\|$. Since  $r^p\in\Q\cap (0,1)$  there exist $k,l\in\N$ such that 
   $r^p=\frac{l}{l+k}$. Furthermore, by multiplying $k$ and $l$ by some integer number if necessary, we may assume that $k \ge p$. Note that $s=\left(1-r^p\right)^{1/p}= \left(\frac{k}{l+k}\right)^{1/p}$.
Choose $z' = (z'(1), \ldots, z'(m), 0, 0, \ldots) \in S_{\ell_{p'}} $ satisfying $z'(z)=\|z\|$. 

Given $\varphi\in w^*-ac\{ \delta_{z+s e_{n}} : n> m\}$, since $z+s e_{n}\overset{w}{\to} z$, it is clear that $\varphi\in\M_z$.  To prove our result we will construct a homogeneous polynomial \( P \)  for which $P(z)=0$ and \( P \) attains its norm at \( z + se_n \) for every \( n > m\). To that end, we will consider two polynomials, \( R \) and \( Q \), which attain their norms at \( \frac{z}{\|z\|} \) and \( e_n \), respectively. Using the fact that these polynomials depend on different variables and selecting them with the correct degree, we will conclude that \( RQ \) attains its norm at \( z + se_n \). This is a recurring argument that will be applied, with some technical adjustments, throughout this work.

We define  $R\in\mathcal P(^l\ell_p)$  by $R(x) = (z'(x))^l$ and $Q\in\mathcal P(^k\ell_p)$ by $Q(x) = \sum_{i > m} (e_i'(x))^k$, which clearly have norm one. Let $P\in\mathcal P(^{l+k}\ell_p)$ be given by $P(x)=r^{-l}s^{-k}R(x)Q(x)$. Since $R$ and $Q$ depend on different variables,  Lemma \ref{lem:Pinasco} tells us  that $$\|P\|=\frac{1}{r^ls^k} \|R\|\|Q\| \left(\frac{l^l k^k}{(l+k)^{l+k}}\right)^{\frac1p}=1.$$ Now, for $n> m$,
$$\delta_{z + se_{n}} (P) = \frac{1}{r^l s^k} R (z)  Q (se_{n}) = \frac{1}{r^l s^k} \| z \|^l  s^k=1, $$
which implies that $\varphi(P)=1$.
On the other hand,   $\delta_z(P)=P(z)=0$ indicating that $\rho(\varphi,\delta_z)=|\varphi(P)|=1$. Hence,  $\GP(\varphi)\not=\GP(\delta_z)$. 
\end{proof}

We now work on the arguments from the previous proof to obtain the result for general $z\in B_{\ell_p}$. 

\begin{prop} \label{prop:ele-p-no-racional}
    Let $1<p<\infty$, $z\in B_{\ell_p}$ and $s=\left(1-\|z\|^p\right)^{1/p}$. Then, for every $\varphi\in w^*-ac\{ \delta_{\tc{z}{n}+s e_{n+1}} : n\in \mathbb{N}\}$, we have $\GP(\varphi)\not=\GP(\delta_z)$.
\end{prop}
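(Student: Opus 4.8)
The plan is to reduce the general case to Proposition~\ref{prop:distinta-gleason} by a perturbation argument. The obstacle compared to that proposition is twofold: $z$ need not have finite support, and $\|z\|^p$ need not be rational. So first I would dispose of the support issue. Fix $\varphi\in w^*\text{-}ac\{\delta_{\tc{z}{n}+se_{n+1}}:n\in\N\}$, and choose a subnet $(\tc{z}{n_\alpha}+se_{n_\alpha+1})_\alpha$ with $\delta_{\tc{z}{n_\alpha}+se_{n_\alpha+1}}\xrightarrow{w^*}\varphi$. Since $\tc{z}{n}\to z$ in norm, for large $n$ the vectors $\tc{z}{n}$ have norm close to $\|z\|$ and finite support; the point is that they are \emph{not} on the sphere of the appropriate scaled ball, but $\tc{z}{n}+se_{n+1}$ has norm $(\|\tc{z}{n}\|^p+s^p)^{1/p}\to 1$, not exactly $1$. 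This is where Lemma~\ref{lem:redes_cerca} enters: I would replace $\tc{z}{n}+se_{n+1}$ by a nearby net lying inside $\overline B_{\ell_p}$ and adapted to a rational-norm target, and invoke the lemma to conclude $\varphi$ is still the $w^*$-limit.

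Concretely, here is the intended construction. Pick a rational $q$ with $\|z\|^p<q<1$ and write $q=\frac{l}{l+k}$ with $k\ge p$ (enlarging $l,k$ as in the proof of Proposition~\ref{prop:distinta-gleason}). Set $r=q^{1/p}$ and $\sigma=(1-q)^{1/p}$. For each $n$, define $w_n=\frac{r}{\|\tc{z}{n}\|}\,\tc{z}{n}+\sigma e_{n+1}$ once $\|\tc{z}{n}\|>0$; this is a norm-one vector supported in $\{1,\dots,n,n+1\}$, with its ``head part'' a positive multiple of a fixed-support vector of norm $r$ and its tail equal to $\sigma e_{n+1}$. The difference $\|(\tc{z}{n}+se_{n+1})-w_n\|$ is controlled by $\big|\,1-\tfrac{r}{\|\tc{z}{n}\|}\,\big|\,\|\tc{z}{n}\| + |s-\sigma|$, which does \emph{not} go to zero because $r\ne\|z\|$ in general. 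So a single fixed $q$ will not work directly; instead I would let $q=q_\alpha\downarrow\|z\|^p$ along the subnet, so that the corresponding $r_\alpha\to\|z\|$ and $\sigma_\alpha\to s$, making $\|(\tc{z}{n_\alpha}+se_{n_\alpha+1})-w_{n_\alpha}\|\to 0$; then Lemma~\ref{lem:redes_cerca} gives $\varphi=w^*\text{-}\lim\delta_{w_{n_\alpha}}$.

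With that in place I would build a single norm-one polynomial $P$ separating $\varphi$ from $\delta_z$ simultaneously for all the relevant $q_\alpha$, or — cleaner — argue that it suffices to produce, for each $\varepsilon>0$, a norm-one polynomial $P_\varepsilon$ with $P_\varepsilon(z)=0$ and $|\varphi(P_\varepsilon)|\ge 1-\varepsilon$, which already forces $\rho(\varphi,\delta_z)=1$ and hence $\GP(\varphi)\ne\GP(\delta_z)$. For fixed $q_\alpha=\frac{l}{l+k}$, take $R(x)=\big(z'(x)/\|z\|\big)^{\cdot}$ built from a norm-one functional $z'\in S_{\ell_{p'}}$ with $z'(z)=\|z\|$, precisely $R(x)=(z'(x))^l$ of norm one, and $Q(x)=\sum_{i>N}(e_i'(x))^k$ for a suitable cutoff $N$ large enough that the subnet indices $n_\alpha+1$ exceed $N$; set $P=r_\alpha^{-l}\sigma_\alpha^{-k}RQ$, which has norm one by Lemma~\ref{lem:Pinasco} since $R,Q$ depend on disjoint coordinates. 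Then $\delta_{w_n}(P)=\frac{1}{r_\alpha^l\sigma_\alpha^k}R\big(\tfrac{r_\alpha}{\|\tc{z}{n}\|}\tc{z}{n}\big)\,Q(\sigma_\alpha e_{n+1})$; the first factor is $r_\alpha^l$ up to the error coming from $z'(\tc{z}{n})\to z'(z)=\|z\|$, and the second is $\sigma_\alpha^k$, so $\delta_{w_n}(P)\to 1$ along the subnet, giving $\varphi(P)=1$ while $P(z)=R(z)Q(z)\cdot(\text{const})$, which vanishes because $Q$ is supported on coordinates $>N\ge\supp$ of the part of $z$ that matters — here one must be slightly careful since $z$ need not have finite support, so I would instead arrange $Q(x)=\sum_{i>N}(e_i'(x))^k$ with $N$ chosen so that $\|\tcm{z}{N}\|$ is tiny, making $|P(z)|=\big|r_\alpha^{-l}\sigma_\alpha^{-k}R(z)Q(z)\big|$ small rather than exactly zero; this still yields $\rho(\varphi,\delta_z)\ge |\varphi(P)|-|\delta_z(P)|$-type control after normalizing, or one perturbs $P$ by subtracting a small multiple to kill $P(z)$ exactly without changing $\varphi(P)$ much. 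The main obstacle, then, is precisely this bookkeeping: simultaneously handling the non-rationality of $\|z\|^p$ (via the $q_\alpha\downarrow\|z\|^p$ device together with Lemma~\ref{lem:redes_cerca}) and the infinite support of $z$ (via large cutoffs $N$ so tails are negligible), while keeping the separating polynomial of norm one; once the estimates are arranged so that $|\varphi(P_\varepsilon)|\to 1$ and $\delta_z(P_\varepsilon)=0$, the conclusion $\GP(\varphi)\ne\GP(\delta_z)$ follows exactly as in Proposition~\ref{prop:distinta-gleason}.
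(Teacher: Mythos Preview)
There is a genuine gap in the argument, and it sits exactly at the step where you claim $\delta_{w_n}(P)\to 1$ and hence $\varphi(P)=1$.

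You set things up so that the replacement net $w_{n_\alpha}=\tfrac{r_\alpha}{\|\tc{z}{n_\alpha}\|}\tc{z}{n_\alpha}+\sigma_\alpha e_{n_\alpha+1}$ has $r_\alpha\to\|z\|$ and $\sigma_\alpha\to s$; this is what makes Lemma~\ref{lem:redes_cerca} applicable. But then, for a polynomial $P=r_{\alpha_0}^{-l}\sigma_{\alpha_0}^{-k}RQ$ built from one \emph{fixed} rational $q_{\alpha_0}=\tfrac{l}{l+k}$, the evaluation along the net gives
\[
\varphi(P)=\lim_\alpha P(w_{n_\alpha})
=\frac{1}{r_{\alpha_0}^{l}\sigma_{\alpha_0}^{k}}\Big(\lim_\alpha z'\big(\tfrac{r_\alpha}{\|\tc{z}{n_\alpha}\|}\tc{z}{n_\alpha}\big)\Big)^{l}\,\big(\lim_\alpha\sigma_\alpha\big)^{k}
=\frac{\|z\|^{\,l}\,s^{\,k}}{r_{\alpha_0}^{\,l}\,\sigma_{\alpha_0}^{\,k}},
\]
since $r_\alpha\to\|z\|$ and $\sigma_\alpha\to s$. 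This is always $\le 1$ and equals $1$ only when $q_{\alpha_0}=\|z\|^p$. Your computation implicitly evaluates at points $w_n$ built with the \emph{same} $r_{\alpha_0},\sigma_{\alpha_0}$ used in $P$ (which would give $1$), but those are not the points over which $\varphi$ is the limit. In short: if you keep $r_\alpha$ fixed you lose $\varphi=\lim\delta_{w_n}$; if you let $r_\alpha$ vary you lose $P(w_{n_\alpha})\to 1$. Simply letting $q_\alpha\downarrow\|z\|^p$ does not resolve this, because with no control on how $l,k$ grow relative to $|q_{\alpha_0}-\|z\|^p|$ the quantity $\big(\|z\|/r_{\alpha_0}\big)^{l}\big(s/\sigma_{\alpha_0}\big)^{k}$ need not be close to~$1$.

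The paper handles this directly and avoids the detour through Lemma~\ref{lem:redes_cerca} altogether: it works with the original points $\tc{z}{n}+se_{n+1}$ and chooses the degrees via Dirichlet's approximation theorem, taking $l_j,k_j$ with $(l_j+k_j)\|z\|^p-l_j\to 0$, which forces $\big(\|z\|/r_j\big)^{l_j}\to 1$ and $\big(s/s_j\big)^{k_j}\to 1$. With that (and a matching choice of truncation level $m_j$), the norm-one polynomials $P_j=r_j^{-l_j}s_j^{-k_j}R_jQ_j$ satisfy $\varphi(P_j)\to 1$; since $P_j(0)=0$ and $\GP(\delta_z)=\GP(\delta_0)$, this immediately gives $\GP(\varphi)\ne\GP(\delta_z)$. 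Note that this last observation also makes your worry about $P(z)\ne 0$ disappear.

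A secondary issue: you take $z'\in S_{\ell_{p'}}$ norming $z$, which in general has infinite support, so $R$ and $Q$ do \emph{not} depend on disjoint coordinates and Lemma~\ref{lem:Pinasco} is not directly applicable to conclude $\|P\|=1$. The paper fixes this by using a finitely supported $z'_{m_j}$ norming the truncation $\tc{z}{m_j}$, at the price of an extra factor $\big(\|\tc{z}{m_j}\|/\|z\|\big)^{l_j}$ that is controlled by choosing $m_j$ large enough.
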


\begin{proof}
Recalling that $\GP(\delta_0)=\GP(\delta_z)$, to prove our result it suffices to construct a sequence of homogeneous polynomial \( P_j \) (thus satisfying $\delta_0(P_j)=P_j(0)=0$) such that $\varphi(P_j)\underset{j\to\infty}{\longrightarrow} 1 $. 

     Let $r=\|z\|<1$. By Dirichlet's approximation theorem,  there exist sequences $(k_j), (l_j)\subset\N$ such that  
 $\frac{l_j}{l_j+k_j}\underset{j\to\infty}{\rightarrow}r^p$ and $(l_j+k_j)r^p-l_j\underset{j\to\infty}{\rightarrow} 0$.
 Also, 
 after a suitable multiplication if necessary (as in the proof of Proposition \ref{prop:distinta-gleason}),  we may suppose
$k_j\ge p$ for all $j$. 

For  $s=(1-r^p)^{1/p}$ we observe that  $\frac{k_j}{l_j+k_j}\underset{j\to\infty}{\rightarrow}s^p$ and also   $(l_j+k_j)s^p-k_j\underset{j\to\infty}{\rightarrow} 0$. Define
$$
r_j=\left(\frac{l_j}{l_j+k_j}\right)^{\frac1p}\quad \text{ and } \quad s_j=\left(\frac{k_j}{l_j+k_j}\right)^{\frac1p}.
$$ Note that if $r^p\in\Q$ we can take $r_j=r$ and $s_j=s$. If $r^p\not\in\Q$ then $l_j, k_j \rightarrow \infty$ and   it follows from  Dirichlet's conditions that
\begin{equation}\label{eqn:r-s-lim-1}
 \left(\frac{r}{r_j}\right)^{l_j} \underset{j\to\infty}{\longrightarrow} 1 \quad \text{ and } \quad  \left(\frac{s}{s_j}\right)^{k_j} \underset{j\to\infty}{\longrightarrow} 1.   
\end{equation}
For each $m\in\N$, let $\tc{z}{m} = (z(1), z(2), \ldots, z(m), 0, 0, \ldots)\in B_{\ell_p}$ be the $m$-truncation of $z$. The fact that $\|\tc{z}{m}\|\to\|z\|$ allows us to choose  $(m_j)_j\subset \N$ such that $m_j\ge j$ for all $j$ and
\begin{equation}\label{eqn:norm-lim-1}
\left(\frac{\|\tc{z}{m_j}\|}{\|z\|}\right)^{l_j} \underset{j\to\infty}{\longrightarrow} 1.
\end{equation}
Finally, for each  $j$ we take  $z'_{m_j} = (z'_{m_j}(1), \ldots, z'_{m_j}({m_j}), 0, 0, \ldots) \in S_{\ell_{p'}} $ satisfying $z'_{m_j}(\tc{z}{m_j})=\|\tc{z}{m_j}\|$.

Since $\|\tc{z}{n}+se_{n+1}\|^p=\|\tc{z}{n}\|^p + s^p\le 1$ for all $n$ and  $\tc{z}{n}+s e_{n+1}\overset{w}{\to} z$, for any $\varphi\in w^*-ac\{ \delta_{\tc{z}{n}+s e_{n+1}} : n\in \mathbb{N}\}$, we have  $\varphi\in\M_z$.

 Define  norm-one polynomials $R_j\in\mathcal P(^{l_j}\ell_p)$  by $R_j(x) = (z'_{m_j}(x))^{l_j}$ and $Q_j\in\mathcal P(^{k_j}\ell_p)$ by $Q_j(x) = \sum_{i > m_j} (e_i'(x))^{k_j}$, for each $j\in\N$. Consider $P_j\in\mathcal P(^{l_j+k_j}\ell_p)$ given by  $P_j(x)={r_j^{-l_j} s_j^{-k_j}}R_j(x)Q_j(x)$. As in the previous proposition,  from the fact that $R_j$ and $Q_j$ depend on different variables we can apply Lemma \ref{lem:Pinasco} to get $\|P_j\|=\|R_j\|\|Q_j\|=1$.

Now, for $n>m_j$  we have $ \delta_{\tc{z}{n}+s e_{n+1}}(R_j)= z_{m_j}'(\tc{z}{n})^{l_j}=z_{m_j}'(\tc{z}{m_j})^{l_j}=\|\tc{z}{m_j}\|^{l_j} $ and $ \delta_{\tc{z}{n}+s e_{n+1}}(Q_j)= \sum_{i=m_j+1}^n z(i)^{k_j} + s^{k_j}$. Thus, 
\begin{equation}\label{eqn:phi-evaluadaen-Pj}
\varphi(P_j)= \frac{\|\tc{z}{m_j}\|^{l_j}}{r_j^{l_j} s_j^{k_j}}  \Big(\sum_{i>m_j} z(i)^{k_j} + s^{k_j}\Big) = \left(\frac{\|\tc{z}{m_j}\|}{\|z\|}\right)^{l_j}  \left(\frac{r}{r_j}\right)^{l_j} \,\,  \frac{\sum_{i>m_j} z(i)^{k_j} + s^{k_j}}{s_j^{k_j}}.
\end{equation}
Noting that $\sum_{i>m_j} z(i)^{k_j}\to 0$ (since $k_j\geq p$), by \eqref{eqn:r-s-lim-1} and \eqref{eqn:norm-lim-1} we obtain that $\varphi(P_j)\underset{j\to\infty}{\longrightarrow} 1 $. In consequence,  $\GP(\varphi)\not=\GP(\delta_0)=\GP(\delta_z)$.
\end{proof}

Once we know that, for each $z\in B_{\ell_p}$, there is a homomorphism $\varphi\in\M_z$ in a different Gleason part than $\delta_z$, we prove that there are, in fact, many homomorphisms satisfying these conditions. To demonstrate this, we begin by showing that any two distinct elements constructed by the same procedure given above do not lie in the same Gleason part.

\begin{lem}\label{lem:distintos-gleason}
  Let $1<p<\infty$, $z\in B_{\ell_p}$ and $s=\left(1-\|z\|^p\right)^{1/p}$. Given $\varphi_1, \varphi_2 \in  w^*-ac\{ \delta_{\tc{z}{n}+s e_{n+1}} : n\in \mathbb{N}\}$, we have $\GP(\varphi_1)=\GP(\varphi_2) $ if and only if $\varphi_1=\varphi_2.$ 
\end{lem}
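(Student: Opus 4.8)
The forward implication is immediate, so the real content is showing that $\varphi_1\neq\varphi_2$ forces $\GP(\varphi_1)\neq\GP(\varphi_2)$. The plan is to produce a single sequence of norm-one homogeneous polynomials $(P_j)_j$ with $\varphi_1(P_j)\to 1$ and $\varphi_2(P_j)\to 0$; by the sequential characterization of Gleason parts recalled above (\cite[Thm.~3.9]{stout1971theory}, see also \cite[Lem.~2.1]{DimLasMae}), this already yields $\GP(\varphi_1)\neq\GP(\varphi_2)$. To separate the two homomorphisms I would first use that both are $w^*$-accumulation points of the \emph{sequence} $(\delta_{x_n})_n$ with $x_n=\tc{z}{n}+s e_{n+1}$: writing $\varphi_i=w^*\text{-}\lim_{\,n\to\U_i}\delta_{x_n}$ for free ultrafilters $\U_i$ on $\N$, the hypothesis $\varphi_1\neq\varphi_2$ forces $\U_1\neq\U_2$, hence there is $A\subseteq\N$ with $A\in\U_1$ and $\N\setminus A\in\U_2$ (both automatically infinite).

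The construction then copies the one in Proposition~\ref{prop:ele-p-no-racional}, keeping the sequences $(l_j),(k_j),(r_j),(s_j)$ and the polynomials $R_j(x)=(z'_{m_j}(x))^{l_j}$ verbatim, but replacing $Q_j$ by the ``$A$-selective'' version
$$Q_j(x)=\sum_{\substack{i>m_j\\ i-1\in A}}(e_i'(x))^{k_j},$$
which still has norm one (because $k_j\ge p$) and still depends on coordinates disjoint from those of $R_j$, so $\|P_j\|=1$ for $P_j=r_j^{-l_j}s_j^{-k_j}R_jQ_j$ by Lemma~\ref{lem:Pinasco}, exactly as before. The point of the modification is that, evaluating at $x_n$ with $n>m_j$, the ``tail bump'' $se_{n+1}$ is detected by $Q_j$ \emph{precisely when} $n\in A$: one gets $R_j(x_n)=\|\tc{z}{m_j}\|^{l_j}$ and $Q_j(x_n)=\sum_{m_j<i\le n,\,i-1\in A}z(i)^{k_j}+\mathbbm 1_A(n)\,s^{k_j}$. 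Hence $\delta_{x_n}(P_j)$ equals $\mathbbm 1_A(n)$ times the quantity $\bigl(\|\tc{z}{m_j}\|/\|z\|\bigr)^{l_j}(r/r_j)^{l_j}(s/s_j)^{k_j}$ up to an error bounded by a constant times $\varepsilon_j/s_j^{k_j}$, where $\varepsilon_j:=\sum_{i>m_j}|z(i)|^{k_j}$. By \eqref{eqn:r-s-lim-1} and \eqref{eqn:norm-lim-1} the main factor tends to $1$, so passing to the limit along $\U_1$ (where $\{n\in A:n>m_j\}$ is large) gives $\varphi_1(P_j)\to 1$, and along $\U_2$ (where $\{n\notin A:n>m_j\}$ is large) gives $\varphi_2(P_j)\to 0$, completing the proof.

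The one genuinely new ingredient — and the step I expect to demand the most care — is the error term $\varepsilon_j/s_j^{k_j}$: in Proposition~\ref{prop:ele-p-no-racional} it sufficed that $\varepsilon_j\to 0$, but here the factor $\mathbbm 1_A(n)$ kills the main term on $\N\setminus A$, so I also need $\varepsilon_j/s_j^{k_j}\to 0$. I would secure this by imposing, on top of \eqref{eqn:norm-lim-1}, that $m_j\ (\ge j)$ be chosen large enough that $\sum_{i>m_j}|z(i)|^{k_j}<s_j^{k_j}/j$, which is possible since for each fixed $j$ this tail tends to $0$ as $m_j\to\infty$ and $s_j>0$. The degenerate case $z=0$ (so one takes $l_j=0$, $s_j=1$ and no $R_j$ factor) is even easier, as then $\delta_{x_n}(Q_j)=\mathbbm 1_A(n)$ exactly. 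Apart from these adjustments, everything reduces to the bookkeeping already performed in Proposition~\ref{prop:ele-p-no-racional}.
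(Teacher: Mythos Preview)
Your proposal is correct and follows essentially the same route as the paper: separate $\varphi_1,\varphi_2$ by an index set $A$ (the paper phrases this as a partition $\N=\N_1\cup\N_2$ with $\varphi_u\in w^*\text{-}ac\{\delta_{\tc{z}{n}+se_{n+1}}:n+1\in\N_u\}$ rather than via ultrafilters, but the content is identical), then multiply $R_j$ from Proposition~\ref{prop:ele-p-no-racional} by the $A$-selective $Q_j$ and run the same limit computation. One small simplification the paper makes: your extra requirement on $m_j$ is unnecessary, since $\varepsilon_j/s_j^{k_j}\le(\|\tcm{z}{m_j}\|/s_j)^{k_j}\to 0$ already follows from $m_j\ge j\to\infty$, $s_j\to s>0$ and $k_j\ge p$.
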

\begin{proof}
    If $\varphi_1\neq \varphi_2$, there are disjoint countably sets $\mathbb{N}_1$ and $\mathbb{N}_2$ such that $\mathbb{N}_1\cup \mathbb{N}_2=\mathbb N$ and $\varphi_u \in w^*-ac\{ \delta_{\tc{z}{n}+s e_{n+1}} : n+1\in \mathbb{N}_u\}$ for $u=1,2$. 

    Take $m_j$, $k_j$, $l_j$ and $z'_{m_j}$ as in Proposition \ref{prop:ele-p-no-racional} and define $R_j\in\mathcal P(^{l_j}\ell_p)$  by $R_j (x)= (z'_{m_j}(x))^{l_j}$ and $Q_j^1\in\mathcal P(^{k_j}\ell_p)$ by $Q_j^1(x) = \displaystyle{\sum_{\substack{i > m_j \\ i\in \N_1}}} (e_i'(x))^{k_j}$. Note that these polynomials are essentially the same as those in Proposition \ref{prop:ele-p-no-racional}, with the difference that  $Q_j^1$ involves only coordinates belonging to $\N_1$. 

Proceeding like in Proposition \ref{prop:ele-p-no-racional}  (see \eqref{eqn:phi-evaluadaen-Pj}), we can see  that the sequence $\varphi_1(r_j^{-l_j}s_j^{-k_j}{ R_j Q_j^1})$ converges to $1$. Now, let us show  that $\varphi_2(r_j^{-l_j}s_j^{-k_j}{ R_j Q_j^1})$ converges to $0$, which gives the desired conclusion. Indeed, since the polynomial $Q_j^1$ does not depend on the coordinates on $\N_2$,   
\begin{eqnarray*}
\varphi_2(r_j^{-l_j}s_j^{-k_j}{ R_j Q_j^1}) &=& \left(\frac{\|\tc{z}{m_j}\|}{\|z\|}\right)^{l_j}  \left(\frac{r}{r_j}\right)^{l_j}  \frac{Q_j^1(z)}{s_j^{k_j}} \\ &=&  \left(\frac{\|\tc{z}{m_j}\|}{\|z\|}\right)^{l_j}  \left(\frac{r}{r_j}\right)^{l_j}  \frac{\sum_{\substack{i > m_j \\ i\in \N_1}} z(i)^{k_j} }{s_j^{k_j}}\rightarrow 0, 
\end{eqnarray*}
because, with the chosen $m_j$, $ k_j$ and $l_j$, we have $\left(\frac{\|\tc{z}{m_j}\|}{\|z\|}\right)^{l_j} \to 1$, $\left(\frac{r}{r_j}\right)^{l_j} \to 1$ and then 
$$\left|\frac{\sum_{\substack{i > m_j \\ i\in \N_1}} z(i)^{k_j} }{s_j^{k_j}}\right|\le \left(\frac{\|\tcm{z}{m_j}\|}{s_j}\right)^{k_j}\to 0. \qedhere$$
\end{proof}

By refining the construction in the previous lemma, we arrive at the promised answer to the question posed in \cite{DimLasMae}. Since we know that any two elements in $w^*-ac\{ \delta_{\tc{z}{n}+s e_{n+1}} : n\in \mathbb{N}\}$ belong to different Gleason parts, it remains to show that this subset of $\M_z$ has cardinality $2^{\mathfrak c}$.  The next proposition is the final step to prove that $\M_z$ intersects $2^{\mathfrak c}$ different Gleason parts.

\begin{prop}\label{prop:resultado-ppal-ellp}
   Let $1<p<\infty$, $z\in B_{\ell_p}$ and $s=\left(1-\|z\|^p\right)^{1/p}$. Then, the cardinality of
$w^*-ac\{ \delta_{\tc{z}{n}+s e_{n+1}} : n\in \mathbb{N}\}\subset \M_z$
is $2^{\mathfrak c}$.
\end{prop}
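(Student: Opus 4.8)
The plan is to extract from $\big(\delta_{\,\tc{z}{n}+se_{n+1}}\big)_{n\in\N}$ a subsequence that is \emph{interpolating} in the sense of Section~2; everything else is then immediate from the general facts recalled there. Abbreviate $x_n:=\tc{z}{n}+se_{n+1}\in\overline B_{\ell_p}$ (recall that $\|x_n\|\le 1$ and that the inclusion $w^*-ac\{\delta_{x_n}:n\in\N\}\subset\M_z$ was already observed in the proof of Proposition~\ref{prop:ele-p-no-racional}). If $\big(\delta_{x_{n_k}}\big)_k$ is interpolating, then its $w^*$-closure is identified with $\beta(\N)$ and hence has cardinality $2^{\mathfrak c}$; deleting the countably many points $\delta_{x_{n_k}}$ leaves $2^{\mathfrak c}$ points, each of which is a $w^*$-accumulation point of $(\delta_{x_{n_k}})_k$, hence of $(\delta_{x_n})_n$. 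Since $\ell_p$ is separable we have $|\M(\A_u(B_{\ell_p}))|\le 2^{\mathfrak c}$, so this forces $\big|\,w^*-ac\{\delta_{x_n}:n\in\N\}\,\big|=2^{\mathfrak c}$.

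To build the subsequence I would reuse diagonal polynomials of the type already used above. Fix an integer $d\ge p$. Since $z\in\ell_p$ gives $z(m)\to 0$, I would pick indices $n_1<n_2<\cdots$ with $\beta_k:=\big(z(n_k+1)/s\big)^{d}\le 2^{-k-1}$, so that $\sum_k\beta_k\le\frac12$, and set $g_k:=\big(e'_{n_k+1}/s\big)^{d}\in\mathcal P(^d\ell_p)$. Since $x_{n_j}$ is supported on $\{1,\dots,n_j+1\}$, with $(n_k+1)$-th coordinate equal to $0$, $s$, or $z(n_k+1)$ according as $k>j$, $k=j$, or $k<j$, one gets $g_k(x_{n_j})=0$ for $k>j$, $g_k(x_{n_j})=1$ for $k=j$, and $g_k(x_{n_j})=\beta_k$ for $k<j$. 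Then, given $(\alpha_k)\in\ell_\infty$, I would solve the resulting lower-triangular system recursively: $c_1:=\alpha_1$ and $c_j:=\alpha_j-\sum_{k<j}c_k\beta_k$; an easy induction using $\sum_k\beta_k\le\frac12$ shows $\sup_k|c_k|\le 2\sup_j|\alpha_j|$, so $(c_k)\in\ell_\infty$. Because $d\ge p$ and the indices $n_k+1$ are pairwise distinct — so $\sum_k|x(n_k+1)|^{d}\le\sum_k|x(n_k+1)|^p\le\|x\|_p^p$ on $\overline B_{\ell_p}$ — the series $f:=\sum_k c_k g_k=s^{-d}\sum_k c_k(e'_{n_k+1})^{d}$ defines an element of $\mathcal P(^d\ell_p)\subset\A_u(B_{\ell_p})$, exactly as the polynomials $Q$ in Propositions~\ref{prop:distinta-gleason} and~\ref{prop:ele-p-no-racional} do; and the finite sum $f(x_{n_j})=c_j+\sum_{k<j}c_k\beta_k=\alpha_j$ shows $\delta_{x_{n_j}}(f)=\alpha_j$ for every $j$. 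Hence $(\delta_{x_{n_k}})_k$ is interpolating, which is all that is needed.

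The only genuinely delicate step is the choice of the subsequence: it must be sparsified enough that the "tail contributions'' $\beta_k=(z(n_k+1)/s)^{d}$ sum to something $<1$, which is precisely what guarantees a bounded solution of the triangular system for $(c_k)$. This is where the decay $z(m)\to 0$ enters, and it is also the reason to pass to a subsequence rather than argue with the whole sequence (for which $\sum_n(z(n+1)/s)^{d}=s^{-d}\|z\|_p^p$ need not be $<1$). Everything else — the coordinate bookkeeping for $g_k(x_{n_j})$, the induction bounding $(c_k)$, and the membership $f\in\mathcal P(^d\ell_p)$ — is routine.
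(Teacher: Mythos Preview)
Your argument is correct. One small quibble: the $\beta_k=(z(n_k+1)/s)^d$ are complex, so you should write $|\beta_k|\le 2^{-k-1}$ and carry absolute values through the induction; this changes nothing.

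The paper's route is genuinely different. Instead of showing that a subsequence of the \emph{evaluations} $\delta_{x_n}$ is interpolating, the paper partitions $\N=\bigcup_u\N_u$ into infinitely many infinite pieces, picks for each $u$ a $w^*$-accumulation point $\varphi^u$ of $\{\delta_{x_n}:n+1\in\N_u\}$, and proves that the sequence $(\varphi^u)_u$ of \emph{accumulation points} is interpolating. The interpolating function is built as $S=r_j^{-l_j}s_j^{-k_j}R_j\sum_u \tfrac{b(u)}{t^u_j}V_j^u$, where $R_j,Q_j^u$ are the products used in Proposition~\ref{prop:ele-p-no-racional} (depending on the Dirichlet parameters $l_j,k_j$) and $V_j^u(x)=Q_j^u(x-z)$ are translations introduced to kill the cross-terms; one then checks $\varphi^u(S)=b(u)$ by delicate estimates.

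Your approach is considerably more elementary: one diagonal polynomial $f=s^{-d}\sum_k c_k(e'_{n_k+1})^d$ of fixed degree, a lower-triangular linear system, and the observation that sparsifying the indices makes the off-diagonal entries summable. For the bare statement of Proposition~\ref{prop:resultado-ppal-ellp} this is all that is needed. The paper's heavier machinery pays off elsewhere: the same polynomials $R_jQ_j^u$ are reused in Lemma~\ref{lem:distintos-gleason} and Proposition~\ref{prop:gleason-no-intersec} to separate Gleason parts and to confine them to $\M_z$, and the fact that \emph{any} choice of $\varphi^u$ yields an interpolating sequence is a stronger intermediate conclusion than yours. But as a proof of this proposition alone, your argument is shorter and cleaner.
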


\begin{proof}

Let us write the natural numbers as a disjoint union of infinite sets
$\N = \bigcup_{u=1}^\infty \N_u.$
 For each $u\in\N$ take
$$\varphi^u\in w^*-ac\{ \delta_{\tc{z}{n}+s e_{n+1}} : n+1\in \mathbb{N}_u\}.$$
Our goal is to show that the sequence $\{\varphi^u\}$ is an interpolating sequence. For this, given $b\in \ell_\infty$ we will  construct a polynomial $S$ such that $\varphi^u(S)=b(u)$. 
 Take $m_j$, $k_j$, $l_j$ , $z'_{m_j}$ and $R_j\in\mathcal P(^{l_j}\ell_p)$ as in Lemma \ref{lem:distintos-gleason},  and for each $u$ define $Q_j^u\in\mathcal P(^{k_j}\ell_p)$ similarly as $Q_j^1$ was defined: $$Q_j^u(x) = \displaystyle{\sum_{\substack{i > m_j \\ i\in \N_u}}} (e_i'(x))^{k_j}.$$
We also define a \emph{translation} of $Q_j^u$:
$$V_j^u(x):=Q_j^u(x-z).$$
Since $Q_j^u$ does not depend on the first $m_j$ coordinates,  we have 
$$V_j^u(x)=Q_j^u(x- \tcm{z}{m_j})$$ 
and then, for any $x\in B_{\ell_p}$,
\begin{eqnarray*}
    |V_j^u(x)-Q_j^u(x)|&=& |Q_j^u(x-\tcm{z}{m_j})- Q_j^u(x)| \\
    &\leq & 2^{k_j}L(k_j) \| \tcm{z}{m_j}\|,\
\end{eqnarray*}
 where $L(k_j)$ is the Lipschitz constant from \eqref{eqn:Lipschitz}. 
At this point note that we may assume that $m_j$ grows fast enough to satisfy
\begin{equation}\label{eqn:condicion-extra-mj}
    \frac{\|V_j^u-Q_j^u\|}{s_j^{k_j}}\leq \frac{2^{k_j}L(k_j)}{s_j^{k_j}}\| \tcm{z}{m_j}\|\underset{j\to\infty}{\longrightarrow} 0.
\end{equation}
As before, we have $\varphi^u(r_j^{-l_j}s_j^{-k_j}{ R_j Q_j^u})$ converges to $1$.  Let us see that the convergence is uniform on $u$. Indeed,
\begin{eqnarray}
  \nonumber 1 &\ge& \Bigg|\varphi^u\left(\frac{ R_j Q_j^u}{r_j^{l_j}s_j^{k_j}}\right)\Bigg| = \left(\frac{\|\tc{z}{m_j}\|}{\|z\|}\right)^{l_j}  \left(\frac{r}{r_j}\right)^{l_j}  \Bigg| \frac{s^{k_j}+ \sum_{ \substack{i > m_j \\ i\in \N_u}} z(i)^{k_j} }{s_j^{k_j}}\Bigg|\\
   &\geq & \left(\frac{\|\tc{z}{m_j}\|}{\|z\|}\right)^{l_j}  \left(\frac{r}{r_j}\right)^{l_j}  \left(  \frac{s^{k_j} -\sum_{i>m_j} |z(i)|^{k_j} }{s_j^{k_j}}\right)  \underset{j\to\infty}{\longrightarrow} 1.  \label{eq:uniforme}
\end{eqnarray}

We now define $P^u_j(x)={r_j^{-l_j}s_j^{-k_j}} { R_j(x) V_j^u(x)}$  and show that the numbers $t^u_j:=\varphi^u\left(P^u_j\right)$ are uniformly away from $0$ for $j$ large enough:
\begin{eqnarray*}
    \left| \varphi^u\left(P^u_j\right) \right| &\geq& \Bigg|\varphi^u\left(\frac{ R_j Q_j^u}{r_j^{l_j}s_j^{k_j}}\right)\Bigg| - \Bigg|\varphi^u\left(\frac{ R_j }{r_j^{l_j}s_j^{k_j} }(V_j^u-Q_j^u) \right)\Bigg| \\
    &\ge &  \Bigg|\varphi^u\left(\frac{ R_j Q_j^u}{r_j^{l_j}s_j^{k_j}}\right)\Bigg| - \left(\frac{\|\tc{z}{m_j}\|}{\|z\|}\right)^{l_j}  \left(\frac{r}{r_j}\right)^{l_j} \frac{2^{k_j}L(k_j)}{s_j^{k_j}}\| \tcm{z}{m_j}\|.
\end{eqnarray*}
The last expression converges to $1$
uniformly on $u$ by  \eqref{eqn:condicion-extra-mj} and \eqref{eq:uniforme}.  
Then, we can fix  $j$ such that 
$$|t^u_j|=\left|\varphi^u\left(P^u_j\right)\right|\geq \frac{1}{2}$$
for all $u$ and define, for $x\in \ell_p$,
$$ S(x)= \sum_{u=1}^\infty  \frac{b(u)}{t^u_j} P^u_j(x)=\frac 1 {r_j^{l_j}s_j^{k_j}} { R_j(x) }   \sum_{u=1}^\infty  \frac{b(u)}{t^u_j}  V_j^u(x).$$  Given that the polynomials  $Q^u_j$ have disjoint support and degree greater than or equal to $p$, it is easy to see that  
$\sum_{u=1}^\infty  \frac{b(u)}{t^u_j}  Q_j^u(x)$
is continuous with norm not greater than $2\|b\|_\infty$. Since $V_j^u$ is a translation of $Q_j^u$, the polynomial  $S$ is well defined and continuous.

We need to see that  $\varphi^u(S)=b(u)$ for each $u$. To avoid further complications in an already cumbersome notation, we do it for $u=1$. By definition of $t_j^1$, we have 
$$\varphi^1\left(\frac{b(1)}{t_j^1} P_j^1\right) =t_j^1 \frac{b(1)}{t_j^1} =b(1).$$
It remains to show that
$$\varphi^1 \left(\sum_{u>1}^\infty \frac{b(u)}{t^u_j} P^u_j\right)=0.$$ Note that the series does not converge in norm, so we cannot simply swap $\varphi^1$ and the sum. 
Recalling that $\varphi^1\in w^*-ac\{ \delta_{\tc{z}{n}+se_{n+1}} : n+1\in \mathbb{N}_1\}$ and that for $u>1$ the polynomials $Q^u_j$ do not depend on the coordinates in $\mathbb{N}_1$, the result follows from the computation
\begin{eqnarray*}
    \Bigg|\delta_{\tc{z}{n}+se_{n+1}}\left(\sum_{u>1}^\infty \frac{b(u)}{t^u_j}V_j^u\right)\Bigg| &=&  \Bigg|\sum_{u>1}^\infty \frac{b(u)}{t^u_j} V_j^u(\tc{z}{n}+s e_{n+1})\Bigg| \\
    &=& \Bigg|\sum_{u>1}^\infty \frac{b(u)}{t^u_j}Q_j^u(\tc{z}{n}-z+s e_{n+1})\Bigg|\\
    &=& \Bigg|\sum_{u>1}^\infty \frac{b(u)}{t^u_j}\displaystyle{\sum_{\substack{i > m_j \\ i\in \N_u\\ i>n}}} (-z(i))^{k_j}\Bigg|\leq 2\|b\|_\infty\sum_{i>n} |z(i)|^{k_j} \underset{n\to\infty}{\longrightarrow} 0.\
\end{eqnarray*}
Then, 
\begin{equation*}
\varphi^1 \left(\sum_{u>1}^\infty \frac{b(u)}{t^u_j} P^u_j\right)=\varphi^1 \left(\sum_{u>1}^\infty \frac{b(u)}{t^u_j} \frac{ R_j V_j^u}{r_j^{l_j}s_j^{k_j}} \right)=\frac{ \varphi^1(R_j)}{r_j^{l_j}s_j^{k_j}} \varphi^1 \left( \sum_{u>1}^\infty \frac{b(u)}{t^u_j} V_j^u \right)=0.
\end{equation*}

Thus,  $\{\varphi^u\}$ is an interpolating sequence and then its closure is homeomorphic to $\beta(\N)$, the Stone–\v{C}ech compactification of the natural numbers. In particular, its cardinality is $2^\mathfrak{c}$.
\end{proof}

It remains to prove the final assertion in Theorem \ref{thm:resultado-ppal-ellp}: namely, that all the Gleason parts constructed in Lemma \ref{lem:distintos-gleason} and Proposition \ref{prop:resultado-ppal-ellp} are contained in the fiber over $z$. First, we need the following.

\begin{lem}\label{lem:existe_m}
    Let $1< p<\infty$. Given $\omega\in B_{\ell_p}\setminus\{0\}$ and $\eta >0$ there is $m=m(\eta,\omega)$ such that for any $\psi \in \M_\omega$, $l\in \N$, $k \geq p$, and any norm-one $k$-homogeneous polynomial $Q$ that does not depend on the first $m$ variables, we have
    $$|\psi(Q) |\leq \left(\frac{l^lk^k}{(l+k)^{l+k}}\right)^{\frac{1}{p}} \frac{(1+\eta)^l}{\|\omega\|^l}.$$
\end{lem}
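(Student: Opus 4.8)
The plan is to combine the multiplicativity of $\psi$ with the norm estimate from Lemma~\ref{lem:Pinasco} and a perturbation argument via the Lipschitz bound \eqref{eqn:Lipschitz}. Fix $\omega\in B_{\ell_p}\setminus\{0\}$ and $\eta>0$. Since $\psi\in\M_\omega$ we have $\psi|_{X'}=\omega$, and the natural candidate to pair with $Q$ is a high power of a norming functional for $\omega$. Choose $\omega'\in S_{\ell_{p'}}$ with $\omega'(\omega)=\|\omega\|$, and for a parameter $m$ (to be fixed) replace it by the truncation $\tc{(\omega')}{m}$; because $\|\tc{(\omega')}{m}\|_{p'}\to 1$ and $\tc{(\omega')}{m}(\omega)\to\|\omega\|$ as $m\to\infty$, we can pick $m=m(\eta,\omega)$ so that, writing $a_m:=\tc{(\omega')}{m}$, we have $\|a_m\|_{p'}\le 1$ and $a_m(\omega)\ge \|\omega\|/(1+\eta)^{1/2}$, say; the key point is that $m$ depends only on $\eta$ and $\omega$, not on $l,k,Q$.

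Next I would run the standard product argument. Given $l\in\N$, $k\ge p$ and a norm-one $k$-homogeneous polynomial $Q$ not depending on the first $m$ variables, set $R(x)=a_m(x)^l\in\mathcal P(^l\ell_p)$, which has $\|R\|=\|a_m\|_{p'}^l\le 1$ and depends only on the first $m$ coordinates. Since $R$ and $Q$ depend on disjoint sets of coordinates, Lemma~\ref{lem:Pinasco} gives
\[
\|RQ\| \le \left(\frac{l^lk^k}{(l+k)^{l+k}}\right)^{1/p}\|R\|\,\|Q\| \le \left(\frac{l^lk^k}{(l+k)^{l+k}}\right)^{1/p}.
\]
Applying $\psi$ and using multiplicativity, $|\psi(R)|\,|\psi(Q)| = |\psi(RQ)| \le \|RQ\|$. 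Now $R\in X'$-powers: $\psi(R)=\psi(a_m)^l = a_m(\omega)^l \ge \|\omega\|^l/(1+\eta)^{l/2}$ (using $a_m(\omega)>0$, which we may arrange since we only need the modulus and can rotate $\omega'$; alternatively work with $|\psi(R)|=|a_m(\omega)|^l$). Dividing,
\[
|\psi(Q)| \le \left(\frac{l^lk^k}{(l+k)^{l+k}}\right)^{1/p}\frac{(1+\eta)^{l/2}}{\|\omega\|^l}\le \left(\frac{l^lk^k}{(l+k)^{l+k}}\right)^{1/p}\frac{(1+\eta)^{l}}{\|\omega\|^l},
\]
which is the claimed bound.

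The one genuinely delicate point — and the main obstacle — is making the dependence of $m$ on $(\eta,\omega)$ genuinely uniform in $l$: the factor $a_m(\omega)^{-l}=(a_m(\omega)/\|\omega\|)^{-l}\|\omega\|^{-l}$ carries an $l$-th power, so a fixed multiplicative error $1+\eta'$ in $a_m(\omega)/\|\omega\|$ blows up to $(1+\eta')^l$. This is precisely why the statement is phrased with $(1+\eta)^l$ rather than $1+\eta$: it suffices to choose $m$ so that $\|\omega\|/a_m(\omega)\le 1+\eta$ (equivalently $a_m(\omega)\ge \|\omega\|/(1+\eta)$), which is possible for $m=m(\eta,\omega)$ large because $a_m(\omega)=\tc{(\omega')}{m}(\omega)=\sum_{i\le m}\omega'(i)\omega(i)\to \omega'(\omega)=\|\omega\|$; the $\|R\|\le 1$ bound absorbs the simultaneous deterioration of $\|a_m\|_{p'}$ harmlessly since it only helps. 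Once $m$ is chosen this way, raising to the $l$-th power gives exactly $(1+\eta)^l/\|\omega\|^l$ and the argument closes with no further conditions on $l$, $k$, or $Q$.
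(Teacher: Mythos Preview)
Your proof is correct and follows essentially the same approach as the paper: both arguments take a linear functional supported on the first $m$ coordinates that nearly norms $\omega$, raise it to the $l$-th power to form $R$, apply Lemma~\ref{lem:Pinasco} to the product $RQ$, and divide by $|\psi(R)|=|a_m(\omega)|^l$. The only cosmetic difference is that the paper truncates $\omega$ and then chooses a norm-one functional $\omega'_m\in S_{\ell_{p'}}$ norming $\tc{\omega}{m}$ (so $\|R\|=1$ exactly), whereas you choose a norming functional for $\omega$ and truncate it (so $\|R\|\le 1$); both choices of $m$ reduce to the same condition $a_m(\omega)\ge \|\omega\|/(1+\eta)$, and your detour through $(1+\eta)^{1/2}$ is unnecessary but harmless.
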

\begin{proof}
    As before we write $\tc{\omega}{m} = (\omega(1), \omega(2), \ldots, \omega(m), 0, 0, \ldots)\in B_{\ell_p}$ and take  $\omega'_{m} = (\omega'_{m}(1), \ldots, \omega'_{m}({m}), 0, 0, \ldots) \in S_{\ell_{p'}} $ satisfying $\omega'_{m}(\tc{\omega}{m})=\|\tc{\omega}{m}\|$. We choose $m$  large enough so that  $\|\tc{\omega}{m}\|\geq {\|\omega\|}/{(1+\eta)}$ and consider the norm-one polynomial $R\in\mathcal P(^l\ell_p)$ given by $R(x)=(\omega'_{m}(x))^l$. For $\psi \in \M_\omega$ we have 
    $$\psi (R)=R(\omega)= \|\tc{\omega}{m}\|^l.$$

    Let  $P(x)=\left(\frac{(l+k)^{l+k}} {l^lk^k}\right)^{1/p}R(x)Q(x)\in\mathcal P(^{l+k}\ell_p)$. Appealing once again to Lemma \ref{lem:Pinasco}, we conclude that $\|P\|=1$. Then,
    $$
          \left(\frac{(l+k)^{l+k}} {l^lk^k}\right)^{\frac{1}{p}} \|\tc{\omega}{m}\|^l   |\psi(Q)|= |\psi(P)|\leq 1.
    $$
    Therefore,
    $$|\psi(Q)| \leq  \left(\frac{l^lk^k}{(l+k)^{l+k}} \right)^{\frac{1}{p}}\frac{1}{\|\tc{\omega}{m}\|^l}\leq \left(\frac{l^lk^k}{(l+k)^{l+k}} \right)^{\frac{1}{p}} \frac{(1+\eta)^l}{\|\omega\|^l}. \qedhere$$
\end{proof}
\begin{rem}\label{rem:unif-convex}
    From \cite[Thm. 2.1]{kim2014uniform}, if $X$ is a uniformly convex Banach space, for any $\varepsilon>0$ there is $\eta(\varepsilon)>0$ satisfying the following:   
 \begin{quotation}
        For $z_0, \omega\in S_X$ and $z_0'\in S_{X'}$ with $z_0'(z_0)=1$ such that $\|z_0 - \lambda \omega\| > \varepsilon $, for all $  |\lambda|=1$
      we have $|z_0'(\omega)| < 1-\eta$.
    \end{quotation}

\end{rem}

\begin{prop}\label{prop:gleason-no-intersec}
 Let $1<p<\infty$, $z\in B_{\ell_p}$ and $s=\left(1-\|z\|^p\right)^{1/p}$. Then, $\GP(\varphi)\subset\M_z$ for every $\varphi\in w^*-ac\{ \delta_{\tc{z}{n}+s e_{n+1}} : n\in \mathbb{N}\}$.
\end{prop}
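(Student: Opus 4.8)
## Proof proposal

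The plan is to show that if $\varphi \in w^*\text{-}ac\{\delta_{\pi_n(z)+se_{n+1}} : n \in \mathbb{N}\}$ and $\psi \in \GP(\varphi)$, then $\psi \in \M_z$, i.e.\ $\psi|_{\ell_{p'}} = z$. Since the target fiber is determined by the action on coordinate functionals, it suffices to show $\psi(e_i') = z(i)$ for every $i$. We already know $\varphi \in \M_z$ from Proposition \ref{prop:ele-p-no-racional}, so the strategy is to argue that membership in a common Gleason part forces $\psi$ and $\varphi$ to agree on $\ell_{p'}$.

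First I would record what $\psi \in \GP(\varphi)$ gives via the Stout-type criterion quoted after Lemma \ref{lem:redes_cerca}: any norm-one sequence on which $\varphi$ tends to $1$ must also drive $\psi$ to $1$. The polynomials $P_j$ built in Proposition \ref{prop:ele-p-no-racional} satisfy $\varphi(P_j)\to 1$, so $\psi(P_j)\to 1$ as well; in particular $\psi \notin \GP(\delta_0)$, hence $R(\psi)=1$ by Lemma \ref{lem:radio menor a 1}, and $\|\psi|_{\ell_{p'}}\| \le R(\psi) = 1$. Write $\omega = \psi|_{\ell_{p'}} \in \overline{B}_{\ell_p''} = \overline{B}_{\ell_p}$ (reflexive case). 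I would then aim to prove $\omega = z$ by first showing $\|\omega\| = \|z\| = r$ and then that $\omega$ points "in the same direction'' as $z$.

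The key tool for the norm and direction is Lemma \ref{lem:existe_m} combined with the uniform convexity estimate in Remark \ref{rem:unif-convex}. The idea: factor $P_j = r_j^{-l_j}s_j^{-k_j} R_j Q_j$ where $R_j$ lives on the first $m_j$ coordinates and $Q_j$ on coordinates $> m_j$. Applying $\psi$ and using multiplicativity, $\psi(P_j) = r_j^{-l_j}s_j^{-k_j}\,\psi(R_j)\,\psi(Q_j)$. Since $\psi(P_j)\to 1$ and $|\psi(R_j)| = |z'_{m_j}(\omega)|^{l_j} \le \|\omega\|^{l_j}$, while Lemma \ref{lem:existe_m} bounds $|\psi(Q_j)|$ (for $j$ with $m_j \ge m(\eta,\omega)$, provided $\omega\neq 0$) by roughly $\left(\frac{l_j^{l_j}k_j^{k_j}}{(l_j+k_j)^{l_j+k_j}}\right)^{1/p}\frac{(1+\eta)^{l_j}}{\|\omega\|^{l_j}}$, one plugs these in and uses $r_j^{l_j}s_j^{k_j} \approx \left(\frac{l_j^{l_j}k_j^{k_j}}{(l_j+k_j)^{l_j+k_j}}\right)^{1/p}$ from Lemma \ref{lem:Pinasco}; the $\|\omega\|$ powers cancel and letting $j\to\infty$, then $\eta\to 0$, forces $\omega\neq 0$ and pins down its norm. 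Then, to get the direction right, one replaces $R_j$ by the polynomial built from a functional $z'$ attached to $z$ itself (not just to $\omega$): evaluating the modified sequence $\widetilde P_j$ at $\psi$ one gets $|\psi(\widetilde P_j)| \lesssim |z'(\omega)/\|z\|\cdot \ldots|^{l_j}$-type factors, and since this must tend to $1$ with $l_j \to \infty$ one concludes $|z'(\omega)| = \|z\|\|\omega\|$, i.e.\ equality in H\"older, which by strict convexity of $\ell_p$ forces $\omega$ to be a positive multiple of $z$; combined with $\|\omega\|=\|z\|$ this gives $\omega = z$. (If $z=0$ the statement is part of Remark \ref{rmk: PG-fibras} and the whole fiber argument degenerates, so one handles $z\ne 0$.)

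The main obstacle I anticipate is handling the case $\omega = 0$ separately (Lemma \ref{lem:existe_m} requires $\omega \ne 0$), and more seriously, making the two-sided estimate tight enough that the competing exponential factors $(1+\eta)^{l_j}$, $(\|z\|/\|\omega\|)^{l_j}$, $(\|\pi_{m_j}(z)\|/\|z\|)^{l_j}$ and $(r/r_j)^{l_j}$ — all raised to the growing power $l_j$ — genuinely cancel. This is exactly why the Dirichlet-approximation choice of $(l_j,k_j)$ and the fast-growth choice of $(m_j)$ from Proposition \ref{prop:ele-p-no-racional} were made, and the delicate point is that $\eta$ must be sent to $0$ only \emph{after} $j\to\infty$, with $m(\eta,\omega)$ depending on $\eta$; so one needs a diagonal-type argument interleaving the choice of $\eta$, the threshold $m(\eta,\omega)$, and the subsequence of $j$'s. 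Once the norm and direction of $\omega$ are both controlled, concluding $\omega = z$ and hence $\psi \in \M_z$ is immediate, and since $\psi \in \GP(\varphi)$ was arbitrary we get $\GP(\varphi) \subset \M_z$.
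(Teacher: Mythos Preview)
Your overall strategy---use $\psi(P_j)\to 1$ together with Lemma~\ref{lem:existe_m} and uniform convexity---is the right circle of ideas and matches the paper's, but the execution has two genuine gaps.

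First, the ``norm'' step is vacuous. Plugging $|\psi(R_j)|\le\|\omega\|^{l_j}$ and the bound of Lemma~\ref{lem:existe_m} into $|\psi(P_j)|=r_j^{-l_j}s_j^{-k_j}|\psi(R_j)|\,|\psi(Q_j)|$ yields exactly $|\psi(P_j)|\le (1+\eta)^{l_j}$: as you yourself note, the factors $\|\omega\|^{l_j}$ cancel, and what remains is an inequality that holds trivially and carries no information about $\|\omega\|$ (or even about whether $\omega\ne 0$). The paper never attempts to determine $\|\omega\|$ on its own.

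Second, and more seriously, even granting the direction step, equality in H\"older in \emph{complex} $\ell_p$ only gives $\omega=\lambda z$ for some $\lambda\in\C$; strict convexity does not force $\lambda$ to be positive real (take $z=e_1$, $z'=e_1'$, $\omega=ie_1$). So your argument at best recovers what the paper calls the linearly independent case, but you have no mechanism to exclude $\omega=\lambda z$ with $\lambda\ne 1$. The paper handles this separately by introducing a new polynomial $S_j$, essentially $z'_{m_j}\cdot P_j$ renormalized to unit norm: a direct computation gives $\varphi(S_j)\to 1$ while $\psi(S_j)\to\lambda$, which forces $\lambda=1$ when $\psi\in\GP(\varphi)$. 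Nothing in your outline plays this role. Finally, the case $z=0$ is not covered by Remark~\ref{rmk: PG-fibras} (that remark says nothing about $\GP(\varphi)\subset\M_0$ for $\varphi$ an accumulation point of $\delta_{e_{n+1}}$); since $R_j$ cannot even be defined when $z=0$, the paper treats this case with a separate, direct polynomial argument.
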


\begin{proof}
    Let us see that $\psi \notin \GP(\varphi)$ for any $\psi \in \M_\omega$ with $z \neq \omega\in B_{\ell_p}$ (note that from Remark \ref{rmk: PG-fibras} this is obvious when $\|\omega\|=1$).  
    We split the proof into three cases. 
   First, we deal with the simplest one:  $z=0\neq \omega$. Note that here  $\varphi$ is a $w^*$-limit point of  $\delta_{e_n}$.
    We define $V_j(x) = \sum_{i>j} (e_i'(x))^k$ for some $k>p$ such that 
$$\left(\frac{k^k}{(k+1)^{k+1}}\right)^{\frac{1}{p}} \frac{2}{\|\omega\|}<1.$$ Then, for any $j$, $V_j$ is a norm-one polynomial satisfying $\varphi(V_j)= 1$.  But  Lemma \ref{lem:existe_m} (with $l=1$ and $\eta=1$) gives us that, if $j$ is large enough, then
$$|\psi(V_j)|\leq \left(\frac{k^k}{(k+1)^{k+1}}\right)^{\frac{1}{p}} \frac{2}{\|\omega\|} <1. $$ Then, $\psi(V_j)$ cannot converge to $1$, concluding that $\psi\not\in \GP(\varphi)$.

For the remaining steps, we use the polynomials $R_j$, $Q_j$, and $P_j$ introduced in the proof of Proposition \ref{prop:ele-p-no-racional}, so we recall their definitions. Denoting $r=\|z\|<1$, there are sequences $(k_j), (l_j)\subset\N$ satisfying 
 $\frac{l_j}{l_j+k_j}\underset{j\to\infty}{\rightarrow}r^p$, $(l_j+k_j)r^p-l_j\underset{j\to\infty}{\rightarrow} 0$ and
$k_j\ge p$ for all $j$. We set $R_j(x) = (z'_{m_j}(x))^{l_j}$ and $Q_j(x) = \sum_{i > m_j} (e_i'(x))^{k_j}$, whose normalized product  is the polynomial $P_j\in\mathcal P(^{l_j+k_j}\ell_p)$ given by  $$P_j(x)=\left(\frac{(l_j+k_j)^{l_j+k_j}}{l_j^{l_j}  k_j^{k_j}} 
 \right)^{\frac1p}R_j(x) Q_j(x).$$
 
We now consider the second case: $z\neq 0$ and $\omega = \lambda z$ for some $\lambda \in \mathbb{C}$. For $\psi \in \mathcal{GP}(\varphi)$ we show that if $\psi \in \mathcal{M}_\omega$, then necessarily $\lambda = 1$. Since $\{P_j\}$ are norm-one polynomials satisfying $\varphi(P_j) \rightarrow 1$ (check the proof of Proposition \ref{prop:ele-p-no-racional} for details), it follows that $\psi(P_j) \rightarrow 1$ as well. Now, we take, for each $j$, the  norm-one polynomial $S_j\in\mathcal P(^{l_j+k_j+1}\ell_p)$ given by 
\[
S_j(x) = \left( \frac{(l_j+k_j+1)^{l_j+k_j+1}}{(l_j+1)^{l_j+1}  k_j^{k_j}}\right)^{\frac1p}z_{m_j}'(x) R_j(x) Q_j(x).
\]
Note that $S_j$ is essentially  $z_{m_j}' P_j$, with the constant adjusted to have  unit norm. We will show that $\varphi(S_j) \rightarrow 1$, while $\psi(S_j) \rightarrow \lambda$, which implies that $\lambda = 1$, as desired. 

At this point, it is important to observe that if $r^p \notin \Q$, then $l_j, k_j \to \infty$. For $r^p=\frac{l}{l+k} \in \Q$, we can set $l_j = j \cdot l$ and $k_j = j \cdot k$ to also get $l_j, k_j \to \infty$. This observation allows us to proceed regardless of $r^p$ being rational or not.

Note that
\[
\varphi(S_j) = \left( \frac{(l_j+k_j+1)^{l_j+k_j}}{(l_j+k_j)^{l_j+k_j}} \frac{l_j^{l_j}}{(l_j+1)^{l_j}} \frac{l_j+k_j+1}{l_j+1} \right)^{\frac{1}{p}}\varphi(z_{m_j}') \varphi(P_j).
\]
Since $\varphi$ is in the fiber of $z$,  $\varphi(z_{m_j}')=z_{m_j}'(z) = \| \tc{z}{m_j} \|$, which converges to $\| z \| = r$. We already know that $\varphi(P_j)$ converges to $1$. Using the fact that $l_j,k_j\rightarrow \infty$ the first term inside the parenthesis converges to $e$ and the second to $e^{-1}$. Since $\frac{l_j}{l_j+k_j} \rightarrow r^p$, the last term inside the parenthesis converges to $r^{-p}$. All these imply that $\varphi(S_j) \rightarrow 1$. 

Similarly, for $\psi$, we have
\[
\psi(S_j) = \left( \frac{(l_j+k_j+1)^{l_j+k_j}}{(l_j+k_j)^{l_j+k_j}} \frac{l_j^{l_j}}{(l_j+1)^{l_j}} \frac{l_j+k_j+1}{l_j+1} \right)^{\frac{1}{p}}\psi(z_{m_j}') \psi(P_j).
\]
Given that $\psi$ is in the fiber of $\lambda z$, $\psi(z_{m_j}')=z_{m_j}'(\lambda z) = \lambda \| \tc{z}{m_j} \|$, which converges to $\lambda \| z \| = \lambda r$. Our assumption that $\psi \in \mathcal{GP}(\varphi)$ yields that $\psi(P_j)$ also converges to $1$. The parenthesis is the same as before and converges to $r^{-p}$, completing the proof for this case.

  Finally, we deal with the last case: $\omega$ and $z$ are linearly independent.   We know that    $\varphi(P_j)\rightarrow 1,$
    so we only need to show that $\psi(P_j)$ is away from $1$ for $j$ large enough.
Given that $\psi\in \M_\omega$, we know that $\psi(R_j)=R_j(\omega)=z'_{m_j}(\omega)^{l_j}$. Since $z$ and $\omega$ are linearly independent, there exists $\varepsilon > 0$ such that 
 for every $\lambda\in \C$ with $|\lambda|=1$,  
\begin{equation*}
    \left\| \frac{z}{\|z\|} - \lambda \frac{\omega}{\|\omega\|} \right\| > 2\varepsilon.
\end{equation*}
    
    Given that $\tc{z}{m_j} \rightarrow z$, there is $j_0$ such that  for any $j>j_0$ we have

\begin{equation}\label{eqn:uniform conv}
    \left\| \frac{\tc{z}{m_j}}{\|\tc{z}{m_j}\|} - \lambda \frac{\omega}{\|\omega\|} \right\| > \varepsilon,
\end{equation}
for all $\lambda\in \C$ with $|\lambda|=1$.

Using that $z'_{m_j}$ is the unique element in $S_{\ell_{p'}}$ satisfying $z'_{m_j}(\tc{z}{m_j}) = \|\tc{z}{m_j}\|$, by Remark \ref{rem:unif-convex} and  \eqref{eqn:uniform conv},  there exists $\eta=\eta(\varepsilon) > 0$ such that 
\begin{equation}\label{eqn:lejos de w}
    |z'_{m_j}(\omega)| \leq \|\omega\| (1 - \eta)\,\,\,\forall j > j_0.
\end{equation}

    By Lemma \ref{lem:existe_m}, eventually replacing $j_0$ by a larger number, we may assume that 
\begin{equation}\label{eqn:aplic del lema}
    |\psi(Q_j)| \leq \left(\frac{l_j^{l_j}k_j^{k_j}}{(l_j+k_j)^{l_j+k_j}}\right)^{\frac{1}{p}}  \frac{(1+\eta)^{l_j}}{\|\omega\|^{l_j}}\,\,\,\text{for all } j > j_0.
\end{equation}
Combining \eqref{eqn:lejos de w} and \eqref{eqn:aplic del lema} we obtain the desired conclusion: if $j>j_0$,
\begin{eqnarray*}
|\psi(P_j)|&=& \Bigg|\left(\frac{(l_j+k_j)^{l_j+k_j}}{l_j^{l_j}k_j^{k_j}}\right)^{\frac{1}{p}}  \psi(R_j) \psi(Q_j) \Bigg| 
         \leq \frac{(1+\eta)^{l_j}}{\|\omega\|^{l_j}} \|\omega\|^{l_j} (1-\eta)^{l_j}\\
         &=&  (1+\eta)^{l_j}(1-\eta)^{l_j} 
         =(1-\eta^2)^{l_j} \leq 1-\eta^2. \qedhere
\end{eqnarray*}
\end{proof}

Finally, we show that each $\varphi \in w^*-ac\{\delta_{\tc{z}{n} + s e_{n+1}} : n \in \mathbb{N}\}$ is the only element in $\GP(\varphi)\cap \overline{\{\delta_x: x\in B_{\ell_p}\}}^{w^*}$ (i.e., $\varphi$ is the only element in $\GP(\varphi)$ outside the corona).
 The sketch of the argument is the following: suppose that $\psi = w^*-\lim_\beta \delta_{z + x_\beta}$ belongs to $\GP(\varphi)$. Then, for sufficiently large $\beta$, the polynomials of the form $\sum_{i > m_j} e_i'(x)^k$ should \emph{almost} attain their norms at $\frac{x_\beta}{\|x_\beta\|}$. This, in turn, forces $x_\beta$ to be close to a multiple of a canonical vector, giving that $\psi \in w^*-ac \{\delta_{\tc{z}{n} + s e_{n+1}} : n \in \mathbb{N}\}$. Consequently, Lemma \ref{lem:distintos-gleason} yields $\psi = \varphi$. To ensure clarity, the proof is divided into several parts, as it involves various technical details.

We begin with the following lemmas that give us information about the norming sets for polynomials of the form $\sum_{i \in I} e_i'(x)^k=\sum_{i \in I} x(i)^k$.

\begin{lem} \label{lem: z casi canonico}
    Let $1<p<\infty$. Given $\varepsilon>0$ there is $\delta=\delta(p,\varepsilon)$  such that, if $z\in \overline{B}_{\ell_p}$ satisfies \
    $$\sum_{i\in \N} |z(i)|^k>1-\delta$$ for some $k>p$, then $\|z - e_\mathfrak{m} \|< \varepsilon$, where $\mathfrak m$ satisfies $|z(\mathfrak{m})|=\max_{i\in \N}|z(i)|$. 
\end{lem}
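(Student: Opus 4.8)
The plan is to argue by contradiction and compactness, reducing to a clean scalar inequality. Suppose the statement fails: there is $\varepsilon_0>0$ and, for every $n$, a vector $z_n\in\overline B_{\ell_p}$ and an exponent $k_n>p$ with $\sum_{i}|z_n(i)|^{k_n}>1-\tfrac1n$ but $\|z_n-e_{\mathfrak m_n}\|\ge\varepsilon_0$, where $\mathfrak m_n$ is an index where $|z_n(i)|$ is maximal. First I would normalise: since $\sum_i|z_n(i)|^{k_n}\le \sum_i|z_n(i)|^p\le 1$ (using $|z_n(i)|\le 1$ and $k_n>p$), we get $\sum_i|z_n(i)|^p>1-\tfrac1n$ as well, so in fact $\|z_n\|_p\to 1$. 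Set $a_n=|z_n(\mathfrak m_n)|=\max_i|z_n(i)|$. The key scalar observation is that
\[
1-\tfrac1n<\sum_i|z_n(i)|^{k_n}=\sum_i|z_n(i)|^p\,|z_n(i)|^{k_n-p}\le a_n^{\,k_n-p}\sum_i|z_n(i)|^p\le a_n^{\,k_n-p},
\]
so $a_n^{\,k_n-p}\to 1$.

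Next I would extract the information this gives about the ``tail'' mass away from the maximal coordinate. Write $b_n^p=\sum_{i\ne\mathfrak m_n}|z_n(i)|^p=\|z_n\|_p^p-a_n^p$. For the coordinates $i\ne\mathfrak m_n$ we have $|z_n(i)|\le a_n$, hence $|z_n(i)|^{k_n}\le a_n^{k_n-p}|z_n(i)|^p$, and therefore
\[
\sum_i|z_n(i)|^{k_n}\le a_n^{k_n}+a_n^{\,k_n-p}\,b_n^p .
\]
Combined with $\sum_i|z_n(i)|^{k_n}>1-\tfrac1n$ and $a_n^{k_n}\le a_n^p\le 1$ this forces, after using $a_n^{k_n-p}\le 1$, the bound $a_n^{k_n}+b_n^p>1-\tfrac1n$; since $a_n^p+b_n^p\le 1$ and $a_n^{k_n}\le a_n^p$, we deduce $a_n^p-a_n^{k_n}<\tfrac1n$, i.e.\ $a_n^{k_n}\to 1$, hence $a_n\to 1$ and consequently $b_n\to 0$ and $\|z_n\|_p\to1$. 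So $|z_n(\mathfrak m_n)|\to 1$ and $\sum_{i\ne\mathfrak m_n}|z_n(i)|^p\to 0$.

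Finally I would put the pieces together and control the phase. We now know $|z_n(\mathfrak m_n)|\to1$ and $\sum_{i\ne\mathfrak m_n}|z_n(i)|^p\to0$; to conclude $\|z_n-e_{\mathfrak m_n}\|\to 0$ it remains to see that the phase of $z_n(\mathfrak m_n)$ tends to $1$ — but this is where one must be slightly careful, since $e_{\mathfrak m}$ in the statement presumably means the canonical vector and the conclusion as stated would be false if $z_n(\mathfrak m_n)$ could be, say, $-1$. The natural fix, and what I expect the authors intend, is either to read the statement with $e_{\mathfrak m}$ replaced by a unimodular multiple $\lambda e_{\mathfrak m}$ (which is all that is needed in the application to Proposition \ref{prop:nocorona}, where one only wants $x_\beta$ close to \emph{a multiple} of a canonical vector), or to assume a normalisation $z_n(\mathfrak m_n)>0$; under either reading the estimate $\|z_n-\lambda_n e_{\mathfrak m_n}\|_p^p=|z_n(\mathfrak m_n)-\lambda_n|^p+b_n^p\to0$ with $\lambda_n=z_n(\mathfrak m_n)/|z_n(\mathfrak m_n)|$ finishes the argument, contradicting $\|z_n-\lambda_n e_{\mathfrak m_n}\|\ge\varepsilon_0$. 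The main obstacle is thus not really analytic depth but bookkeeping: organising the chain $a_n^{k_n-p}\to1$, $b_n\to0$, $a_n\to1$ cleanly, and handling the phase/sign normalisation so that the quantitative $\delta=\delta(p,\varepsilon)$ is genuinely produced (tracking constants through the contradiction argument, or equivalently running it directly: given $\varepsilon$, choosing $\delta$ so small that $1-\delta<\sum|z(i)|^k$ forces first $a^{k-p}>1-\delta$, then $b^p<$ something like $2\delta$, then $|a-\lambda|^p<$ something like $3\delta$, and finally $\|z-\lambda e_{\mathfrak m}\|^p<5\delta<\varepsilon^p$).
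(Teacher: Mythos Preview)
Your core inequality $\sum_i |z(i)|^k \le a^{k-p}\sum_i|z(i)|^p \le a^{k-p}$ (with $a=\max_i|z(i)|$) is exactly the one the paper uses. The paper runs the argument directly rather than by contradiction: it sets $k_0=[p]+1$, chooses $\delta$ so that $x^{k_0-p}>1-\delta$ forces $|1-x|^p+1-x^p<\varepsilon^p$, observes $a^{k-p}\le a^{k_0-p}$, and concludes.

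There is a genuine gap in your chain. From $a_n^p-a_n^{k_n}<1/n$ you write ``i.e.\ $a_n^{k_n}\to 1$'', but this does not follow: it only says $a_n^{k_n}$ and $a_n^p$ are close, and you would still need $a_n^p\to 1$, which is precisely what you are after. The missing ingredient is that $k$ is (implicitly) an \emph{integer} greater than $p$, so that $k-p\ge k_0-p>0$ with $k_0=[p]+1$; then $a^{k_0-p}\ge a^{k-p}>1-\delta$ immediately forces $a$ close to $1$, after which $b^p\le 1-a^p$ is small and your argument goes through. Without a uniform lower bound on $k-p$ the lemma is actually false: for $p=2$, take $z=(n^{-1/2},\dots,n^{-1/2},0,\dots)$ with $n$ equal entries and $k_n\searrow 2$ fast enough that $n^{1-k_n/2}\to 1$; then $\sum_i|z(i)|^{k_n}\to 1$ while $\|z-e_1\|_2\to\sqrt 2$. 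The paper builds the integer hypothesis in through $k_0$, and this is how the lemma is used downstream (the $k_j$ in Lemma~\ref{lem:cambiar por canonicos} are integers).

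Your remark on the phase is correct and well spotted: as literally stated the conclusion fails already for $z=-e_1$, and the paper's own final line tacitly passes from $|1-z(\mathfrak m)|$ to $|1-|z(\mathfrak m)||$. Your proposed fix (replace $e_{\mathfrak m}$ by a unimodular multiple, or normalise so that $z(\mathfrak m)>0$) is the natural one.
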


\begin{proof}
    Let $k_0 = [p]+1$ be the smallest integer strictly larger than $p$.  Take $\delta>0$ such that, if $(1-\delta) <x^{k_0-p}\leq 1$, then 
\begin{equation}\label{eqn:delta}
        |1-x|^p+1-x^p< \varepsilon^p.
    \end{equation}
    Next, suppose that
\begin{eqnarray*}
  1-\delta &<&  \sum_{i\in \N} |z(i)|^k=\sum_{i\in \N} |z(i)|^p|z(i)|^{k-p} \\
  &\leq& \max_{i\in \N}|z(i)|^{k-p} \sum_{i\in \N} |z(i)|^p \le\max_{i\in \N}|z(i)|^{k-p}.  \
\end{eqnarray*}
    Then, taking  $\mathfrak{m}$ such that $|z(\mathfrak{m})|=\max_{i\in \N}|z(i)|$ we  have $1-\delta<|z(\mathfrak{m})|^{k-p}\leq |z(\mathfrak{m})|^{k_0-p}$.  Combining this with \eqref{eqn:delta} we get
    $$\|z-e_\mathfrak{m} \|^p= |z(\mathfrak{m})-1|^p + \sum_{i\neq \mathfrak{m}} |z(i)|^p\le |1-z(\mathfrak{m})|^p + 1-|z(\mathfrak{m})|^p <\varepsilon^p,$$ 
    as we wanted to see.
\end{proof}
Note that in the previous lemma, the choice of $\mathfrak{m}$ is not necessarily unique in general. However, for small values of $\varepsilon$ (namely, for  $\varepsilon< 2^{-1+1/p}$),  any  $z$ satisfying the hypotheses must attain its maximum at a single coordinate. 

\begin{lem}\label{lem:cambiar por canonicos}
    Let $\{x_\beta\}_\beta\subset \overline{B}_{\ell_p}$ be a net such  
    $$\lim_{j\to\infty} \lim_\beta \sum_{i\in I_j} x_\beta(i)^{k_j} =1,$$
   where $\{k_j\}$ is a sequence of integers with $k_j>p$ and $I_j$ are subsets of $\N$. Then, if we take $\mathfrak{m}_\beta$ for which $|x_\beta(\mathfrak{m}_\beta)|=\max|x_\beta(i)|$, we have 
    $$\lim_\beta\| x_\beta - e_{\mathfrak{m}_\beta} \|= 0.$$
\end{lem}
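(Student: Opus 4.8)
The plan is to reduce the claim about the net $\{x_\beta\}$ to a coordinatewise application of Lemma \ref{lem: z casi canonico}. Fix $\varepsilon>0$ and let $\delta=\delta(p,\varepsilon/2)$ be the constant produced by Lemma \ref{lem: z casi canonico} (applied with $\varepsilon/2$ in place of $\varepsilon$). By hypothesis, $\lim_{j}\lim_\beta \sum_{i\in I_j} x_\beta(i)^{k_j}=1$, so first I would choose $j_0$ such that $\lim_\beta \sum_{i\in I_{j_0}} x_\beta(i)^{k_{j_0}} > 1-\delta/2$, and then choose $\beta_0$ such that for all $\beta\ge\beta_0$ we have $\sum_{i\in I_{j_0}} x_\beta(i)^{k_{j_0}} > 1-\delta$. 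Since $k_{j_0}>p$ and $\sum_{i\in\N}|x_\beta(i)|^{k_{j_0}}\ge \sum_{i\in I_{j_0}} x_\beta(i)^{k_{j_0}}>1-\delta$ for $\beta\ge\beta_0$, Lemma \ref{lem: z casi canonico} gives $\|x_\beta - e_{\mathfrak m_\beta}\|<\varepsilon/2<\varepsilon$, where $\mathfrak m_\beta$ realizes the maximum $|x_\beta(\mathfrak m_\beta)|=\max_i|x_\beta(i)|$. As $\varepsilon>0$ was arbitrary, $\lim_\beta \|x_\beta - e_{\mathfrak m_\beta}\|=0$.

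One point that needs a little care is the legitimacy of the two successive limit extractions: the outer limit over $j$ is an ordinary sequential limit of real numbers $a_j:=\lim_\beta \sum_{i\in I_j} x_\beta(i)^{k_j}$, each of which exists by hypothesis (the inner limits are assumed to exist), so picking $j_0$ with $a_{j_0}>1-\delta/2$ is immediate, and then picking $\beta_0$ with $\sum_{i\in I_{j_0}} x_\beta(i)^{k_{j_0}}$ within $\delta/2$ of $a_{j_0}$ for $\beta\ge\beta_0$ is just the definition of the net limit $a_{j_0}$. I would also note explicitly that the sums $\sum_{i\in I_j} x_\beta(i)^{k_j}$ are real (indeed they are what the hypothesis implicitly assumes), and that since $x_\beta\in\overline B_{\ell_p}$ and $k_j>p\ge 1$ the series $\sum_i |x_\beta(i)|^{k_j}$ converges and is bounded by $1$, so everything in sight is finite and the inequality $\sum_{i\in\N}|x_\beta(i)|^{k_{j_0}}\ge\sum_{i\in I_{j_0}}x_\beta(i)^{k_{j_0}}$ holds.

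A second, minor subtlety is the non-uniqueness of $\mathfrak m_\beta$: Lemma \ref{lem: z casi canonico} only asserts that $\mathfrak m_\beta$ can be chosen to realize the maximum, and for $\varepsilon$ small enough (below $2^{-1+1/p}$, as remarked after that lemma) the maximizing coordinate is in fact unique once $\beta\ge\beta_0$, so the choice of $\mathfrak m_\beta$ is unambiguous on the tail. Since the conclusion only concerns the eventual behaviour of the net, this causes no problem: for each $\varepsilon$ we only need the estimate $\|x_\beta - e_{\mathfrak m_\beta}\|<\varepsilon$ for $\beta$ large, and we are free to re-select a new (larger) $j_0$ and $\beta_0$ for each smaller $\varepsilon$.

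There is no real obstacle here; the statement is essentially a repackaging of Lemma \ref{lem: z casi canonico} for nets, and the only thing to get right is the order of quantifiers in unwinding the double limit. The main thing to be careful about is that we must first fix $j_0$ depending on $\delta$ (hence on $\varepsilon$), and only afterwards fix $\beta_0$ depending on $j_0$; attempting to choose $\beta_0$ uniformly in $j$ would not be justified and is not needed.
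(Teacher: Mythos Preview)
Your argument is essentially the paper's own proof: fix $\varepsilon$, get $\delta$ from Lemma~\ref{lem: z casi canonico}, pick $j_0$ so the inner limit exceeds $1-\delta$, then pick $\beta_0$ and apply the previous lemma. The one slip is your claim that the sums $\sum_{i\in I_{j_0}} x_\beta(i)^{k_{j_0}}$ are real: the ambient space is complex $\ell_p$, so these are complex numbers whose (double) limit happens to be $1$; the paper handles this by writing $\sum_{i\in I_{j_0}}|x_\beta(i)|^{k_{j_0}}\ge \big|\sum_{i\in I_{j_0}} x_\beta(i)^{k_{j_0}}\big|>1-\delta$, and with that insertion of moduli your proof is identical to theirs.
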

\begin{proof}
    Fix $\varepsilon>0$ and take $\delta=\delta(p,\varepsilon)$ given by the previous lemma.  We know that there is $j$ such that  
    $$\lim_\beta \Bigg|\sum_{i\in I_j} x_\beta(i)^{k_j} \Bigg| > 1-\delta.$$
    Now take $\beta_0$ such that if $\beta \geq \beta_0$, then 
    $$\sum_{i\in I_j}|x_\beta(i)|^{k_j}\ge \big|\sum_{i\in I_j} x_\beta(i)^{k_j} \big| >1-\delta.$$
    By the previous lemma, we conclude that $\| x_\beta - e_{\mathfrak{m}_\beta} \|<\varepsilon$ for all such $\beta$, which finishes the proof.
\end{proof}

Now, we have all the elements needed to address the promised result.

\begin{prop}\label{prop:nocorona} Let $1<p<\infty$, $z\in B_{\ell_p}$ and $s=\left(1-\|z\|^p\right)^{1/p}$. Then, if  $\varphi\in w^*-ac\{ \delta_{\tc{z}{n}+s e_{n+1}} : n\in \mathbb{N}\}$, then $$\GP(\varphi)\cap \overline{\{\delta_x: x\in B_{\ell_p}\}}^{w^*}=\{\varphi\}.$$ Consequently, if the corona 
is empty,  $\GP(\varphi)$ reduces to the singleton $\{\varphi\}$.
\end{prop}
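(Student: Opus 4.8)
The plan is to take a net of evaluations $\psi = w^*\text{-}\lim_\beta \delta_{z + x_\beta}$ (which we may assume, using Lemma \ref{lem:redes_cerca} and the fact that $\varphi\in\M_z$, to be a net with $\{z+x_\beta\}_\beta\subset \overline B_{\ell_p}$ and $x_\beta \overset{w}{\to} 0$, so that the tail piece of $x_\beta$ carries all the mass) and suppose $\psi\in\GP(\varphi)$. The starting point is the collection of norm-one polynomials $P_j = r_j^{-l_j}s_j^{-k_j} R_j Q_j$ from Proposition \ref{prop:ele-p-no-racional}, which satisfy $\varphi(P_j)\to 1$; since $\psi\in\GP(\varphi)$, the general fact recalled after Lemma \ref{lem:redes_cerca} (or rather \cite[Thm.~3.9]{stout1971theory}) gives $\psi(P_j)\to 1$ as well. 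Writing this out, $\psi(P_j) = r_j^{-l_j}s_j^{-k_j}\,\psi(R_j)\,\psi(Q_j)$, with $\psi(R_j) = z'_{m_j}(z+x_\beta\text{-limit})$ bounded by $r^{l_j}$ (up to the $r_j$-adjustment that tends to $1$). The key point is that this forces the $Q_j$-part to be asymptotically norming along the net: more precisely, $\lim_j \lim_\beta \sum_{i>m_j, \, i\text{ in the relevant index set}} (x_\beta(i)/s_j)^{k_j}$ must tend to $1$. After normalizing $x_\beta$ by its norm (which tends to $s$ since $x_\beta \overset{w}{\to}0$ and $\|z+x_\beta\|\le 1$ together with $\psi$ being in a nontrivial Gleason part force $\|x_\beta\|\to s$; this needs a short separate argument), we are exactly in the hypothesis of Lemma \ref{lem:cambiar por canonicos}.

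The main steps, in order, would be: (1) reduce to $\psi = w^*\text{-}\lim_\beta \delta_{z+x_\beta}$ with $x_\beta$ supported on tails (weakly null) and with $\|x_\beta\|\to s$; (2) from $\psi(P_j)\to 1$ extract the asymptotic identity $\lim_j\lim_\beta \sum_{i>m_j} (x_\beta(i))^{k_j}/s^{k_j} = 1$, carefully handling the index restrictions and the $r_j/s_j$ correction factors exactly as in \eqref{eqn:phi-evaluadaen-Pj}; (3) apply Lemma \ref{lem:cambiar por canonicos} to the normalized net $x_\beta/\|x_\beta\|$ to get coordinates $\mathfrak m_\beta$ with $\|x_\beta/\|x_\beta\| - e_{\mathfrak m_\beta}\| \to 0$, hence $\|x_\beta - s\,e_{\mathfrak m_\beta}\|\to 0$; (4) since $x_\beta$ is weakly null, $\mathfrak m_\beta \to \infty$ along the net, and combining this with $z+x_\beta$ being close to $\tc{z}{\mathfrak m_\beta - 1} + s\,e_{\mathfrak m_\beta}$ (the truncation level is harmless because $z(i)\to 0$), Lemma \ref{lem:redes_cerca} gives $\psi \in w^*\text{-}ac\{\delta_{\tc{z}{n}+s e_{n+1}} : n\in\N\}$; (5) invoke Lemma \ref{lem:distintos-gleason} to conclude $\psi = \varphi$. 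The final ``consequently'' is immediate: if the corona is empty then every element of $\GP(\varphi)$ is a $w^*$-limit of evaluations, so the displayed equality says $\GP(\varphi)=\{\varphi\}$.

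The step I expect to be the main obstacle is (2)--(3): translating the scalar convergence $\psi(P_j)\to 1$ into a clean statement to which Lemma \ref{lem:cambiar por canonicos} applies. The difficulty is that $\psi(Q_j)$ is not literally $\lim_\beta \sum_{i>m_j} x_\beta(i)^{k_j}$ — one only controls it through limits of evaluations, and the order of the limits in $j$ and $\beta$ matters. One must argue that for each fixed $j$ there is a subnet (or use the accumulation-point structure directly) along which $\delta_{z+x_\beta}(Q_j)\to \psi(Q_j)$, and that the $R_j$-factor cannot compensate because $|\psi(R_j)|\le \|\tc{z}{m_j}\|^{l_j}\le r^{l_j}$ exactly, so equality in $\psi(P_j)\to 1$ is only possible if $\psi(Q_j)/s_j^{k_j}\to 1$ in the appropriate iterated sense. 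A secondary subtlety is establishing $\|x_\beta\|\to s$ in step (1): one uses that if $\limsup_\beta\|x_\beta\| < s$ then $R(\psi)<1$ would follow via the radius function, putting $\psi\in\GP(\delta_0)=\GP(\delta_z)$ by Lemma \ref{lem:radio menor a 1}, contradicting Proposition \ref{prop:ele-p-no-racional}; and $\|x_\beta\|$ cannot exceed $s$ in the limit without violating $\|z+x_\beta\|\le 1$ in the weak limit. Once these are in place, the remaining steps are bookkeeping with the notation $\tc{z}{n}$, $\tcm{z}{n}$ already set up.
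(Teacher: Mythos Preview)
Your plan matches the paper's proof in structure and in every key step: reduce the net (via Lemma \ref{lem:redes_cerca}) so that $\psi$ is a limit of evaluations at a truncation of $z$ plus a tail vector; use $\psi(P_j)\to 1$ to feed Lemma \ref{lem:cambiar por canonicos}; replace the tail by $s\,e_{\mathfrak m_\beta}$; adjust the truncation level; and finish with Lemma \ref{lem:distintos-gleason}. Two minor points where the paper's execution is cleaner than your sketch of step (1): first, the paper works with $\tc{z}{n_\beta}$ rather than the full $z$ (defining $n_\beta$ so that $\|\tc{z}{n_\beta}-\tc{x_\beta}{n_\beta}\|<1/n_\beta$), which sidesteps the overlap issue when $z$ has infinite support; second, starting from $x_\beta\in S_{\ell_p}$ (via the remark after Lemma \ref{lem:redes_cerca}) and then \emph{normalizing} the tail to have norm exactly $s$ makes your radius-function argument for $\|x_\beta\|\to s$ unnecessary. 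Also note that placing $\psi$ in $\M_z$ requires Proposition \ref{prop:gleason-no-intersec} ($\GP(\varphi)\subset\M_z$), not just $\varphi\in\M_z$.
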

\begin{proof}
    Take $\varphi \in w^*-ac\{ \delta_{\tc{z}{n}+s e_{n+1}} : n\in \mathbb{N}\}$ and $\psi \in \M_z\cap \overline{\{\delta_x: x\in B_{\ell_p}\}}^{w^*}$. By the comments following Lemma \ref{lem:redes_cerca}, there is a net $\{x_\beta\}_\beta \subset S_{\ell_p}$ such that $\psi= w^*-\lim_\beta  \delta_{x_\beta}$. Let us assume that $\psi\in\GP(\varphi)$ and show that, then, $\psi=\varphi$. 
  
    We aim to prove that $\psi$ actually belongs to $w^*-ac\{ \delta_{\tc{z}{n}+s e_{n+1}} : n\in \mathbb{N}\}$, so that   Lemma \ref{lem:distintos-gleason} gives the desired result.     
    The strategy is to use Lemma \ref{lem:redes_cerca} repeatedly to change the net $\{x_\beta\}_\beta$ by a net of the form $\{\tc{z}{n_\beta -1}+s e_{n_\beta}\}_\beta$. 

    \emph{First change:} We begin by changing the net $\{x_\beta\}_\beta$ by a net formed by truncations of $z$ added to vectors of disjoint support.  
    Since $e_1'(x_\beta)\rightarrow e_1'(z)$, we can assume $|z(1)-x_\beta(1)| <1$ for all $\beta$. Also, the condition $x_\beta \neq z$ for every $ \beta$ allows us to define 
    $${n_\beta}=\max\left\{ n : \| \tc{z}{n} - \tc{x_\beta}{n}\| < \frac{1}{n}\right\}.$$
   Let us show that $n_\beta \underset{\beta}{\longrightarrow} \infty$. As $z$ is the weak-limit of  $\{x_\beta\}_\beta$, given $n_0$ we choose $\beta_0$ such that if $\beta \geq\beta_0$, 
    $$|z(i)-x_\beta(i)|=|e_i'(z)-e_i'(x_\beta)| < \frac{1}{n_0^2}, \,\,\,   i=1,\ldots,n_0.$$
    This implies that 
    $$\| \tc{z}{n_0} - \tc{x_\beta}{n_0} \|\leq \sum_{i=1}^{n_0} |z(i)-x_\beta(i)| < \frac{1}{n_0},$$
    so we must have $n_\beta\geq n_0$ for all $\beta \geq \beta_0$, proving the announced limit.

   Now, consider the net $\{\tc{z}{n_\beta}   + \tcm{x_\beta}{n_\beta}\}_\beta$ which satisfies
    $$\|x_\beta  -\left( \tc{z}{n_\beta}   +\tcm{x_\beta}{n_\beta}\right)\| = \|\tc{x_\beta}{n_\beta}-\tc{z}{n_\beta}\| \leq \frac{1}{n_\beta}\underset{\beta}{\longrightarrow} 0.$$

    Since $\|\tc{z}{n_\beta}\|\leq \|z\|=r$  the following net is contained in $\overline{B}_{\ell_p}$:
    \begin{equation}\label{eq:firstchange}
        \left\{\tc{z}{n_\beta}   +\frac{s}{\|\tcm{x_\beta}{n_\beta}\|}\tcm{x_\beta}{n_\beta}\right\}_\beta,
    \end{equation} with the proviso that if $\|\tcm{x_\beta}{n_\beta}\| = 0$, we simply consider $\tc{z}{n_\beta}$.

     Given that $\|\tc{z}{n_\beta}\|\rightarrow \|z\|=r$  and  $\|x_\beta\|=1$ for all $\beta$ we know that $\|\tc{x_\beta}{n_\beta}\|\rightarrow r$ and, then,   $\|\tcm{x_\beta}{n_\beta}\|\rightarrow s$, which gives 
    $$\lim_\beta\left\|x_\beta  -\left(\tc{z}{n_\beta}  + \frac{s}{\|\tcm{x_\beta}{n_\beta}\|}\tcm{x_\beta}{n_\beta}\right)\right\| =\lim_\beta  \|x_\beta  -\left( \tc{z}{n_\beta}   +\tcm{x_\beta}{n_\beta}\right)\|=0.$$
   Thus, we can apply Lemma \ref{lem:redes_cerca} to replace $\{x_\beta\}_\beta$ by the net given in \eqref{eq:firstchange}. Redefining $x_\beta$ as the second term of this net, we now have $$\psi=\lim_\beta  \delta_{\tc{z}{n_\beta}+x_\beta},$$ with $\tc{z}{n_\beta}$ and $x_\beta$ of disjoint support, $\|x_\beta\|= s$ and $n_\beta \underset{\beta}{\longrightarrow} \infty$. 

    \emph{Second change:} 
     In this step, we aim to change the vectors $x_\beta$ by multiples of canonical vectors. Once again, as in the proof of Proposition \ref{prop:ele-p-no-racional}, we take sequences $(k_j), (l_j)\subset\N$ satisfying 
 $\frac{l_j}{l_j+k_j}\underset{j\to\infty}{\rightarrow}r^p$, $(l_j+k_j)r^p-l_j\underset{j\to\infty}{\rightarrow} 0$ and
$k_j > p$ for all $j$ and define the polynomials $R_j(x) = (z'_{m_j}(x))^{l_j}$ and $Q_j(x) = \sum_{i > m_j} (e_i'(x))^{k_j}$ with normalized product  $P_j\in\mathcal P(^{l_j+k_j}\ell_p)$ given by  $$P_j(x)=\left(\frac{(l_j+k_j)^{l_j+k_j}}{l_j^{l_j}  k_j^{k_j}} 
 \right)^{\frac1p}R_j(x) Q_j(x).$$ 
 Since $\varphi(P_j)\rightarrow 1$ (see the proof of Proposition \ref{prop:ele-p-no-racional}), then also $\psi(P_j)\rightarrow 1$ and thus, 
    \begin{eqnarray*}
        1&=& \lim_j \psi(P_j) = \lim_j\lim_\beta P_j(\tc{z}{n_\beta}+x_\beta)  \\
        &=& \lim_j\lim_\beta \frac{R_j(\tc{z}{n_\beta})}{r_j^{l_j}} \left( \frac{Q_j(\tc{z}{n_\beta})}{s_j^{k_j}}+\frac{Q_j(x_\beta)}{s_j^{k_j}}\right) ,\
    \end{eqnarray*} 
    where we used that $R_j(\tc{z}{n_\beta}+x_\beta)=R_j(\tc{z}{n_\beta})$ for $\beta$ large enough.     Arguing as in the proofs of Proposition \ref{prop:ele-p-no-racional} and Lemma \ref{lem:distintos-gleason}, we can see that  $$\lim_j\lim_\beta \frac{R_j(\tc{z}{n_\beta})}{r_j^{l_j}} = \lim_j \frac{\|\tc{z}{m_j}\|^{l_j}}{r^{l_j}}\frac{r^{l_j}}{r_j^{l_j}}=1\qquad \text{and}$$   $$\lim_j\lim_\beta \frac{Q_j(\tc{z}{n_\beta})}{s_j^{k_j}}=\lim_j \frac{\sum_{\substack{i > m_j }} z(i)^{k_j} }{s_j^{k_j}}=0,$$
       which gives 
    $$1=\lim_j\lim_\beta \frac{Q_j\left(x_\beta\right)}{s_j^{k_j}} =\lim_j\lim_\beta Q_j\left(\frac{x_\beta}{s}\right)\frac{s^{k_j}}{s_j^{k_j}} =\lim_j\lim_\beta Q_j\left(\frac{x_\beta}{s}\right).$$
    By Lemma  \ref{lem:cambiar por canonicos}, we have
    $$\| x_\beta - s e_{\mathfrak{m}_\beta} \|\rightarrow 0,$$
    where $\mathfrak{m}_\beta$ satisfies $|x_\beta(\mathfrak{m}_\beta)|=\max|x_\beta(i)|$.
    Then, using again Lemma \ref{lem:redes_cerca}, we may replace $\tc{z}{n_\beta}+x_\beta$ by $\tc{z}{n_\beta}+s e_{\mathfrak{m}_\beta}$. Then, we now have $$\psi=\lim_\beta  \delta_{\tc{z}{n_\beta}+se_{\mathfrak{m}_\beta}},$$
     and note that, in particular, $\mathfrak{m}_\beta>n_\beta$. 

    \emph{Third change:} Fortunately,  we are left with the easiest change. Since
    $$\| \tc{z}{n_\beta}+s e_{\mathfrak{m}_\beta} - (\tc{z}{\mathfrak{m}_\beta-1}+s e_{\mathfrak{m}_\beta})\| \leq \|\tc{z}{n_\beta}- z\|=\|\tcm{z}{n_\beta}\|\rightarrow 0$$
    we may replace $\tc{z}{n_\beta}+s e_{\mathfrak{m}_\beta}$ by $\tc{z}{\mathfrak{m}_\beta-1}+s e_{\mathfrak{m}_\beta}$, which is our final goal.
 \end{proof}

 \begin{rem}
Note that the last step of the previous proof actually shows the following equality
$$ w^*-ac\{ \tc{z}{n} + se_{m}: m > n\ge 1\} = w^*-ac\{ \tc{z}{m-1} + se_{m} : m\ge 2 \},$$
for every $z\in B_{\ell_p}$ and $s>0$ such that  $\|z\|^p+s^p\le 1$. 
 \end{rem}

\subsection{\textsc{Strong boundary points}}\label{subs: sbp}

 In the description of the analytic structure of the spectrum, the strong boundary points are distinguished elements. Therefore, it is important to identify these points or at least locate them in the fibers.  Recall that $\varphi\in \M(\A_u(B_X))$ is a \emph{strong boundary point} if for every open neighborhood $U\subset \M(\A_u(B_X))$ of $\varphi$ there exists $f\in \A_u(B_X)$ satisfying $\|f\|=1=\varphi(f)$ and $|\psi(f)|<1$ for all $\psi\notin U$.

It follows directly from the definition that any strong boundary point $\varphi\in \M(\A_u(B_X))$ satisfies $\GP(\varphi)=\{\varphi\}$.

 It was established in \cite{aron1991spectra} (see also \cite{DimLasMae}) that the spectrum $\M(\A_u(B_{\ell_p}))$ contains strong boundary points in the fiber over 0. Open problem 1 in \cite{DimLasMae} raises the question as to whether the same holds for fibers over other points $z\in B_{\ell_p}$. We face here our usual first target, which is fibers over points
 $z\in B_{\ell_p}$ with  finite support and $\|z\|^p\in\Q$. In this case, we are able to prove that some of the homomorphisms defined in Proposition \ref{prop:distinta-gleason} are in fact strong boundary points. 
 
 \begin{rem} \label{rem:sbp}
     We rely on a slight modification \cite[Thm. 7.21]{stout1971theory} which is implicit in its proof: if $f\in \A_u(B_X)$  satisfies $\|f\|=\varphi(f)=1$ for some $\varphi\in \M(\A_u(B_X))$, then there exists a strong boundary point $\varphi_0\in \M(\A_u(B_X))$ with $\varphi_0(f)=1$. Specifically, \cite[Thm. 7.21]{stout1971theory} states that every $f\in \A_u(B_X)$ reaches its maximum at a strong boundary point of $\M(\A_u(B_X))$. However, its proof establishes that if $f$ takes the value 1 at an element of $\M(\A_u(B_X))$, then there is a strong boundary point where it takes the same value.  
 \end{rem}

 \begin{rem}\label{rem:automorfismo}
     By \cite[Prop. 1.6]{aron2020gleason} any automorphism $\Phi:B_{\ell_p}\to B_{\ell_p}$ such that $\Phi$ and $\Phi^{-1}$ are uniformly continuous, induces a surjective Gleason isometry $\Lambda_\Phi:\M(\A_u(B_{\ell_p}))\to \M(\A_u(B_{\ell_p}))$ given by $\Lambda_\Phi(\varphi)(f)=\varphi(f\circ\Phi)$, for all $\varphi\in \M(\A_u(B_{\ell_p}))$, $f\in \A_u(B_{\ell_p})$. Also,  if $\varphi\in \M_\omega$ then $\Lambda_\Phi(\varphi)\in \M_{\Phi(\omega)}$. Moreover, in this case it is easy to show that if $\varphi$ is a strong boundary point, then so is $\Lambda_\Phi(\varphi)$. 
 \end{rem}

 \begin{thm}\label{thm:strong boundary points}
     Let $1<p<\infty$, $m\in\N$, $z\in B_{\ell_p}$ with  $\supp( z)\subset \{1,\ldots, m\}$  and $r^p=\|z\|^p\in\Q$. Then, there are at least $\mathfrak c$ strong boundary points in $\M_z$. 
     More precisely, denoting $s=\left(1-r^p\right)^{1/p}$, for each $\xi\in\C$ with $|\xi|=1$,  there exists a strong boundary point $\varphi_{\xi}$ in $w^*-ac\{ \delta_{z+s \xi e_{n}} : n\in \mathbb{N}, n> m\}\cap \M_z$ so that  all $\varphi_{\xi}$ are different. 
  \end{thm}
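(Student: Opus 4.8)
The plan is the following. For $\xi\in\C$ with $|\xi|=1$ write $K_\xi:=w^*-ac\{\delta_{z+s\xi e_n}:n>m\}$. Since $z+s\xi e_n\to z$ weakly we have $K_\xi\subset\M_z$ and $K_\xi\neq\varnothing$, and these sets are pairwise disjoint: for an integer $k'>p$ the norm-one polynomial $\sum_{i>m}(e_i')^{k'}$ has value $(s\xi)^{k'}$ at every $z+s\xi e_n$ with $n>m$, so each $\varphi\in K_\xi$ satisfies $\varphi\big(\sum_{i>m}(e_i')^{k'}\big)=s^{k'}\xi^{k'}$, and comparing two consecutive exponents forces $\xi=\xi'$ whenever $K_\xi\cap K_{\xi'}\neq\varnothing$. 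Hence it suffices to place one strong boundary point inside each $K_\xi$: the $\varphi_\xi$ are then automatically distinct, and since the unit circle has cardinality $\mathfrak c$ this also yields the bound $\mathfrak c$ on the number of strong boundary points in $\M_z$.

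To build a peaking function for $K_\xi$, enlarge $m$ if necessary keeping $\supp(z)\subset\{1,\dots,m\}$, set $r=\|z\|$, pick $k,l\in\N$ with $k>p$ and $r^p=\tfrac{l}{l+k}$, and fix $z'=z'_{(m)}\in S_{\ell_{p'}}$ with $z'(z)=r$. For $N\geq m$ let
\[
H_N(x)=\frac{\bar\xi^{\,k}}{r^{l}s^{k}}\,z'(x)^{l}\sum_{i>N}(e_i'(x))^{k}.
\]
By Lemma \ref{lem:Pinasco} (disjoint blocks of variables) $\|H_N\|=1$, and one checks $H_N(z+s\xi e_n)=1$ for $n>N$, while $H_N(z+s\xi\zeta e_n)=0$ for $m<n\leq N$ and every $\zeta\in\C$. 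Hence $H_\xi:=\sum_{N\geq m}2^{-N}H_N$ lies in $\A_u(B_{\ell_p})$, has norm one, and $\varphi(H_\xi)=1$ for every $\varphi\in K_\xi$ (an accumulation point of $(\delta_{z+s\xi e_n})_n$ only feels cofinitely many $n$). By Remark \ref{rem:sbp} there is a strong boundary point $\varphi_\xi$ with $\varphi_\xi(H_\xi)=1$, so it remains to locate the peak set $E_\xi:=\{\psi:\psi(H_\xi)=1\}$ inside $K_\xi$.

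Fix $\psi\in E_\xi$. Convexity gives $\psi(H_N)=1$ for all $N$, i.e.\ $\psi(z')^{l}\,\psi\big(\sum_{i>N}(e_i')^{k}\big)=r^{l}s^{k}\xi^{k}$; subtracting consecutive equations (and noting $\psi(z')\neq 0$) yields $\psi(e_i')=0$ for $i>m$, so $\omega:=\psi|_{X'}$ is finitely supported with $\|\omega\|\le 1$, while the case $N=m$ leaves $\psi(P_\xi)=1$ for the norm-one homogeneous polynomial $P_\xi=H_m$ of Proposition \ref{prop:distinta-gleason}, whence $R(\psi)=1$. Applying Lemma \ref{lem:existe_m} to $Q=\bar\xi^{\,k}\sum_{i>m}(e_i')^{k}$ with the auxiliary degree $l$ and arbitrarily small $\eta$ (legitimate since $\omega$ is finitely supported, so the truncation there is exact) gives $|\psi(\sum_{i>m}(e_i')^k)|\le r^{l}s^{k}/\|\omega\|^{l}$; combined with $|\psi(z')|\le\|\omega\|$ and the identity above, this forces equality throughout, and the single-variable inequality $u^{l}(1-u^p)^{k/p}\le r^{l}s^{k}$ (equality only at $u=r$), together with the analogous estimates obtained by replacing $(l,k)$ by $(tl,tk)$ and letting $t\to\infty$, forces $\|\omega\|=r$; then $|z'(\omega)|=\|\omega\|$, and strict convexity of $\ell_p$ (uniqueness of the point norming $z'$) gives $\omega=e^{i\theta}z$ for some $\theta$. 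To eliminate this residual phase and then to land inside $K_\xi$ rather than in the corona, I would use the rotation isometries $x\mapsto e^{i\theta}x$ of Remark \ref{rem:automorfismo} (which carry $\M_{e^{i\theta}z}$ back to $\M_z$ and strong boundary points to strong boundary points), the canonical-vector lemmas \ref{lem: z casi canonico}--\ref{lem:cambiar por canonicos} (which, since $\psi$ attains the tail bound above, force $\psi$ to be a $w^*$-limit of evaluations at points of the form $z+s\eta\,e_n$ with $|\eta|=1$), and the net-replacement machinery of Lemma \ref{lem:redes_cerca} exactly as in Proposition \ref{prop:nocorona}.

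I expect the main obstacle to be precisely this last point, namely confining $E_\xi$ to $K_\xi$. The tail-shifted polynomials $H_N$ are tailored to kill the sphere evaluations $\delta_{z+s\xi\zeta e_n}$ (each of which is in fact a peak point, so a cruder argument using $P_\xi$ alone would land there) and to force $\psi(e_i')=0$ for $i>m$, and the quantitative uniform-convexity estimates then cut $E_\xi$ down to the union of the fibers $\M_{e^{i\theta}z}$; the genuinely delicate parts are removing this one-parameter rotational ambiguity in $\theta$ (where I would exploit that $\xi$ itself ranges over all of $\mathbb{T}$, so the conclusion is only needed up to rotation) and discarding possible corona homomorphisms via the adaptation of Proposition \ref{prop:nocorona}. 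Once $E_\xi\subset K_\xi$ is in place, the distinctness of the $\varphi_\xi$ and the cardinality $\mathfrak c$ follow from the first paragraph.
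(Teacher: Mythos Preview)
Your strategy is structurally close to the paper's: after normalising, $H_\xi$ is precisely the paper's polynomial
\[
P(x)=\frac{1}{r^{l}s^{k}}\,z'(x)^{l}\sum_{j>m}a_j\,(e_j'(x))^{k}
\]
with the particular choice $a_j=1-2^{\,m-j}$ (plus an innocuous factor $\bar\xi^{\,k}$). Your convexity argument for $\psi(H_N)=1$ for all $N$ is slick and does give $\psi(e_i')=0$ for $i>m$; the paper reaches the same conclusion by perturbing $P$ to $P_\varepsilon$ and using $\|P_\varepsilon\|=1$.

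There are, however, two genuine gaps.

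\textbf{(i) The step $\|\omega\|=r$ is not justified.} From $\psi(H_m)=1$ and Lemma~\ref{lem:existe_m} with $\eta\to 0$ you obtain only the equalities $|z'(\omega)|=\|\omega\|$ and $|\psi(Q)|=r^{l}s^{k}/\|\omega\|^{l}$; the scalar inequality $u^{l}(1-u^{p})^{k/p}\le r^{l}s^{k}$ is irrelevant unless you know $|\psi(Q)|\le(1-\|\omega\|^{p})^{k/p}$, which you do not. Replacing $(l,k)$ by $(tl,tk)$ and applying the lemma to the norm-one polynomial $Q^{t}$ returns exactly the same equality (both sides are $t$-th powers of the case $t=1$), so ``letting $t\to\infty$'' gives nothing new. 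The paper instead invokes \cite[Prop.~3.1]{choi2021boundaries}: every strong boundary point $\psi$ is a $w^*$-limit of evaluations $\delta_{x_\alpha}$ with $x_\alpha\in S_{\ell_p}$. Since $\pi_m$ has finite rank, $\pi_m(x_\alpha)\to\omega$ in norm and hence $\|\pi^m(x_\alpha)\|\to(1-\|\omega\|^{p})^{1/p}$; then
\[
1=|\psi(P)|\le \frac{\|\omega\|^{l}(1-\|\omega\|^{p})^{k/p}}{r^{l}s^{k}}\le 1,
\]
and the Lagrange-multiplier identity forces $\|\omega\|=r$. This external input is essential; Lemma~\ref{lem:existe_m} alone cannot detect that $\psi$ comes from the sphere.

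\textbf{(ii) The handling of $\xi$ is circular.} After the global rotation $x\mapsto e^{-i\theta}x$ bringing $\psi$ into $\M_z$, the points $z+s\xi e_n$ become $e^{-i\theta}z+s\xi e^{-i\theta}e_n$, so the rotated $\psi$ lands (at best) in $K_{\xi e^{-i\theta}}$ rather than $K_\xi$; since $\theta$ is not under your control, distinct values of $\xi$ may well produce the same final homomorphism. The paper avoids this by first constructing a single strong boundary point $\varphi\in K_{1}\cap\M_z$ (via the argument above, including the Choi et al.\ representation, and then Lemmas~\ref{lem: z casi canonico} and~\ref{lem:redes_cerca} as you indicate), and only afterwards applying the \emph{partial} automorphism $\Phi_\xi$ that fixes the first $m$ coordinates and multiplies the rest by $\xi$. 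This $\Phi_\xi$ fixes $z$ and sends $K_1$ bijectively onto $K_\xi$, giving one $\varphi_\xi$ per $\xi$.

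A minor point: $\|H_\xi\|=\sum_{N\ge m}2^{-N}=2^{-m+1}$, not $1$; this is harmless after renormalisation.
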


 \begin{proof} We begin by showing the existence of one strong boundary point in $\M_z$. Later, we produce the family of these points described as in the statement.
     Write $r^p=\frac{l}{l+k}$ with $k\ge p$ 
     and choose $z' = (z'(1), \ldots, z'(m), 0, 0, \ldots) \in S_{\ell_{p'}}$ such that $z'(z)=\|z\|$. Take $\{a_n\}_n$ a strictly increasing sequence  of positive numbers converging to $1$,
     and consider the polynomial $P\in\mathcal P(^{l+k}\ell_p)$ given by
     $$
     P(x)=\frac{1}{r^l s^k} z'(x)^l \sum_{j>m} a_j (e_j'(x))^k.
     $$
On the one hand, proceeding as in previous proofs we see that $\|P\|=1$. On the other hand,  $|P(x)|<1$ for all $x\in \overline B_{\ell_p}$. Indeed,  $0<a_n<1$ for all $n$ and so
$$
|P(x)| = \frac{1}{r^l s^k} |z'(\pi_m(x))|^l \Bigg|\sum_{j>m} a_j (e_j'(\pi^m(x)))^k\Bigg|< \frac{1}{r^l s^k} \|\pi_m(x)\|^l \|\pi^m(x)\|^k.
$$ Noting that $\|\pi_m(x)\|^p + \|\pi^m(x)\|^p=\|x\|^p$, through Lagrange multipliers we conclude that $\frac{1}{r^l s^k} \|\pi_m(x)\|^l \|\pi^m(x)\|^k\le 1$.

Since $P(z+se_n)\underset{n\to\infty}{\to} 1$, by Remark \ref{rem:sbp} there exists a strong boundary point $\psi\in \M(\A_u(B_{\ell_p}))$  such that $\psi(P)=1$. Let us see that  $\psi$ belongs to $\M_\omega$ for some $\omega$ closely related to $z$. First, if for some $n>m$ we have  $\psi(e_n')=\lambda\not=0$, take $\varepsilon>0$ satisfying $a_n+\varepsilon<1$ and $\theta\in\R$ such that $b=\psi(z')^l \lambda^k e^{i\theta}>0$. Hence, for the  polynomial 
$$
   P_\varepsilon(x)=\frac{1}{r^l s^k} z'(x)^l \left(\sum_{j>m} a_j (e_j'(x))^k + \varepsilon e^{i\theta} e_n'(x)^k \right)
     $$ we have $\|P_\varepsilon\|=1$ and $\psi( P_\varepsilon)= \psi(P) + \frac{\varepsilon}{r^l s^k}b >1$, which is not possible. Therefore, $\psi(e_n')=0$  for all $n>m$ and then $\psi\in\M_\omega$, for some $\omega\in B_{\ell_p}$ with $\supp( \omega)\subset \{1,\ldots, m\}$.

Appealing to \cite[Prop. 3.1]{choi2021boundaries} we know that there is a net $\{x_\alpha\}_\alpha\subset S_{\ell_p}$ such that $\delta_{x_\alpha} \overset{w^*}{\to} \psi$. This implies that the net $\{x_\alpha\}_\alpha$ is weakly convergent to $\omega$. Moreover, since $\omega=\pi_m(\omega)$ and $\pi_m$ has finite rank we conclude that the net $\{\pi_m(x_\alpha)\}_\alpha$ strongly converges to $\omega$ while $\{\pi^m(x_\alpha)\}_\alpha$ weakly converges to 0. Then,
\begin{eqnarray*}
    1 &=& |\psi(P)| = \frac{1}{r^l s^k} |z'(\omega)|^l \lim_\alpha\Bigg|\sum_{j>m} a_j (e_j'(\pi^m(x_\alpha)))^k\Bigg| \le \frac{\|\omega\|^l}{r^l s^k} \lim_\alpha \|\pi^m(x_\alpha)\|^k  \\
    & = &\frac{\|\omega\|^l}{r^l s^k} \lim_\alpha (1-\|\pi_m(x_\alpha)\|^p)^{\frac{k}{p}} 
    =\frac{\|\omega\|^l (1-\|\omega\|^p)^{\frac{k}{p}} }{r^l s^k}\le 1.
\end{eqnarray*}
This is possible if and only if $\|\omega\|=r$, so we have $|z'(\omega)|=\|\omega\|=r=\|z\|=z'(z)$. By the strict convexity of $\ell_p$, we necessarily have $z=\lambda \omega$ for a certain $\lambda\in \C$ with $|\lambda|=1$.

Now, consider the automorphism $\Phi:B_{\ell_p}\to B_{\ell_p}$ given by $\Phi(x)=\lambda x$. Remark \ref{rem:automorfismo} tells us that $\Lambda_\Phi(\psi)$ is a strong boundary point which belongs to $\M_z$. Note, also,  that from $\delta_{x_\alpha} \overset{w^*}{\to} \psi$ we have $\delta_{\lambda x_\alpha} \overset{w^*}{\to} \Lambda_\Phi(\psi)$. 
Hence, renaming $\varphi=\Lambda_\Phi(\psi)$ and $y_\alpha=\lambda x_\alpha$, we get that $\varphi\in\M_z$ is a strong boundary point, $\varphi=w^*-\lim \delta_{y_\alpha}$ and $|\varphi(P)|=|\lambda^{l+k} \psi(P)|=1$. 

Our next step is to work as in Proposition \ref{prop:nocorona} to change $\{y_\alpha\}_\alpha$ by a net of the form $\{z+se_{n_\alpha}\}_\alpha$.
Recall that $\|\pi_m(y_\alpha)\|^p + \|\pi^m(y_\alpha)\|^p=\|y_\alpha\|^p=1$ for all $\alpha$ and $\{\pi_m(y_\alpha)\}_\alpha$ is strongly convergent to $z$. Then, $s=\lim_\alpha \|\pi^m(y_\alpha)\|$. Now, for each $\alpha$, define $z_\alpha=z+u_\alpha$, where
\delimiterfactor=900
\delimitershortfall=5pt
$$
u_\alpha= \left\{ \begin{array}{ccl}
            \frac{s}{
            \|\pi^m(y_\alpha)\|} \pi^m(y_\alpha)  & \quad    & \text{if } \pi^m(y_\alpha)\not=0\\
            & & \\
         0 & \quad    & \text{if } \pi^m(y_\alpha) =0.
             \end{array}
   \right.
$$     
Note that $\|z_\alpha\|\le 1$ and 
$$
\|y_\alpha - z_\alpha\|^p = \|\pi_m(y_\alpha) - z\|^p + \|\pi^m(y_\alpha)\|^p \left(1-\frac{s}{\|\pi^m(y_\alpha)\|}\right)^p \to 0.
$$
\delimiterfactor=500
\delimitershortfall=25pt
Hence, we can apply Lemma \ref{lem:redes_cerca} to deduce that $\varphi=w^*-\lim \delta_{z + u_\alpha}$.

The inequality
$$
\sum_{j>m} \Bigg|\frac{u_\alpha(j)}{s}  \Bigg|^k  \ge \frac{\big|\sum_{j>m}a_j u_\alpha(j)^k\big|}{s^k} = |P(z+u_\alpha)| \to |\varphi(P)|= 1
$$ allows us to apply Lemma \ref{lem: z casi canonico} to obtain that
$$
\|u_\alpha - s e_{\mathfrak{m}(\alpha)}\|\to 0\quad \text{where } |u_\alpha(\mathfrak{m}(\alpha))| = \max_{j}
|u_\alpha(j)|.$$
A new appeal to Lemma \ref{lem:redes_cerca} yields $\varphi=w^*-\lim \delta_{z + s e_{\mathfrak{m}(\alpha)}}$.

Finally, for each $\xi\in \C$ with $|\xi|=1$ we consider the automorphism  $\Phi_\xi:B_{\ell_p}\to B_{\ell_p}$ given by  
\delimiterfactor=900
\delimitershortfall=5pt
$$\Phi_\xi(x)(j)=\left\{ \begin{array}{ccl}
           x(j)  & \quad    & \text{if } j\le m\\
            & & \\
         \xi x(j) & \quad    & \text{if } j>m.
             \end{array}
   \right.$$
\delimiterfactor=500
\delimitershortfall=25pt   
Since $\Phi_\xi(z)=z$, by Remark \ref{rem:automorfismo} we know that $\varphi_\xi=\Lambda_{\Phi_\xi}(\varphi)$ is a strong boundary point that belongs to $\M_z$. Also, $\varphi_\xi=w^*-\lim \delta_{z + s \xi e_{c(\alpha)}}$. To see that all $\varphi_\xi$ are different, just define $Q_\ell(x)=\sum_{j>m} x(j)^\ell$ for each $\ell\ge p$ and observe that and $\varphi_\xi(Q_\ell)=s^\ell \xi^\ell$.
 \end{proof}

It is natural to conjecture that strong boundary points exist in the fibers over any $z\in B_{\ell_p}$.
Note that, as commented  in \cite{DimLasMae}, this is true for the case of $\ell_2$  due to the existence of automorphisms of the ball $B_{\ell_2}$ sending 0 to any other point.



\section{The case $p=1$}

The structure of the spectrum $\M(\A_u(B_{\ell_1}))$ differs from the previous case due to two constitutive differences between the base space $\ell_1$ and  $\ell_p$  ($1<p<\infty$). First, $\ell_1$ is not reflexive, which implies that the spectrum is fibered over $ \overline B_{\ell_1''}$ instead of $\overline B_{\ell_1}$.  Second, it lacks symmetric regularity, a highly relevant property for describing the spectrum. From \cite[Prop. 3.1]{aron2016cluster} and \cite[Thm. 4.2]{DimLasMae} we know that the only singleton fibers are those over points $z\in S_{\ell_1}$. Regarding Gleason parts, little was known about the existence of more than one Gleason part intersecting the same fiber. 
 In \cite[Cor. 4.10]{DimLasMae} it is shown that there is $\varphi\in\M_{z''}$ which does not belong to the Gleason part of $\delta_{z''}$, in the case of $z''$ being a real extreme point of $\overline B_{c_0^\perp}$.  Open problem 2 in the same article poses the question about the occurrence of different Gleason parts in fibers over arbitrary points. Here,  we complete the picture for every interior fiber, that is, those over each $z''\in B_{\ell_1''}$. Our main result in this direction is the following.

\begin{thm}\label{thm:PG-puntos-interiores-ele-1}
For every   $z''\in B_{\ell_1''}$, the fiber $\M_{z''}$ intersects $2^{\mathfrak c}$ different  Gleason parts of $\M(\A_u(B_{\ell_1}))$.
\end{thm}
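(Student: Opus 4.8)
The plan is to keep the four-step architecture of Section~3 — accumulate suitable evaluations inside the fiber, separate the resulting homomorphisms by carefully built functions, and amplify to $2^{\mathfrak c}$ via an interpolating-sequence argument — but to replace each ingredient by one adapted to the two features that make $\ell_1$ different: non-reflexivity (the fiber lies over $\overline{B}_{\ell_1''}$, so the singular directions in $c_0^{\perp}$ must be tracked) and the failure of $(e_n)$ to be weakly null (so one cannot add a canonical bump to an interior point without moving the fiber). I would first fix a decomposition $z''=z+w$ with $z\in\ell_1$, $w\in c_0^{\perp}$ and $\|z''\|=\|z\|+\|w\|<1$, and reduce — by truncating $z$ and, crucially, by using that $w\in c_0^{\perp}$ is a $w^*$-limit of $\ell_1$-vectors whose supports escape to infinity (together with the $w^*$-density of $\ell_1$ in $\ell_1''$) — to analysing $w^*$-accumulation points of explicit nets $(\delta_{x_\gamma})_\gamma$ with $x_\gamma\to z''$ in $\sigma(\ell_1'',\ell_\infty)$. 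Any such accumulation point restricts to $z''$ on $(\ell_1)'=\ell_\infty$ and hence lies in $\M_{z''}$. For the amplification, fix once and for all a partition $\N=\bigcup_u\N_u$ into infinitely many infinite blocks and arrange that the $u$-th net carries its ``mass at infinity'' inside $\N_u$, obtaining homomorphisms $\varphi^u\in\M_{z''}$.

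To separate Gleason parts I would, in the spirit of Lemma~\ref{lem:distintos-gleason} and Proposition~\ref{prop:resultado-ppal-ellp}, look for functions $P^u_j$ with $P^u_j(0)=0$, $\|P^u_j\|\le 1$, $\varphi^u(P^u_j)\to 1$ and $\varphi^v(P^u_j)\to 0$ for $v\neq u$; this yields $\GP(\varphi^u)\neq\GP(\varphi^v)$ for $u\neq v$ and $\GP(\varphi^u)\neq\GP(\delta_0)=\GP(\delta_{z''})$ through the sequential characterization of Gleason parts recalled in Section~2. Granting the construction of the $P^u_j$, the rest runs parallel to Proposition~\ref{prop:resultado-ppal-ellp}: one checks that $\{\varphi^u\}_u$ is interpolating — for $b\in\ell_\infty$ the function $S=\sum_u\frac{b(u)}{t^u_j}P^u_j$, after translating to a fixed point and using that the $j$-th pieces live on disjoint blocks (so the series is a bona fide element of $\A_u(B_{\ell_1})$ of norm $\lesssim\|b\|_\infty$), satisfies $\varphi^u(S)=b(u)$ — so the $w^*$-closure of $\{\varphi^u\}$ is a copy of $\beta(\N)$, of cardinality $2^{\mathfrak c}$, it is contained in $\M_{z''}$ by construction, and its points meet $2^{\mathfrak c}$ distinct Gleason parts.

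The genuinely hard part, and the reason the $p=1$ argument is more involved, is precisely the construction of the separating functions — equivalently, of homomorphisms of $\M_{z''}$ that lie off $\GP(\delta_0)$. By Lemma~\ref{lem:radio menor a 1}, any $\varphi\in\M_{z''}\setminus\GP(\delta_0)$ must satisfy $R(\varphi)=1$ although $\|\varphi|_{\ell_\infty}\|=\|z''\|<1$. In the reflexive range this is free: $se_n$ is a weakly null bump, $w^*\text{-}\lim\delta_{\pi_n(z)+se_n}$ lands over $z$, and the products on disjoint variables of Lemma~\ref{lem:Pinasco} finish the job. For $\ell_1$ this collapses: by the Schur property (and the obstruction coming from the constant functional $\mathbf 1\in\ell_\infty$) no non-vanishing escaping mass can be invisible to $\ell_\infty$, and indeed a direct computation shows that for \emph{any} choice of degrees the normalized polynomial $r^{-l_j}t^{-k_j}R_jQ_j$ of Proposition~\ref{prop:ele-p-no-racional} has norm of order at least $\|z''\|^{-(l_j+k_j)}\to\infty$, so it is useless once renormalized. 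The new input must therefore extract the needed ``room'' from the singular part $w$ and from genuinely bidual (possibly corona-type) behaviour, building nets that simultaneously $w^*$-converge to $z''$ and retain enough structure at infinity to keep the value of some norm-one function near $1$ — plausibly working with functions of the form $\sum_i a_i\,e_i'(x)^{n_i}$ with $n_i\to\infty$ rather than honest polynomials, and exploiting the lack of symmetric regularity of $\ell_1$. Making this work uniformly over the blocks $\N_u$, and for an arbitrary $w$ (not merely a single ``ultrafilter'' direction), is where the bulk of the technical effort will go.
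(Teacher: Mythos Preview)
Your architecture (accumulate explicit evaluations over $z''$, separate by tailor-made norm-one functions, amplify via an interpolating sequence to get a copy of $\beta(\N)$, and conclude different Gleason parts via the sequential criterion) is exactly the right one, and it matches the paper's strategy. The gap is in your diagnosis of the $\ell_1$ obstruction and, consequently, in your proposed remedy.

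Two of your key claims are incorrect. First, the assertion that ``for any choice of degrees the normalized polynomial $r_j^{-l_j}s_j^{-k_j}R_jQ_j$ has norm of order at least $\|z''\|^{-(l_j+k_j)}$'' is false: Lemma~\ref{lem:Pinasco} with $p=1$ gives $\|R_jQ_j\|=r_j^{l_j}s_j^{k_j}$ exactly, so the normalized product has norm $1$, just as in the reflexive case. Second, the claim that ``no non-vanishing escaping mass can be invisible to $\ell_\infty$'' is also false: a \emph{single} bump $se_n$ is detected by $\mathbf 1$, but a \emph{signed} bump $\tfrac{s}{2}(e_n-e_m)$ has $\ell_1$-norm $s$ and is annihilated by $\mathbf 1$; in fact $0$ is a $w^*$-accumulation point of $\{e_n-e_m:n<m\}$ in $\ell_1''$ (pigeonhole on finitely many $y\in\ell_\infty$). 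So the fix is not to abandon polynomials or to rely on the singular part $w$ for ``room''; it is to replace $se_{n+1}$ by $\tfrac{s}{2}(e_{n_\alpha}-e_{m_\alpha})$ and to find a polynomial that peaks on such differences.

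Concretely, the paper builds the separating polynomial as a power of a quadratic form: $Q(x)=16\sum_{i<j}(x(i)x(j))^2$ has norm $1$ on $\overline B_{\ell_1}$ and attains it at $\tfrac12(e_n-e_m)$ (Lemma~\ref{lem:poli-norma-1-16}); then $Q_j=(Q)^{k_j/4}$ plays the role your $\sum_i e_i'(\cdot)^{k_j}$ played for $1<p<\infty$. The second new ingredient handles the singular part: one first finds an infinite $M\subset\N$ with $|\omega|(1_M)=0$ (Lemmas~4.3--\ref{lem:red-en-M-complemento}), so that $\omega$ is a $w^*$-limit of a net $(x_\beta)\subset\|\omega\|B_{\ell_1}$ supported in $M^c$, while the bumps $e_{n_\alpha}-e_{m_\alpha}$ and the polynomial $Q_j$ live entirely on $M$. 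The homomorphisms are then $\varphi^u=w^*\text{-}\lim_{(\alpha,\beta)}\delta_{\pi_{n_\alpha-1}(z)+x_\beta+\frac{s}{2}(e_{n_\alpha}-e_{m_\alpha})}$ with $n_\alpha<m_\alpha$ in a block $M_u\subset M$. From there your interpolation and Gleason-separation steps go through essentially as you describe (and as in Proposition~\ref{prop:resultado-ppal-ellp}), using honest homogeneous polynomials throughout; no recourse to functions with varying exponents or to corona-type phenomena is needed, and the argument works uniformly in $\omega$ (in particular when $\omega=0$).
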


Moreover, we extend \cite[Cor. 4.10]{DimLasMae} to every element in $S_{\ell_1''}$ whose projection on $c_0^\perp$ is an extreme point of the corresponding ball and some other points (see Proposition \ref{prop:enelborde} and Example \ref{ex-promedio} below).
As in the previous section, the proof of our main theorem requires several steps. Our argument deals again with a product of two homogeneous polynomials depending on different variables. Thus, we begin by computing the norm of the following prototypical polynomial.

\begin{lem}\label{lem:poli-norma-1-16}
    The polynomial $Q\in\mathcal P(^4\ell_1)$ given by $Q(x)=\sum_{i<j} (x(i)x(j))^2$ has norm $\frac{1}{16}$.
\end{lem}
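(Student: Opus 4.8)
The plan is to compute $\|Q\|$ for $Q(x)=\sum_{i<j}(x(i)x(j))^2$ by reducing the supremum over $B_{\ell_1}$ to a finite-dimensional optimization problem and then solving that with elementary calculus (Lagrange multipliers). The first observation is that $|Q(x)|\le\sum_{i<j}|x(i)|^2|x(j)|^2 = Q(|x|)$, where $|x|=(|x(i)|)_i$, so the norm is attained (in the limsup sense) on nonnegative sequences in $B_{\ell_1}$, i.e. we must maximize $\sum_{i<j}t_it_j$ with $t_i\ge 0$ and $\sum_i t_i\le 1$, where $t_i=x(i)^2$. Note that this last reformulation is not a change of variables we can invert freely — rather, $\sum_i|x(i)|\le 1$ does not directly bound $\sum_i t_i$ — so I would instead keep the variables as $x(i)\ge 0$ with $\sum_i x(i)\le 1$ and maximize $g(x)=\sum_{i<j}x(i)^2x(j)^2=\tfrac12\big((\sum_i x(i)^2)^2-\sum_i x(i)^4\big)$.

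Next I would argue that an optimizing configuration is supported on finitely many coordinates — in fact on exactly two — by a symmetry/convexity argument: for fixed value of $\sum x(i)$ on a given finite support, spreading mass across more coordinates only decreases $\sum_i x(i)^4$ relative to... actually the cleanest route is to show directly that the maximum over all of $B_{\ell_1}$ is attained with at most two nonzero coordinates. Suppose three coordinates $a,b,c>0$ are nonzero; holding $a+b+c$ fixed and viewing $g$ as a function of how the mass $b+c$ is split, one checks that the relevant partial expression is maximized at an endpoint (one of $b,c$ equal to $0$) or at $b=c$; iterating and comparing the $b=c$ interior critical points against two-point configurations shows two coordinates always suffice. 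With two coordinates $x(1)=a$, $x(2)=b$, $a+b\le 1$ (and clearly we take $a+b=1$), we maximize $a^2b^2=(ab)^2$ subject to $a+b=1$, giving $ab\le 1/4$, hence $a^2b^2\le 1/16$, with equality at $a=b=1/2$. This yields $\|Q\|\le\frac1{16}$, and evaluating at $x=\tfrac12 e_1+\tfrac12 e_2$ gives $Q(x)=\tfrac1{16}$, so $\|Q\|=\frac1{16}$.

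The step I expect to be the main (if modest) obstacle is the reduction to two coordinates: one must rule out that spreading the mass over three or more coordinates does better, and the bookkeeping with $\sum x(i)^4$ requires a little care since that term is being subtracted. A clean way to finish is to note that on the simplex $\{x\ge 0,\ \sum x(i)=1\}$ restricted to any finite support of size $n$, the function $(\sum x(i)^2)^2 - \sum x(i)^4$ has its critical points where all nonzero coordinates are equal (by Lagrange multipliers, $4 x(i)(\sum x(j)^2) - 4x(i)^3 = \mu$ forces $x(i)$ to take at most two values among the active ones, and a short comparison eliminates the two-value case), so the candidate maxima are $x(i)=1/n$ on $n$ coordinates, giving value $\tfrac12(n^{-2}-n^{-3})\cdot$ — wait, $(\sum x_i^2)^2-\sum x_i^4 = (1/n)^2 - n\cdot(1/n)^4 = 1/n^2 - 1/n^3 = (n-1)/n^3$, and $g = \tfrac12(n-1)/n^3$, which is maximized at $n=2$ with value $\tfrac12\cdot\tfrac1{8}=\tfrac1{16}$. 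This confirms $\|Q\|=\frac1{16}$ and is probably the cleanest way to present the whole argument, bypassing the ad hoc three-coordinate analysis entirely.
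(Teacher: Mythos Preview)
Your proof is correct and follows essentially the same route as the paper: reduce to nonnegative coordinates, show that any maximizer has all its nonzero coordinates equal, and then compare the values $\tfrac{n-1}{2n^3}$ at the uniform points $(\tfrac1n,\dots,\tfrac1n,0,\dots)$ to see the maximum occurs at $n=2$. The one step you leave to ``a short comparison'' --- ruling out two distinct positive values at a critical point --- is exactly what the paper does via the two-coordinate perturbation $x_t=(k+t,k-t,a_3,\dots)$, where $f'(t)=4t\big(t^2-k^2+\sum_{j\ge 3}a_j^2\big)$ forces the maximum in $t$ to occur at $t=0$ or at an endpoint.
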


\begin{proof}
 It is enough to see that any truncation $Q_N(x)=\sum_{i<j\le N} (x(i)x(j))^2$ of $Q$ attains its norm at $x_0=(\frac12,\frac12,0,0,\dots)$.  Note that $Q_N$ is invariant under permutations on the first $N$ coordinates and that $Q_N(x)=0$ for any $x$ with only one non-zero coordinate. Since we also have
 $$|Q_N(x)|\le \sum_{i<j\le N} (|x(i)| \,|x(j)|)^2 = Q_N(|x(1)|,|x(2)|,\dots), $$ 
it is enough to consider elements with non-negative coordinates having (at least) two of them non-zero. First, let us show that if $Q_N$ attains its norm at such a point, then all its non-zero coordinates must be equal. For that, due to the symmetry of $Q_N$, it suffices to prove that the first two coordinates are equal or that one of them is zero.   
We can write $x_t=(k+t,k-t, a_3, a_4, a_5,\dots) $, where $k\ge 0,\ |t|\le k$,
$a_j\ge 0$ for $j\ge 3$  and $\|x_t\|=2k+\sum_{j\ge 3} a_j=1$. If $k=0$, the first two coordinates are zero, and we are done. Thus, we assume $k>0$ and define 
$$f(t):=Q_N(x_t)= (k+t)^2 (k-t)^2 + (k+t)^2 \sum_{3\le j\le N} a_j^2 + (k-t)^2 \sum_{3\le j\le N} a_j^2  +  \sum_{3\le i<j\le N} a_i^2 a_j^2.$$ Differentiating, we get 
$$f'(t)= 4t\ \Big( t^2-k^2+ \sum_{3\le j\le N} a_j^2\Big).$$
If there is a local maximum, it is attained at  $t=0$. The other two roots of \( f' \), if they exist, must be local minima. Thus, the global maximum of \( f \) on $[-k,k]$ is attained either at zero or at the endpoints, meaning that either the first two coordinates of $x_t$ are equal or one of them is zero, as we wanted.

Now that we know that $Q_N$ attains its norm at a point where all the non-zero coordinates are equal, we observe that for $n\ge 2$ we have 
$$Q_N \Bigg( \underbrace{\frac{1}{n}, \dots, \frac{1}{n}}_{n \text{ times}}, 0, \dots \Bigg)\le {n\choose 2} \frac 1 {n^4} = \frac{n-1}{2n^3}\le \frac 1 {16}= Q_N\Bigg(\frac 1 2, \frac 1 2,0,\dots\phantom{\frac{1^1}1\hspace{-1em}}\Bigg).
$$
This completes the proof.
\end{proof}

Now, since $\ell_1''=\ell_1\oplus_1 c_0^\perp$, we can write each $z''\in B_{\ell_1''}$ as $z''=z+\omega$, with $z\in B_{\ell_1}$, $\omega\in B_{c_0^\perp}$ and $\|z''\|=\|z\| +\|\omega\|$. Our next step is to show that the elements in $c_0^\perp$ are weak-star limits of convenient nets of points from $B_{\ell_1}$.

Let us fix some notation. Recall that $\ell_1''=\ell_\infty'=C(\beta\mathbb N)'$ is described as the space of regular Borel measures on $\beta\N$. Considering this representation,  we say that an element of $\ell_1''$ (or of $c_0^\perp$) is positive if it is a positive measure. The notation $|z''|$ alludes to the measure given by the total variation of (the measure) $z''\in \ell_1''$.
Given a subset \( M \subset \mathbb{N} \), we denote by \( 1_M \) the sequence in \( \ell_\infty \) that takes the value $1$ on \( M \) and 0 elsewhere. For each \( x \in \ell_\infty \), we write \( 1_M \cdot x \) for the sequence that agrees with \( x \) on \( M \) and is zero elsewhere. We begin with a property of positive measures in \( c_0^\perp \).

\begin{lem}
   For each positive $\omega\in {c_0^\bot}$ there exists an infinite subset $M\subset \N$ such that $\omega(1_M)=0$.
\end{lem}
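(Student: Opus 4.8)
The statement asserts that a positive measure $\omega$ supported on $\beta\N$ (and living in $c_0^\perp$, so vanishing on every $c_0$ sequence, equivalently on every finitely supported sequence and on $e_n'$ for each $n$) must give zero mass to some infinite subset $M\subset\N$, in the sense $\omega(1_M)=0$. The key observation is that the function $n\mapsto \omega(1_{\{1,\dots,n\}})$ is increasing and bounded, hence converges; but since $\omega\in c_0^\perp$ it must vanish on each singleton $\{n\}$, and more is true: we will show the ``mass'' $\omega$ places on initial segments of $\N$ is actually $0$. Indeed, for every finite set $F$, $1_F\in c_0$, so $\omega(1_F)=0$; thus $\omega$, regarded as a positive measure on $\beta\N$, assigns measure zero to $\N$ itself (the countable union of singletons), i.e. $\omega$ is entirely concentrated on the remainder $\beta\N\setminus\N$.

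From here the natural approach is the following. Write $\omega$ as a positive regular Borel measure on $\beta\N$ with $\omega(\N)=0$. For a subset $M\subseteq\N$, the element $1_M\in\ell_\infty=C(\beta\N)$ corresponds to the continuous function equal to the indicator of $\overline{M}$ (the closure of $M$ in $\beta\N$, which is clopen), and $\omega(1_M)=\omega(\overline M)=\omega(\overline M\setminus\N)=\omega(\overline M\cap(\beta\N\setminus\N))=\omega(M^*)$, where $M^*:=\overline M\setminus\N$ denotes the trace of $M$ on the remainder. So I must produce an \emph{infinite} $M\subseteq\N$ with $\omega(M^*)=0$. The plan is to split $\N$ into infinitely many pairwise disjoint infinite pieces, $\N=\bigsqcup_{k\in\N}M_k$. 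Then the sets $M_k^*$ are pairwise disjoint clopen subsets of the remainder (disjointness on $\N$ passes to disjointness of the traces on $\beta\N\setminus\N$), all measurable, and $\sum_k \omega(M_k^*)\le \omega(\beta\N)=\|\omega\|<\infty$. Hence $\omega(M_k^*)\to 0$, and in particular $\omega(M_{k_0}^*)$ can be made as small as we like — but we need it to be exactly $0$, which a single such partition does not immediately give.

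To get value exactly $0$ I would iterate. Suppose towards a contradiction that $\omega(1_M)>0$ for \emph{every} infinite $M\subseteq\N$. Start with $M_0=\N$, so $\omega(1_{M_0})=\|\omega\|>0$ (recall $\omega\ge0$ and $\omega\in c_0^\perp$ forces $\omega(1_\N)=\omega(\beta\N)=\|\omega\|$). Split $M_0$ into two infinite disjoint halves $A$ and $B$; since $\omega(1_A)+\omega(1_B)=\omega(1_{M_0})$, at least one of them, call it $M_1$, has $\omega(1_{M_1})\le \tfrac12\omega(1_{M_0})$, and it is infinite so by hypothesis $\omega(1_{M_1})>0$. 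Iterating, we obtain a decreasing chain of infinite sets $M_0\supseteq M_1\supseteq M_2\supseteq\cdots$ with $0<\omega(1_{M_k})\le 2^{-k}\|\omega\|$. Now choose $n_k\in M_k$ with $n_1<n_2<\cdots$ and set $M=\{n_k:k\ge1\}$, an infinite subset of $\N$. For each fixed $j$, the tail $\{n_k:k\ge j\}$ is contained in $M_j$, so $M\subseteq M_j\cup\{n_1,\dots,n_{j-1}\}$; since finite sets carry zero $\omega$-mass, $\omega(1_M)=\omega(M^*)\le \omega(M_j^*)=\omega(1_{M_j})\le 2^{-j}\|\omega\|$. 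Letting $j\to\infty$ gives $\omega(1_M)=0$, contradicting the assumption and completing the proof.

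\textbf{Main obstacle.} The only subtle point is the passage from ``arbitrarily small'' to ``exactly zero'': a finite partition only yields small mass, so the iterative halving-plus-diagonalization is essential, and one must check carefully that the diagonal set $M$ sits, up to a finite set, inside each $M_j$ — which is where the hypothesis $\omega\in c_0^\perp$ (killing finite sets) is used decisively. Everything else — that $1_M$ corresponds to a clopen set, that disjoint subsets of $\N$ have disjoint traces on $\beta\N\setminus\N$, that $\omega(\N)=0$ — is routine Stone–\v Cech bookkeeping, and positivity of $\omega$ is what makes $\omega(1_A)+\omega(1_B)=\omega(1_{A\cup B})$ usable as a genuine splitting of a nonnegative quantity.
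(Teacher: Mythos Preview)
Your proof is correct and follows essentially the same route as the paper: iteratively split an infinite set into two infinite halves, keep the half with the smaller $\omega$-mass to get a decreasing chain $M_0\supseteq M_1\supseteq\cdots$ with $\omega(1_{M_k})\le 2^{-k}\|\omega\|$, then diagonalize by picking one point $n_k\in M_k$ and use that $\omega$ vanishes on finite sets to conclude $\omega(1_M)=0$ for $M=\{n_k\}$. The only cosmetic differences are that the paper works directly (no contradiction hypothesis is needed, since your construction never actually uses the assumption $\omega(1_{M_k})>0$) and omits the Stone--\v Cech bookkeeping, arguing purely with positivity and the identity $\omega(1_A)+\omega(1_B)=\omega(1_{A\cup B})$ for disjoint $A,B$.
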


\begin{proof}
    We may assume $\|\omega\|=1$. We write $\N$ as a disjoint union of two infinite subsets $N_1$ and $N_2$. Since  $1 = \omega(1_{\N}) = \omega(1_{N_1}) + \omega(1_{N_2})$ and $\omega\ge 0 $  we have  that either $\omega(1_{N_1})$ or $ \omega(1_{N_2})$ is at most $\frac12$. We set $M_1=N_1$ if $\omega(1_{N_1})\le \frac12$ and $M_1=N_2$ otherwise.  We now split $M_1$ as a disjoint union of   $N^1_1$ and $N^1_2$, both infinite, and observe that
    $$\frac 1 2  \ge  \omega(1_{M_1}) = \omega(1_{N^1_1}) + \omega(1_{N^1_2}).$$ 
    So either $\omega(1_{N_1^1})$ or $ \omega(1_{N_2^1})$ is at most $\frac14$ and we set $M_2=N^1_1$ if $\omega(1_{N^1_1})\le \frac14$ and $M_2=N^1_2$ otherwise. Inductively, we get a sequence $M_1\supset M_2 \supset M_3 \supset \cdots $ satisfying $\omega(1_{M_k})\le \frac{1}{2^k}$ for every $k$. 

    Given that each $M_k$ is infinite, we can choose (inductively) a strictly increasing sequence $(m_k)_k \subset \N$ with $m_k\in M_k$ for every $k$. Let us see that the set $M= \{m_k:k\in \N\}$ satisfies  the desired property. For each $k\in \N$, using first that  $\omega$ belongs to $c_0^\bot$ and then that $\omega$ is a positive measure we get
    \begin{eqnarray*}
        \omega(1_M) &= &\omega(1_{\{m_1,\dots,m_k\}}) + \omega(1_{\{m_{k+1},m_{k+2}, \dots\}}) = \omega(1_{\{m_{k+1},m_{k+2}, \dots\}}) \\
        &\le &  \omega(1_{M_{k+1}}) \le \frac{1}{2^{k+1}}. 
    \end{eqnarray*} 
    Since this holds for every $k$, we have $\omega(1_M)=0$ as desired. 
\end{proof}

Now we are ready to show that elements in $B_{c_0^\bot}$ are weak-star limits of appropriate nets.

\begin{lem}\label{lem:red-en-M-complemento}
     For each $\omega\in B_{c_0^\bot}$, there exists an infinite subset $M\subset \N$ and a net $\{x_\beta\}_\beta \subset \|\omega\|B_{\ell_1}$ weak-star converging  to $\omega$  such that $\supp (x_\beta)\subset M^c$ for every $\beta$.
\end{lem}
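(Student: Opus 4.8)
The plan is to show that $\omega$ is, in a precise sense, supported away from a fixed infinite set $M\subseteq\N$, and then to produce the approximating net by applying Goldstine's theorem inside the copy of $\ell_1$ generated by the coordinates outside $M$.

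First I would dispose of the trivial case $\omega=0$ (take the constant net $0$ and any infinite $M$). For $\omega\neq 0$, note that the total variation $|\omega|$ is a positive element of $c_0^\perp$: indeed $|\omega|(\{n\})=|\omega(\{n\})|=|\omega(e_n)|=0$ for each $n$. So the preceding lemma supplies an infinite $M\subseteq\N$ with $|\omega|(1_M)=0$. From this I would deduce that $\omega$ annihilates the subspace $\ell_\infty(M):=\{1_M\cdot x:x\in\ell_\infty\}$, using $|\omega(y)|\le|\omega|(|y|)\le\|y\|_\infty\,|\omega|(1_M)=0$ for $y\in\ell_\infty(M)$. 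Equivalently, writing $r\colon\ell_\infty\to\ell_\infty(M^c)$ for the restriction map $x\mapsto(x(n))_{n\in M^c}$, we have $\omega\in(\ker r)^\perp$.

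Next, since $\ell_\infty=\ell_\infty(M)\oplus_\infty\ell_\infty(M^c)$, the map $r$ is a metric surjection with $\ker r=\ell_\infty(M)$, so its adjoint $r^*\colon\ell_\infty(M^c)'\to\ell_\infty'=\ell_1''$ is an isometric embedding onto $(\ker r)^\perp$. Thus there is a unique $\widetilde\omega\in\ell_\infty(M^c)'=\ell_1(M^c)''$ with $r^*\widetilde\omega=\omega$ and $\|\widetilde\omega\|=\|\omega\|$. Applying Goldstine's theorem to $\ell_1(M^c)$ yields a net $\{x_\beta\}_\beta\subset\|\omega\|B_{\ell_1(M^c)}$ with $x_\beta\to\widetilde\omega$ in $\sigma(\ell_1(M^c)'',\ell_\infty(M^c))$. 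Viewing each $x_\beta$ inside $\ell_1$ (extended by $0$), one has $x_\beta\in\|\omega\|B_{\ell_1}$, $\supp(x_\beta)\subseteq M^c$, and for every $x\in\ell_\infty$,
$$\langle x,x_\beta\rangle=\sum_{n\in M^c}x(n)\,x_\beta(n)=\langle r(x),x_\beta\rangle\ \underset{\beta}{\longrightarrow}\ \widetilde\omega(r(x))=(r^*\widetilde\omega)(x)=\omega(x),$$
so $\{x_\beta\}_\beta$ is weak-star convergent to $\omega$, which is what we want.

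I expect the whole argument to be ``soft'': there are no estimates beyond what is already in the preceding lemma and in Goldstine's theorem. The only place requiring care is the middle step — checking that $r$ is a metric surjection with kernel exactly $\ell_\infty(M)$, that $r^*$ is an isometry onto $(\ker r)^\perp$, and that the pullback of $\widetilde\omega$ is genuinely $\omega$ — since once those identifications are pinned down, the transfer of weak-star convergence in the last display is automatic.
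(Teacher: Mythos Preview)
Your proposal is correct and follows essentially the same approach as the paper: both use the preceding lemma to find an infinite $M$ with $|\omega|(1_M)=0$, deduce that $\omega$ ignores the coordinates in $M$, and then invoke Goldstine. The only cosmetic difference is the order of operations in the last step --- you pass to $\ell_1(M^c)$ first and apply Goldstine there, while the paper applies Goldstine in all of $\ell_1$ and then truncates the resulting net to $M^c$ via $x_\beta=1_{M^c}\cdot\widetilde x_\beta$.
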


\begin{proof}
    First,  observe that if $\omega$ belongs to $ B_{c_0^\bot}$, then so does $|\omega|$. The previous lemma provides us with an infinite set $M\subset \N$  such that $|\omega|(1_M)=0$. Note that for any $y\in \ell_\infty$ we have 
    $$|\omega(1_M\cdot y)| \le |\omega|(1_M\cdot |y|) \le  \|y\|_\infty |\omega|(1_M)=0.$$ 
    As a direct consequence, we get $\omega(y)=\omega(1_{M^c} \cdot y)$ for every $y\in \ell_\infty$.

  By Goldstine, take any net $\{\widetilde x_\beta\}_\beta\subset \|\omega\|B_{\ell_1}$ weak-star converging  to $\omega$ and, for each $\beta$,  define $x_\beta= 1_{M^c}\cdot \widetilde x_\beta, $ which is trivially  supported on $M^c$. To finish the proof, we notice that for every $y\in \ell_\infty$ we have
    $$
        y(x_\beta) = y ( 1_{M^c}\cdot \widetilde x_\beta ) =  ( 1_{M^c}\cdot y) ( \widetilde x_\beta ) \\
         \to \omega( 1_{M^c}\cdot y) = \omega(y). \qedhere
    $$
     
\end{proof} 

As in the previous section, on the way to our main result, we first prove that in the fiber over any interior point $z''\in B_{\ell_1''}$, there exists a homomorphism $\varphi$ belonging to a different Gleason part than $\delta_{z''}$.This result already extends \cite[Cor. 4.10]{DimLasMae} in the setting of fibers over points in the ball. For the corresponding extension of \cite[Cor. 4.10]{DimLasMae} in points on the sphere, see Proposition \ref{prop:enelborde} below.

\begin{prop}\label{prop:ele-1-morfismo-en-la-fibra}
    For every   $z''\in B_{\ell_1''}$,   the fiber $\M_{z''}$ in $\M(\A_u(B_{\ell_1}))$ contains an element $\varphi$ with $\GP(\varphi)\not=\GP(\delta_{z''})$.
\end{prop}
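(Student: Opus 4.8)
The plan is to adapt the strategy used in the reflexive case (Propositions \ref{prop:distinta-gleason} and \ref{prop:ele-p-no-racional}) to the non-reflexive, non-symmetrically-regular setting of $\ell_1$, using Lemma \ref{lem:poli-norma-1-16} as the $\ell_1$-analogue of Lemma \ref{lem:Pinasco}. Write $z''=z+\omega$ with $z\in B_{\ell_1}$, $\omega\in B_{c_0^\perp}$ and $\|z''\|=\|z\|+\|\omega\|<1$. Since $\GP(\delta_{z''})=\GP(\delta_0)$ by Remark \ref{rmk: PG-fibras}, it suffices to exhibit a homomorphism $\varphi\in\M_{z''}$ together with a sequence of homogeneous polynomials $P_j$ with $\|P_j\|=1$ (so $\delta_0(P_j)=P_j(0)=0$) and $\varphi(P_j)\to 1$. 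The homomorphism $\varphi$ will be a $w^*$-accumulation point of evaluations $\delta_{x_\beta + s e_{n_\beta}}$ (or a similar construction), where $\{x_\beta\}$ is the net from Lemma \ref{lem:red-en-M-complemento} that $w^*$-converges to $\omega$ with $\supp(x_\beta)\subset M^c$ for an infinite set $M\subset\N$, and $s$ is a suitable parameter so that $\|z''\|<\|z\|+s\cdot(\text{something})+\dots$ still leaves room inside the ball; the vectors $e_{n_\beta}$ should be taken with $n_\beta\in M$ so that they are supported away from both $z$ and $x_\beta$.

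First I would set up the polynomials. Since $\|z''\|<1$, pick a margin: choose vectors of the form $z + x_\beta + t\, e_{n}$ for indices $n\in M$ (or pairs of such canonical vectors, to exploit the degree-$4$ polynomial $Q$ of Lemma \ref{lem:poli-norma-1-16}), with $t>0$ chosen so that these vectors stay in $\overline B_{\ell_1}$; as $x_\beta$ ranges over the Goldstine net and $n$ ranges over $M$, all such evaluations lie in $\M_{z''}$ in the limit because $z+x_\beta+t e_n \overset{w^*}{\to} z+\omega=z''$ (the tail terms go weakly to zero) — here one uses that $\varphi|_{X'}$ is determined by testing against $e_i'\in\ell_\infty=\ell_1'$. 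Then, mimicking the product construction, take $R$ a norm-one homogeneous polynomial depending only on coordinates where $z$ (and the bulk of $x_\beta$, i.e.\ $M^c$) lives and attaining its norm there, and $Q$ a norm-one polynomial of the form $\sum$ over disjoint pairs of indices in $M$ of $(x(i)x(j))^2$ (normalised via Lemma \ref{lem:poli-norma-1-16}), which attains its norm at $t(e_{n}+e_{n'})$ for $n,n'\in M$. The normalised product $P_j := c_j\, R_j Q_j$ has norm one by a disjoint-supports argument analogous to Lemma \ref{lem:Pinasco} (one needs the $\ell_1$ multiplicativity of norms for polynomials in disjoint variables, which for this specific $Q$ follows from Lemma \ref{lem:poli-norma-1-16}), and $\varphi(P_j)\to 1$ because $\delta_{z+x_\beta+t(e_n+e_{n'})}(P_j)$ converges to $1$ after the appropriate normalisation and passage to the limit in $\beta$, then in $j$. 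This shows $\GP(\varphi)\neq\GP(\delta_0)=\GP(\delta_{z''})$.

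The main obstacle I anticipate is twofold. The more serious point is the norm computation for the product $R_jQ_j$: Lemma \ref{lem:Pinasco} is stated for $\ell_p$ and its clean closed form does not literally transfer to $\ell_1$ in the generality needed, so one must either restrict to a specific shape of polynomials for which Lemma \ref{lem:poli-norma-1-16} (and a Lagrange-multiplier argument, as in the proof of Theorem \ref{thm:strong boundary points}) gives the exact constant, or establish a separate $\ell_1$-product lemma; getting the constants to line up so that $\varphi(P_j)\to 1$ rather than to something strictly less than $1$ is the delicate part. The secondary obstacle is bookkeeping with the bidual: one must be careful that the net $\{x_\beta+\dots\}$ genuinely $w^*$-converges to a point whose restriction to $\ell_\infty$ equals $z''$, and that the accumulation point $\varphi$ indeed lies in $\M_{z''}$ — this is where Lemma \ref{lem:red-en-M-complemento} is essential, since it lets us choose the canonical tail vectors $e_n$ with $n\in M$, guaranteeing disjointness from $\supp(x_\beta)\subset M^c$ and hence that adding them does not disturb the $w^*$-limit computed against $\ell_\infty$. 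Once these two points are handled, the rest is routine and parallels the $1<p<\infty$ case.
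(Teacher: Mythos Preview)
Your outline is essentially the paper's proof: take $\varphi$ to be a $w^*$-limit of evaluations at points of the form $\tc{z}{n_\alpha-1}+x_\beta+\tfrac{s}{2}(e_{n_\alpha}-e_{m_\alpha})$ with $n_\alpha<m_\alpha\in M$ and $\{x_\beta\}$ the net from Lemma~\ref{lem:red-en-M-complemento}, and test against $P_j=r_j^{-l_j}s_j^{-k_j}R_jQ_j$ where $R_j=(z_j')^{l_j}$ for a suitable $z_j'\in S_{\ell_\infty}$ supported in $\{1,\dots,m_j\}\cup M^c$, and $Q_j$ is the $(k_j/4)$-th power of the normalised polynomial of Lemma~\ref{lem:poli-norma-1-16} restricted to indices in $M$ beyond $m_j$.

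The obstacle you single out as ``more serious'' is, however, not there: Lemma~\ref{lem:Pinasco} is stated for $1\le p<\infty$ and therefore applies directly to $\ell_1$, giving
\[
\|R_jQ_j\|=\frac{l_j^{l_j}k_j^{k_j}}{(l_j+k_j)^{l_j+k_j}}\,\|R_j\|\,\|Q_j\|
\]
with no extra work. Lemma~\ref{lem:poli-norma-1-16} is used only to get $\|Q_j\|=1$, not for the product norm. Once you accept this, your Dirichlet choices of $l_j,k_j$ (with $4\mid k_j$) together with an $m_j$ growing fast enough that $s_j^{-k_j}L(k_j)\|\tcm{z}{m_j}\|\to 0$ give $\varphi(P_j)\to 1$ exactly as in Proposition~\ref{prop:ele-p-no-racional}. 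One detail worth making explicit in your write-up: the functionals $z_j'$ must satisfy $|z''(z_j')|\to\|z''\|$ (not merely $\|z\|$), which is why their support must include $M^c$ in addition to an initial segment of~$\N$; this is implicit in your sketch but is what makes the constants line up.
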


\begin{proof}
We write $z''=z+\omega$ with $z\in B_{\ell_1}$, $\omega\in B_{c_0^\perp}$ and $\|z''\|=\|z\|+\|\omega\|$. By the previous lemma, there is an infinite set $M\subset \N$ and a net $\{x_\beta\}_\beta \subset \|\omega\| B_{\ell_1}$  such that $\omega=w^*-\lim x_\beta$ and $\supp (x_\beta)\subset M^c$ for every $\beta$. 

Let $r=\|z''\|<1$ and $s=1-r$. By Dirichlet's approximation theorem, as in the proof of Proposition \ref{prop:ele-p-no-racional},  
      we can take sequences $(k_j), (l_j)\subset\N$ such that  
 $$\frac{l_j}{l_j+k_j}\underset{j\to\infty}{\rightarrow}r \quad\text{ and }\quad (l_j+k_j)r-l_j\underset{j\to\infty}{\rightarrow} 0.$$ Eventually by multiplying the sequences $(k_j)$ and $(l_j)$ by 4, we may assume that $k_j$ is a multiple of 4  for all $j$, condition that we will need later on to define some polynomials.

Note that $\frac{k_j}{l_j+k_j}\underset{j\to\infty}{\rightarrow}s=1-r$ and also  $(l_j+k_j)s-k_j\underset{j\to\infty}{\rightarrow} 0$. Now, writing
$
r_j=\frac{l_j}{l_j+k_j}$ and $s_j=\frac{k_j}{l_j+k_j}$, we have as in \eqref{eqn:r-s-lim-1},
\begin{equation}\label{eqn:r-s-lim-1-bis2}
 \left(\frac{r}{r_j}\right)^{l_j} \underset{j\to\infty}{\longrightarrow} 1 \quad \text{ and } \quad  \left(\frac{s}{s_j}\right)^{k_j} \underset{j\to\infty}{\longrightarrow} 1.   
\end{equation}

Given $m\in\N$, recall from  Notation \ref{not:truncados} that $\tc{z}{m} = (z(1), z(2), \ldots, z(m), 0, 0, \ldots)$ and $\tcm{z}{m}=(0,\dots, 0, z(m+1), z(m+2),\dots)$. We can take an increasing sequence of numbers $m_j\in \N$ 
satisfying
\begin{equation}\label{eqn:condicion-en-los-mj}
 \frac{L(k_j)}{s_j^{k_j}}\|\tcm{z}{m_j}\|\rightarrow 0, 
\end{equation}
where $L(k_j)$ is the Lipschitz constant from \eqref{eqn:Lipschitz}. 

Now,  we choose a sequence $(z_j')_j\subset S_{\ell_\infty}$ such that
$|z''(z'_j)|\rightarrow \|z''\|=r.$
Since $z$ is a fixed element of $\ell_1$ and $\supp(x_\beta)\subset M^c$, we may assume that 
$$\supp(z'_j)\subset \{1,\ldots, m_j\}\cup M^c.$$
Also, passing to a subsequence if necessary, we can suppose that
\begin{equation}\label{eqn:condicion-en-los-zjprima}
 \left(\frac{|z''(z'_j)|}{r}\right)^{l_j}\rightarrow 1.   
\end{equation}

Recalling that $0\in w^*-ac\{e_n-e_m:\, n<m\in M\}$ we take a net $\{e_{n_\alpha}-e_{m_\alpha}\}_{\alpha\in \Lambda}$ with $n_\alpha< m_\alpha\in M$ for all $\alpha$ such that $e_{n_\alpha}-e_{m_\alpha}\overset{w^*}{\longrightarrow} 0$. Then, if we consider the order  $(\alpha, \beta) \leq (\alpha', \beta')$ whenever $\alpha\leq\alpha'$ and $\beta\leq \beta'$, we have $y_{\alpha\beta}=\tc{z}{n_\alpha-1}+x_\beta+\frac{s}{2} (e_{n_\alpha}-e_{m_\alpha})\overset{w^*}{\longrightarrow}  z''$. Note that $\|y_{\alpha\beta}\|\le \|z\| + \|\omega\| + s=\|z''\| + s=1$, for all $\alpha$, $\beta$. Eventually taking a subnet we may assume that $\delta_{y_{\alpha\beta}}$ converges, and denote
$$ \varphi:= w^*-\lim_{(\alpha,\beta)} \delta_{y_{\alpha\beta}}\in \M_{z''}.$$
Next define, for each $j\in\N$,  polynomials $R_j\in\mathcal P(^{l_j}\ell_1)$  and $Q_j\in\mathcal P(^{k_j}\ell_1)$ by $$R_j(x) = (z'_{j}(x))^{l_j} \quad \text{and} \quad Q_j(x) = \Big(16\displaystyle\sum_{\substack{m_j<i<t \\ i, t\in M}} (e_i'(x)e_t'(x))^2\Big)^{\frac{k_j}{4}}.$$ 
By Lemma \ref{lem:poli-norma-1-16}, $\|R_j\|=\|Q_j\|=1$. Note that each $R_j$ is a  power of a linear functional, hence  $\varphi(R_j)=z''(z'_j)^{l_j}.$ On the other hand, since $Q_j$ does not depend on the first $m_j$ variables nor the variables on $M^c$,  we have
\begin{eqnarray*}
   |\delta_{y_{\alpha\beta}} (Q_j) -s^{k_j}| &=& |Q_j\left((\tc{z}{n_\alpha-1}-\tc{z}{m_j})+\frac{s}{2} (e_{n_\alpha}-e_{m_\alpha})\right)-s^{k_j} |\\
  & =& \left| Q_j\left((\tc{z}{n_\alpha-1}-\tc{z}{m_j})+\frac{s}{2} (e_{n_\alpha}-e_{m_\alpha})\right)-Q_j\left(\frac{s}{2} (e_{n_\alpha}-e_{m_\alpha})\right) \right|\\
   &\leq&  L(k_j) \|\tc{z}{n_\alpha-1}-\tc{z}{m_j}\| \\
   &\leq&  L(k_j) \|\tcm{z}{m_j}\|. \
\end{eqnarray*}
Then $|\varphi(Q_j) - s^{k_j}|\leq   L(k_j) \|\tcm{z}{m_j}\|$ and, by \eqref{eqn:condicion-en-los-mj},
\begin{equation}\label{eqn:lim-0}
\Bigg|\frac{\varphi(Q_j) - s^{k_j}}{s_j^{k_j}}\Bigg|\leq \frac{L(k_j)}{s_j^{k_j}}\|\tcm{z}{m_j}\|\underset{j\to\infty}{\longrightarrow} 0.
\end{equation}

As before, we consider the polynomial $P_j\in\mathcal P(^{l_j+k_j}\ell_1)$ given by $P_j(x)={r_j^{-l_j} s_j^{-k_j}}R_j(x)Q_j(x)$. Arguing once again that the polynomials $R_j$ and $Q_j$ depend on different variables, we know, by Lemma \ref{lem:Pinasco}, that $\|P_j\|=\|R_j\|\|Q_j\|=1$. Also, appealing  to \eqref{eqn:r-s-lim-1-bis2}, \eqref{eqn:condicion-en-los-zjprima} and~\eqref{eqn:lim-0}, we have
\begin{eqnarray*}
    |\varphi(P_j)| &=& \frac{1}{r_j^{l_j} s_j^{k_j}} |z''(z'_j)|^{l_j} |\varphi(Q_j)|\\
    &\geq& \left(\frac{|z''(z'_j)|}{r}\right)^{l_j} \left(\frac{r}{r_j}\right)^{l_j} \left( \left(\frac{s}{s_j}\right)^{k_j} -\Bigg|\frac{\varphi(Q_j) - s^{k_j}}{s_j^{k_j}}\Bigg|\right)\underset{j\to\infty}{\longrightarrow} 1.
\end{eqnarray*}
Finally, since $P_j(0)=0$ for every $j$, we conclude that $\GP(\varphi)\not=\GP(\delta_0)=\GP(\delta_{z''})$, as desired.
\end{proof}

Now we are ready to prove the main result of this section.

\begin{proof} [Proof of Theorem \ref{thm:PG-puntos-interiores-ele-1}]
The general idea of the proof is similar to that of Proposition \ref{prop:resultado-ppal-ellp}. For completeness, we provide an outline here, omitting some of the finer details. As before, we write $z''=z+\omega$ with $z\in B_{\ell_1}$, $\omega\in B_{c_0^\perp}$ and $\|z''\|=\|z\|+\|\omega\|$. We know, by Lemma \ref{lem:red-en-M-complemento}, that there is an infinite set $M\subset \N$ and a net $\{x_\beta\}_\beta \subset B_{\ell_1}$ such that $\omega=w^*-\lim x_\beta$ and $\supp (x_\beta)\subset M^c$ for every $\beta$.

Let us write $M$ as a disjoint union of infinite sets
$M = \bigcup_{u=1}^\infty M_u$.
Similar to the proof of Proposition \ref{prop:ele-1-morfismo-en-la-fibra}, for each $u$, we can take
$$\varphi^u= w^*-\lim_{(\alpha,\beta)} \delta_{\tc{z}{n_{\alpha_u-1}}+x_\beta+\frac{s}{2}(e_{n_{\alpha_u}}-e_{m_{\alpha_u}})},$$
where once again we used Notation \ref{not:truncados} and $\{e_{n_{\alpha_u}}-e_{m_{\alpha_u}}\}_{\alpha_u}$  is a net which converges weakly to 0 with $n_{\alpha_u}<m_{\alpha_u}\in M_u$. 

Let us take $m_j$, $k_j$, $l_j$ and $z'_j$ as in Proposition \ref{prop:ele-1-morfismo-en-la-fibra}, and define for each $j\in\N$ the polynomials 
$$R_j(x) = (z'_j(x))^{l_j} \quad\text{ and }\quad Q_j^u(x) = \Big(16\displaystyle\sum_{\substack{m_j<i<t \\ i, t\in M_u}} (e_i'(x)e_t'(x))^2\Big)^{\frac{k_j}{4}},$$ for  $x\in\ell_1$.  Consider
\begin{equation}\label{eq-Vsubj} V_j^u(x):=Q_j^u(x-z)=Q_j^u(x-\tcm{z}{m_j}).\end{equation}
As in Proposition \ref{prop:resultado-ppal-ellp}, we obtain that $\|V_j^u-Q_j^u \|\leq 2^{k_j} L(k_j) \|\tcm{z}{m_j}\|,$
where $L(k_j)$ is a bound for the Lipschitz constant of a norm-one $k_j$-homogeneous polynomial. Next choose $m_j$ so that
$$
\frac{2^{k_j} L(k_j)}{s_j^{k_j}}\| \tcm{z}{m_j}\|    \underset{j\to\infty}{\longrightarrow}  0.
$$

Recall from the proof of Proposition \ref{prop:ele-1-morfismo-en-la-fibra} that we have $|\varphi^u (Q_j^u) -s^{k_j} |\leq L(k_j)\|\tcm{z}{m_j}\|$. Then,
\begin{eqnarray*}
   \left|\varphi^u\left(\frac{ R_j Q_j^u}{r_j^{l_j}s_j^{k_j}}\right)\right| 
   &\geq& \left(\frac{|z''(z'_j)|}{r}\right)^{l_j} \left(\frac{r}{r_j}\right)^{l_j} \left| \left(\frac{s}{s_j}\right)^{k_j} -\frac{\varphi(Q_j^u) - s^{k_j}}{s_j^{k_j}}\right| \\
   &\geq&  \left(\frac{|z''(z'_j)|}{r}\right)^{l_j} \left(\frac{r}{r_j}\right)^{l_j} \left( \left(\frac{s}{s_j}\right)^{k_j} -\frac{ L(k_j)\|\tcm{z}{m_j}\|}{s_j^{k_j}}\right) 
\end{eqnarray*}
which shows that $\varphi^u\left(\frac{ R_j Q_j^u}{r_j^{l_j}s_j^{k_j}}\right)$ converges to $1$, uniformly in $u$. As in the proof of Proposition \ref{prop:resultado-ppal-ellp}, this fact, together with the assumption on the sequence $m_j$ implies that the numbers $t^u_j:=\varphi^u\left(\frac{ R_j V_j^u}{r_j^{l_j}s_j^{k_j}}\right)$
are uniformly bounded away from $0$ for sufficiently large $j$ and so we choose $j$ such that $|t^u_j|\geq \frac{1}{2}$ for all $ u$.

Now, for $b\in\ell_\infty$ we define the continuous polynomial 
$$ S(x)= \frac{ R_j(x) }{r_j^{l_j}s_j^{k_j}} \sum_{u=1}^\infty \frac{b(u)}{t^u_j}  V_j^u(x) .$$
Let us see that $\varphi^u(S)=b(u)$ only for $u=1$; the other cases are analogous. By definition of $t_j^1$, we have 
\begin{equation}\label{eqn:phi_V1}
    \varphi^1\left(\frac{b(1)}{t_j^1} \frac{ R_j V_j^1}{r_j^{l_j}s_j^{k_j}}\right) = \frac{b(1)}{t_j^1} t_j^1  =b(1).
\end{equation}
For the other terms of $S$, note that each $V_j^u$ depends only on the variables on $M_u \subset M$, which is disjoint from $M_1$ ($u> 1$). In particular,  $M_u$ does not contain any $n_{\alpha_1}, m_{\alpha_1}$ nor any point of $\supp(x_\beta)$ and we have 
\begin{eqnarray*}
    \Bigg|\delta_{\tc{z}{n_{\alpha_1-1}}+x_\beta+\frac{s}{2}(e_{n_{\alpha_1}}-e_{m_{\alpha_1}})}\left(\sum_{u>1}^\infty \frac{b(u)}{t^u_j} V_j^u\right)\Bigg| &=&  \Bigg|\sum_{u>1}^\infty \frac{b(u)}{t^u_j} V_j^u(\tc{z}{n_{\alpha_1-1}})\Bigg| \\
    = \Bigg|\sum_{u>1}^\infty \frac{b(u)}{t^u_j} Q_j^u(\tc{z}{n_{\alpha_1-1}}-z)\Bigg| &\leq& 2 \|b\|_\infty \left\|\sum_{u>1}^\infty Q_j^u\right\| \|\tc{z}{n_{\alpha_1-1}}-z \|^{k_j}\underset{\alpha_1\to\infty}{\longrightarrow} 0.\
\end{eqnarray*}
As a consequence, 
\begin{equation}\label{eqn:phi0env}    
\varphi^1 \left(\sum_{u>1}^\infty \frac{b(u)}{t^u_j} \frac{ R_j V_j^u}{r_j^{l_j}s_j^{k_j}} \right)=\frac{ \varphi^1(R_j)}{r_j^{l_j}s_j^{k_j}} \varphi^1 \left( \sum_{u>1}^\infty \frac{b(u)}{t^u_j} V_j^u \right)=0
\end{equation} and this, together with \eqref{eqn:phi_V1}, shows that $\varphi^1(S)=b(1)$.

Then $\{\varphi^u\}$ is an interpolating sequence, and its closure is homeomorphic to the Stone–\v{C}ech compactification of the natural numbers, $\beta(\N)$. 

To end this proof let us take $\psi_1\neq\psi_2 \in w^*-ac\{ \varphi^u : u\in \mathbb{N}\}$ and  show that $\GP(\psi_1)\not=\GP(\psi_2)$. We can find disjoint sets $\N_1\neq \N_2\subset  \N$ such that $\psi_i\in w^*-ac\{ \varphi^u : u\in \mathbb{N}_i\}$. Now let us consider the polynomials
$$S_j(x)=\frac{ R_j(x)}{r_j^{l_j}s_j^{k_j}} \sum_{v\in \N_1}^\infty Q_j^v(x) .$$
Since the polynomials  $Q^v_j$ have disjoint support and we are on $\ell_1$, it is easy to see that  
$\sum_{v\in\N_1}    Q_j^v(x)$
is continuous with norm one. By Lemma \ref{lem:Pinasco} we conclude $\|S_j\|=\|R_j\|  \left\| \sum_{v\in \N_1}^\infty Q_j^v\right\|=1$.
Thus, it is enough to show that $\psi_1(S_j)$ converges to $1$ while $\psi_2(S_j)$ does not.

We already know that $\varphi^u\left(\frac{ R_j Q_j^u}{r_j^{l_j}s_j^{k_j}}\right)\underset{j\to\infty}{\longrightarrow} 1$. Moreover, this convergence is uniform in $u$. We claim that  $\varphi^u\left(\sum_{\substack{v\in \N_1 \\ v\neq u}}^\infty \frac{ R_j Q_j^v}{r_j^{l_j}s_j^{k_j}}\right)$ converges uniformly on $u$ to $0$. To prove this, we use the fact that $\sum_{\substack{v\in \N_1 \\ v\neq u}}^\infty Q_j^u$ is  is close to $\sum_{\substack{v\in \N_1 \\ v\neq u}}^\infty V_j^u$. Given any $x\in B_{\ell_1}$, recalling \eqref{eq-Vsubj} we have
    \begin{eqnarray*}
    \Bigg|\sum_{\substack{v\in \N_1 \\ v\neq u}}^\infty V_j^v(x)- \sum_{\substack{v\in \N_1 \\ v\neq u}}^\infty Q_j^v(x) \Bigg|&=& \Bigg|\sum_{\substack{v\in \N_1 \\ v\neq u}}^\infty Q_j^v(x-\tcm{z}{m_j})-\sum_{\substack{v\in \N_1 \\ v\neq u}}^\infty  Q_j^v(x)\Bigg| \\
    &\leq & 2^{k_j}L(k_j) \| \tcm{z}{m_j}\|.\
\end{eqnarray*}

Then, since $\varphi^u\left(\sum_{\substack{v\in \N_1 \\ v\neq u}}^\infty \frac{ R_j V_j^v}{r_j^{l_j}s_j^{k_j}}\right)=0$ (see the proof of \eqref{eqn:phi0env}), we have
\begin{eqnarray*}
   \Bigg|\varphi^u\left(\sum_{\substack{v\in \N_1 \\ v\neq u}}^\infty \frac{ R_j Q_j^v}{r_j^{l_j}s_j^{k_j}}\right)\Bigg| &=& 
   \Bigg|\varphi^u\left(\sum_{\substack{v\in \N_1 \\ v\neq u}}^\infty \frac{ R_j Q_j^v}{r_j^{l_j}s_j^{k_j}}\right) - \varphi^u\left(\sum_{\substack{v\in \N_1 \\ v\neq u}}^\infty \frac{ R_j V_j^v}{r_j^{l_j}s_j^{k_j}}\right)
   \Bigg| \\
   &=& \Bigg|\varphi^u\left(\frac{ R_j }{r_j^{l_j}s_j^{k_j}}\right)\varphi^u\left( \sum_{\substack{v\in \N_1 \\ v\neq u}}^\infty Q_j^u - V_j^u \right)\Bigg| \\
   &\leq& \left(\frac{\|\tc{z}{m_j}\|}{\|z\|}\right)^{l_j}  \left(\frac{r}{r_j}\right)^{l_j} \frac{2^{k_j}L(k_j)}{s_j^{k_j}}\|\tcm{z}{m_j}\|\underset{j\to\infty}{\longrightarrow} 0. \ 
\end{eqnarray*}

Therefore, for any $u \in \N_1$ we have $\varphi^u(S_j)$ converges (uniformly on $u$) to $1$. Since $\psi_1 \in  w^*-ac\{ \varphi^u : u\in \mathbb{N}_1\}$, this implies that $\psi_1\left( S_j\right)$ also converges to $1$.

On the other hand, for any $u\in \N_2$ we have
$$ \varphi^u(S_j) =\Bigg|\varphi^u\left(\sum_{v\in \N_1}^\infty \frac{ R_j Q_j^v}{r_j^{l_j}s_j^{k_j}}\right)\Bigg|= \Bigg|\varphi^u\left(\sum_{\substack{v\in \N_1 \\ v\neq u}}^\infty \frac{ R_j Q_j^v}{r_j^{l_j}s_j^{k_j}}\right)\Bigg|,$$
which converges uniformly to $0$, hence  $\psi_2\left( S_j\right)$ converges to $0$, as we wanted to see.
\end{proof}

\medskip

Recall that the fibers in $\M(\A_u(B_{\ell_1}))$  corresponding to points \( z \in S_{\ell_1} \) are singletons (and these are the only trivial fibers). Together with our previous results, this confines the question about the existence of  
$\varphi \in \M_{z''}$  with $\GP(\varphi)\not=\GP(\delta_{z''})$  to $z''$ belonging to \( S_{\ell_1''} \setminus S_{\ell_1} \). 
In \cite[Cor. 4.10]{DimLasMae}, a positive answer is given for those $z''$ which are real extreme points of the unit ball of $c_0^\perp$. Also, note that in \cite[Sect. 4]{DimLasMae}, the arguments used to prove  that the fibers over points in \( S_{\ell_1''} \setminus S_{\ell_1} \) are nontrivial differ depending on whether or not the $c_0^\perp$-component of the given element is an extreme point. We end this section by showing that the fiber over $z''$ intersects more than one Gleason part for any $z''\in S_{\ell_1''}$ with extremal $c_0^\perp$-component.   For any ball $B$, we denote by $\Ext(B)$ the set of real extreme points of $B$.

\begin{prop}\label{prop:enelborde}
 For any   $z''\in S_{\ell_1''}$ of the form $z''=z+ \omega$, with $z\in B_{\ell_1}$ and   $\omega \in \Ext(\|\omega\| \overline{B}_{c_0^\bot})$, there is $\varphi \in \M_{z''}$ such that $\GP(\varphi)\neq \GP(\delta_{z''})$.
\end{prop}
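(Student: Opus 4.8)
The plan is to follow the architecture of the proof of Proposition~\ref{prop:ele-1-morfismo-en-la-fibra}, but to account for the fact that now $z''\in S_{\ell_1''}$, so the shortcut $\GP(\delta_{z''})=\GP(\delta_0)$ is no longer available. Instead I will build a character $\varphi\in\M_{z''}$ and a sequence of norm‑one polynomials $P_j$ with $\varphi(P_j)\to 1$, and then compute the Aron--Berner extension directly to see that $\delta_{z''}(P_j)=\widetilde{P_j}(z'')\to 0$; this separates $\varphi$ from $\delta_{z''}$. First a reduction: write $z''=z+\omega$ with $z\in B_{\ell_1}$, $\omega\in B_{c_0^\perp}$ and $1=\|z''\|=\|z\|+\|\omega\|$. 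If $\omega=0$ then $z''\in S_{\ell_1}$ and $\M_{z''}=\{\delta_{z''}\}$, so the statement is vacuous; if $\|\omega\|=1$ (hence $z=0$) this is \cite[Cor.~4.10]{DimLasMae}. So I may assume $c:=\|\omega\|\in(0,1)$ and $r:=\|z\|=1-c\in(0,1)$. Recalling that $c_0^\perp$ is isometric to the space of regular Borel measures on $\beta\N\setminus\N$, the hypothesis $\omega\in\Ext(c\,\overline B_{c_0^\perp})$ forces $\omega=\lambda c\,\delta_t$ for some $\lambda$ with $|\lambda|=1$ (in fact $\lambda=\pm1$) and some free ultrafilter $t$, where $\delta_t(y)=\lim_{n\to t}y(n)$ for $y\in\ell_\infty$.

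The homomorphism will be $\varphi(f):=\lim_{n\to t}f(z+\lambda c\,e_n)$, a well-defined ultrafilter limit which is a character because ultrafilter limits are multiplicative; the points $z+\lambda c\,e_n$ lie in $\overline B_{\ell_1}$ since $\|z+\lambda c\,e_n\|\le\|z\|+c=1$ (here $z\in c_0$), and $y(z+\lambda c\,e_n)\to z(y)+\lambda c\,\delta_t(y)=z''(y)$ along $t$ for every $y\in\ell_\infty=(\ell_1)'$, so $\varphi\in\M_{z''}$. For the polynomials, exactly as in Proposition~\ref{prop:ele-1-morfismo-en-la-fibra} I will use Dirichlet's theorem to pick $(l_j),(k_j)\subset\N$ with $k_j\ge 2$, $l_j,k_j\to\infty$, $\frac{l_j}{l_j+k_j}\to r$ and the rate conditions of \eqref{eqn:r-s-lim-1} giving $(r/r_j)^{l_j}\to1$ and $(c/s_j)^{k_j}\to1$, where $r_j=\frac{l_j}{l_j+k_j}$, $s_j=\frac{k_j}{l_j+k_j}$; pick an increasing $(m_j)$ with $(\|\tc{z}{m_j}\|/\|z\|)^{l_j}\to1$ (possible since $\|\tc{z}{m_j}\|\to\|z\|>0$) and $\|\tcm{z}{m_j}\|\to0$; let $z_j'\in S_{\ell_\infty}$ be supported on $\{1,\dots,m_j\}$ with $z_j'(\tc{z}{m_j})=\|\tc{z}{m_j}\|$; and set
$$R_j(x)=z_j'(x)^{l_j},\qquad Q_j(x)=\sum_{i>m_j}e_i'(x)^{k_j},\qquad P_j=r_j^{-l_j}s_j^{-k_j}R_jQ_j.$$
Then $\|R_j\|=\|Q_j\|=1$ (the latter attained at any $e_i$, $i>m_j$), and since $R_j$ and $Q_j$ depend on disjoint coordinate blocks $A=\{1,\dots,m_j\}$ and $B=\{i>m_j\}$, Lemma~\ref{lem:Pinasco} gives $\|P_j\|=1$.

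For the first estimate, for $n>m_j$ one has $z_j'(z+\lambda c\,e_n)=\|\tc{z}{m_j}\|$ and $Q_j(z+\lambda c\,e_n)=\sum_{i>m_j,\,i\ne n}z(i)^{k_j}+(z(n)+\lambda c)^{k_j}$; letting $n\to t$ (so eventually $n>m_j$, and $\lim_{n\to t}z(n)=0$ as $z\in c_0$) yields $\varphi(R_j)=\|\tc{z}{m_j}\|^{l_j}$ and $\varphi(Q_j)=\sum_{i>m_j}z(i)^{k_j}+(\lambda c)^{k_j}$, so that, using $\big|\sum_{i>m_j}z(i)^{k_j}\big|\le\|\tcm{z}{m_j}\|^{k_j}$,
$$1\ \ge\ |\varphi(P_j)|\ \ge\ \Big(\tfrac{\|\tc{z}{m_j}\|}{r_j}\Big)^{l_j}\Big(\tfrac{c}{s_j}\Big)^{k_j}\Big(1-\big(\tfrac{\|\tcm{z}{m_j}\|}{c}\big)^{k_j}\Big)\ \underset{j\to\infty}{\longrightarrow}\ 1.$$
For the second estimate I compute the Aron--Berner extension directly. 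Since $R_j$ and $Q_j$ depend on disjoint blocks, $\widetilde{P_j}(z'')=r_j^{-l_j}s_j^{-k_j}\,\widetilde{R_j}(z''_A)\,\widetilde{Q_j}(z''_B)$ with $z''_A=\tc{z}{m_j}$ (the $c_0^\perp$-part of $z''$ is annihilated by the finitely supported $z_j'$, giving $\widetilde{R_j}(z''_A)=\|\tc{z}{m_j}\|^{l_j}$) and $z''_B=\tcm{z}{m_j}+\omega$ (since $B\in t$). The key point is that for $k\ge 2$ the Aron--Berner extension of a diagonal tail polynomial $\sum_{i>m}e_i'(x)^{k}$ "forgets" the $c_0^\perp$-directions: running the iterated-limit definition, the weight paired with the last variable is a product of $k-1\ge 1$ elements of $\ell_1$, hence lies in $\ell_1\subseteq c_0$, on which every element of $c_0^\perp$ vanishes; iterating, $\widetilde{Q_j}(\tcm{z}{m_j}+\omega)=\sum_{i>m_j}z(i)^{k_j}$. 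Therefore $\widetilde{P_j}(z'')=r_j^{-l_j}s_j^{-k_j}\|\tc{z}{m_j}\|^{l_j}\sum_{i>m_j}z(i)^{k_j}$ and
$$|\widetilde{P_j}(z'')|\ \le\ \Big(\tfrac{\|\tc{z}{m_j}\|}{r_j}\Big)^{l_j}\Big(\tfrac{\|\tcm{z}{m_j}\|}{s_j}\Big)^{k_j}\ \underset{j\to\infty}{\longrightarrow}\ 1\cdot 0=0,$$
using $s_j\to c>0$. Finally, choosing unimodular $\theta_j$ with $\theta_j\varphi(P_j)=|\varphi(P_j)|$ and setting $g_j=\theta_jP_j\in\overline B_{\A_u(B_{\ell_1})}$, we get $\varphi(g_j)\to 1$ while $|\delta_{z''}(g_j)|=|\widetilde{P_j}(z'')|\to 0$, so $\delta_{z''}(g_j)\not\to 1$; by the characterization of Gleason parts recalled after Lemma~\ref{lem:redes_cerca} (\cite[Thm.~3.9]{stout1971theory}), $\GP(\varphi)\ne\GP(\delta_{z''})$.

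The main obstacle is precisely the computation of $\widetilde{P_j}(z'')$: in Proposition~\ref{prop:ele-1-morfismo-en-la-fibra} the inner point $z''\in B_{\ell_1''}$ lets one avoid the canonical extension altogether by comparing with $\delta_0$, whereas here one must show that the Aron--Berner extension does not see the singular summand $\omega$, which is exactly why $\varphi(P_j)$ and $\delta_{z''}(P_j)$ end up at distance $1$ apart. Extremality of $\omega$ enters twice and both uses are essential: it is what allows the approximating net to be taken of the form $z+\lambda c\,e_n$, built from scalar multiples of single basis vectors, so that the degree‑$k_j$ tail polynomial $Q_j$ realizes the full mass $(\lambda c)^{k_j}$ along the net, while its Aron--Berner extension realizes only the negligible $\sum_{i>m_j}z(i)^{k_j}$; a non‑extremal $\omega$ would spread this mass over many coordinates and destroy the estimate $|\varphi(P_j)|\to 1$.
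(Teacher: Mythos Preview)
Your proof is correct and follows the same architecture as the paper's: construct $\varphi$ as a limit of evaluations at points of the form $z$ plus a unimodular multiple of a single canonical vector, set $P_j=r_j^{-l_j}s_j^{-k_j}R_jQ_j$ with $R_j$ a power of a norming functional and $Q_j$ a tail polynomial, and separate $\varphi$ from $\delta_{z''}$ by showing $|\varphi(P_j)|\to1$ while the Aron--Berner extension of $Q_j$ does not see the $c_0^\perp$-summand $\omega$, forcing $\widetilde{P_j}(z'')\to0$. The only noteworthy variation is your choice $Q_j(x)=\sum_{i>m_j}x(i)^{k_j}$ in place of the paper's $\bigl(\sum_{i>m_j}x(i)^2\bigr)^{k_j/2}$, which lets you dispose of $\widetilde{Q_j}(z'')$ via the clean structural observation that each partial linear functional arising in the iterated extension lies in $\ell_1\subset c_0$ (so $\omega$ annihilates it), rather than through the paper's explicit bilinear expansion of the degree-$2$ building block; this is a mild simplification (your parenthetical ``in fact $\lambda=\pm1$'' is not needed and not quite accurate---any unimodular $\lambda$ can occur---but you only ever use $|\lambda|=1$).
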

\begin{proof}
We write $r=\|z\|$ and $s=1-r$. 
    Since \(\omega \in \Ext(s\overline{B}_{c_0^\bot})\), by the description of these extreme points done in \cite[Lem. 4.3]{DimLasMae}, 
    we know that there is a net of canonical unit vectors $(e_{n_\alpha})_{\alpha \in \Lambda}$ such that \(\omega = w^*-\lim_{\alpha \in \Lambda} s_0 e_{n_{\alpha}}\), with $|s_0|=s$.  Passing to a subnet if necessary, we  define a homomorphism 
    \(\varphi =  w^*-\lim_\alpha \delta_{\tc{z}{n_\alpha-1} + s_0e_{n_\alpha}} \in \M_{z''}\). Our objective now is to construct a sequence of norm-one polynomials \(P_j\) such that \(|\varphi(P_j) |\xrightarrow[j\to\infty]{} 1\), while \(\delta_{z''}(P_j) = \widetilde{P}_j(z'') \xrightarrow[j\to\infty]{} 0\). 

Again, as in Proposition  \ref{prop:ele-p-no-racional}, we use Dirichlet's approximation theorem to obtain sequences \((k_j), (l_j) \subset \mathbb{N}\) such that $r_j=\frac{l_j}{l_j+k_j}$ and $s_j=\frac{k_j}{l_j+k_j}$ satisfy
\begin{equation}\label{eqn:r-s-lim-bisbis}
 \left(\frac{r}{r_j}\right)^{l_j} \underset{j\to\infty}{\longrightarrow} 1 \quad \text{ and } \quad  \left(\frac{s}{s_j}\right)^{k_j} \underset{j\to\infty}{\longrightarrow} 1.   
\end{equation}
Moreover, by multiplying both \(k_j\) and \(l_j\) by 2 if necessary, we may assume that \(k_j\) is even.
Next, choose a sequence \((m_j)_j \subset \mathbb{N}\) with \(m_j \geq j\) for all \(j\) fulfilling the two conditions
\begin{equation}\label{eqn:norm-lim-bisbis}
\left(\frac{\|\tc{z}{m_j}\|}{\|z\|}\right)^{l_j} \xrightarrow[j\to\infty]{} 1  \quad \text{ and }  \quad \frac{\left|\sum_{i>m_j} z(i)^{2} + s_0^{2}\right|^{\frac{k_j}{2}}}{s^{k_j}} \xrightarrow[j\to\infty]{} 1.
\end{equation}

Take \(z'_{m_j} = (z'_{m_j}(1), \ldots, z'_{m_j}(m_j), 0, 0, \ldots) \in S_{\ell_{\infty}}\) such that \(z'_{m_j}(\tc{z}{m_j}) = \|\tc{z}{m_j}\|\) and define norm-one polynomials \(R_j \in \mathcal{P}(^{l_j}\ell_1)\) by \(R_j(x) = (z'_{m_j}(x))^{l_j}\), and \(Q_j \in \mathcal{P}(^{k_j}\ell_1)\) by \(Q_j(x) = \left(\sum_{i > m_j} (e_i'(x))^2\right)^{\frac{k_j}{2}}\).  
Now, by Lemma \ref{lem:Pinasco}, the polynomial \(P_j \in \mathcal{P}(^{l_j+k_j}\ell_1)\) given by
\(
P_j(x) = r_j^{-l_j} s_j^{-k_j} R_j(x) Q_j(x)
\)
has norm one.

Since the canonical extension to the bidual is multiplicative, \(\widetilde{P}_j = r_j^{-l_j} s_j^{-k_j} \widetilde{R}_j \widetilde{Q}_j\). We begin by computing \(\widetilde{Q}_j(z'')\):
\[
\widetilde{Q}_j(z'') = Q_j(z) + \widetilde{Q}_j(w) + 2\widecheck{\widetilde{Q}}_j(z, w).
\]
It is straightforward to verify that the second term vanishes:
\[
\widetilde{Q}_j(w) = \lim_{\alpha_1} \lim_{\alpha_2} Q(s_0 e_{n_{\alpha_1}}, s_0 e_{n_{\alpha_2}}) = 0.
\]
With a bit more work, using that \(z \in \ell_1\), we can also  show that the third term is zero as well:
\[
\widecheck{\widetilde{Q}}_j(z,w) = \frac{\widetilde{\widecheck{Q}}_j(z,w) + \widetilde{\widecheck{Q}}_j(w,z)}{2} = \widetilde{\widecheck{Q}}_j(z,w) = \lim_{\alpha} Q(z, s_0 e_{n_\alpha}) = \lim_{\alpha} (s_0 z(n_\alpha))^{k_j/2} = 0.
\]
Thus, by \eqref{eqn:r-s-lim-bisbis} and \eqref{eqn:norm-lim-bisbis} we have
\begin{eqnarray*}  
|\widetilde{P}_j(z'')| &=& \frac{|\widetilde{R}_j(z'') Q_j(z)|}{r_j^{l_j} s_j^{k_j}}= \left(\frac{\|\tc{z}{m_j}\|}{r_j}\right)^{l_j} \frac{\left|\sum_{i>m_j} z(i)^{2} \right|^{\frac{k_j}{2}}}{ s_j^{k_j}}\\
&\leq&  \left(\frac{\|\tc{z}{m_j}\|}{\|z\|}\right)^{l_j} \left(\frac{r}{r_j}\right)^{l_j} \left(\frac{\|\tcm{z}{m_j}\|}{ s_j}\right)^{k_j}\xrightarrow[j \to \infty]{} 0.
\end{eqnarray*}

Now, for \(n_\alpha > m_j\), we compute:
\[
\delta_{\tc{z}{n_\alpha - 1} + s_0 e_{n_\alpha}}(R_j) = (z_{m_j}'(\tc{z}{m_j}))^{l_j} = \|\tc{z}{m_j}\|^{l_j},
\]
and
\[
\delta_{\tc{z}{n_\alpha - 1} + s_0 e_{n_\alpha}}(Q_j) = \left( \sum_{i=m_j+1}^{n_\alpha} z(i)^2 + s_0^2 \right)^{\frac{k_j}{2}}.
\]

Combining these expressions with \eqref{eqn:r-s-lim-bisbis} and \eqref{eqn:norm-lim-bisbis}, we obtain the desired result:
\begin{align*}
|\varphi(P_j)| &= \frac{\|\tc{z}{m_j}\|^{l_j}}{r_j^{l_j} s_j^{k_j}} \Bigg| \sum_{i > m_j} z(i)^2 + s_0^2 \Bigg|^{\frac{k_j}{2}} \\
&= \left( \frac{\|\tc{z}{m_j}\|}{\|z\|} \right)^{l_j} \left( \frac{r}{r_j} \right)^{l_j} \left( \frac{s}{s_j} \right)^{k_j} \frac{\left| \sum_{i > m_j} z(i)^2 + s_0^2 \right|^{\frac{k_j}{2}}}{s^{k_j}} \xrightarrow[j \to \infty]{} 1. \qedhere
\end{align*}

\end{proof}

We do not know if the previous result extends to  $z''=z+\omega$ with an arbitrary $\omega $. However, we can see that the conclusion of Proposition \ref{prop:enelborde} holds whenever  $\omega$ is a multiple of a convex combination of extreme points of $\overline{B}_{c_0^\bot}$ with disjoint support. For the sake of simplicity, we show this only for a particular $z''$. The general fact follows combining these ideas with those of Proposition \ref{prop:enelborde}.
\begin{exa}\label{ex-promedio}
    Consider  \( z \in B_{\ell_1} \), with \( \|z\| = \frac{1}{2} \) and \( \supp(z) \subset \{1, \ldots, m\} \) and take $\omega$ a weak-star limit of $\{ (e_{2n} + e_{2n+1})/4 \}$. Then, for $z''=z+\omega$,  there exists $\varphi \in \M_{z''}$ such that $\GP(\varphi)\neq \GP(\delta_{z''})$.
\end{exa}

To see this, note that we can  chose a net $\{n_\alpha\}_\alpha$ such that \( \omega = \frac{\omega_1 + \omega_2}{2} \) with
\[
\omega_1 = w^*-\lim_{\alpha} \frac{e_{2n_{\alpha}}}{2}, \quad \omega_2 = w^*-\lim_{\alpha} \frac{e_{2n_{\alpha}+1}}{2}.
\]

Now, eventually passing to a subnet, we can define
\[
\varphi = w^*-\lim_\alpha \delta_{z + \tfrac{1}{4}(e_{2n_\alpha} + e_{2n_\alpha+1})},
\]
and consider the polynomials
\[
R(x) = (z'(x))^2, \quad Q(x) = \sum_{i > m} \left( e_{2i}'(x) + e_{2i+1}'(x) \right)^2,
\]
where \( z' \in S_{\ell_\infty} \) is chosen so that \( z'(z) = \|z\| \) and \( \supp(z') \subset \{1, \ldots, m\} \).  
Define \( P(x) = 16\,R(x)\,Q(x) \)  and note that $\|P\|=1$. It is not hard to verify that for \( n_\alpha > m \),
\[
\delta_{z + \tfrac{1}{4}(e_{2n_\alpha} + e_{2n_\alpha+1})}(P)
= 16 R(z) Q\left( \tfrac{1}{4}(e_{2n_\alpha} + e_{2n_\alpha+1}) \right) = 1,
\]
implying \( \varphi(P) = 1 \).

On the other hand, we have
\[
\delta_{z''}(Q) = \widetilde{Q}(\omega) = \lim_{\alpha_1} \lim_{\alpha_2}\widecheck Q\left( \tfrac{e_{2n_{\alpha_1}} + e_{2n_{\alpha_1}+1}}{4}, \tfrac{e_{2n_{\alpha_2}} + e_{2n_{\alpha_2}+1}}{4} \right) = 0,
\]
which produces \( \delta_{z''}(P) = 16 \delta_{z''}(R) \delta_{z''}(Q) = 0 \), and thus \( \GP(\varphi) \neq \GP(\delta_{z''}) \).

\section*{Acknowledgments}

The authors would like to thank the anonymous referee for a careful reading of the manuscript and for valuable comments and suggestions that helped improve the paper. 

{The first-named author was partially supported by CONICET PIP 11220200102366CO  and UBACyT
20020220300242BA. The second and third-named authors were partially supported by CONICET PIP 11220200101609CO. }

\bibliography{biblio.bib}
\bibliographystyle{plain}
\end{document}